\DeclareMathAlphabet{\mathpzc}{OT1}{pzc}{m}{it}
\numberwithin{equation}{section}
\newtheorem{definition}{Definition}[section]
\newtheorem{theorem}[definition]{Theorem}
\newtheorem{proposition}[definition]{Proposition}
\newtheorem{lemma}[definition]{Lemma}
\newtheorem{corollary}[definition]{Corollary}
\newtheorem{remark}[definition]{Remark}
\theoremstyle{remark}
\newtheorem{example}[definition]{Example}
\def\Gw{\Omega}              
\newcommand{\diver}{\mathrm{div}\,}
\newcommand{\loc}{{\mathrm{loc}}}
\newcommand{\dm}{\,\mathrm{d}m}
\newcommand{\dmu}{\,\mathrm{d}\mu}
\newcommand{\core}{C_0^{\infty}(\Omega)}
\newcommand{\R}{{\mathbb R}}
\newcommand{\N}{{\mathbb N}}
\renewenvironment{proof}{{\bfseries Proof.}}{\hfill$\Box$}
\newcommand{\CM}{{\mathbb C}}
\newcommand{\NM}{{\mathbb N}}
\newcommand{\RM}{{\mathbb R}}
\newcommand{\Cc}{{\mathcal C}}
\newcommand{\Dd}{{\mathcal D}}
\newcommand{\Hh}{{\mathcal H}}
\newcommand{\ab}{{\mathbf a}}
\newcommand{\supp}{{\mathrm supp}}
\begin{document}
\title[Generalized eigenfunctions and eigenvalues]{Generalized eigenfunctions and eigenvalues: a unifying framework for Shnol-type theorems}

\author{Siegfried Beckus, Baptiste Devyver}

\address{Institut f\"ur Mathematik\\
Universit\"at Potsdam\\
Potsdam, Germany}
\email{beckus@uni-potsdam.de}

\address{Department of Mathematics\\
Technion - Israel Institute of Technology\\
Haifa, Israel}
\email{devyver@technion.ac.il}

\begin{abstract}
Let $H$ be a generalized Schr\"odinger operator on a domain of a non-compact connected Riemannian manifold, and a generalized eigenfunction $u$ for $H$: that is, $u$ satisfies the equation $Hu=\lambda u$ in the weak sense but is not necessarily in $L^2$. The problem is to find conditions on the growth of $u$, so that $\lambda$ belongs to the spectrum of $H$. We unify and generalize known results on this problem. In addition, a variety of examples is provided, illustrating the different nature of the growth conditions.\\[0.1cm]

\noindent  2000  \! {\em Mathematics  Subject  Classification.}
Primary  \! 35P05; Secondary  35B09, 35J10, 81Q10, 81Q35.\\[2mm]
\noindent {\em Keywords.} Shnol theorem, Caccioppoli inequality, Schr\"odinger operators, generalized eigenfunction, ground state.

\end{abstract}

%%%%%%%%%%%%%%%%%%%%%%%%%%%%%%%%%%%%%%%%%%%%%%%%%%%%%%%%%%%%%%%%%%%%

\maketitle

%%%%%%%%%%%%%%%%%%%%%%%%%%%%%%%%%%%%%%%%%%%%%%%%%%%%%%%%%%%%%%%%%%%%%%%%%%%%%%%%%%%%%
%%%%%%%%%%%%%%%%%%%%%%%%%%%%%%%%%%%%%%%%%%%%%%%%%%%%%%%%%%%%%%%%%%%%%%%%%%%%%%%%%%%%%
\section{Introduction and main result}
\label{Sec:Intro}
%%%%%%%%%%%%%%%%%%%%%%%%%%%%%%%%%%%%%%%%%%%%%%%%%%%%%%%%%%%%%%%%%%%%%%%%%%%%%%%%%%%%%
%%%%%%%%%%%%%%%%%%%%%%%%%%%%%%%%%%%%%%%%%%%%%%%%%%%%%%%%%%%%%%%%%%%%%%%%%%%%%%%%%%%%%

The paper deals with the following question: under which conditions does a generalized eigenvalue for a Schr\"odinger-type operator belong to the spectrum? Here, by ``generalized eigenvalue'' $\lambda$, we mean $\lambda\in \R$ so that there is weak solution to the equation $Hu=\lambda u$; the function $u$ is then called an ``generalized eigenfunction''. Since $u$ does not necessarily belong to $L^2$, it is not straightforward to determine whether $\lambda$ belongs to the spectrum of $H$. That this is indeed the case depends on further assumptions on the growth of the generalized eigenfunction (and typically the allowed growth depends on both $H$ and the domain). Statements giving conditions for $\lambda$ to belong to the spectrum are known by the name {\em Shnol-type theorems}, in recognition of an early work by Shnol \cite{Shn57}. There he proved that if $H=\Delta+V$ is a standard Schr\"odinger operator on $\R^d$ (whose potential $V$ satisfies certain technical conditions), and the generalized eigenfunction $u$ has at most polynomial growth, then $\lambda$ belongs to the spectrum of $H$. This celebrated result was independently rediscovered by Simon \cite{Sim81} for a more general class of potentials. In addition, Simon showed that almost every (w.r.t. the spectral measure) energy in the spectrum admits a generalized eigenfunction with at most polynomial growth. The latter result is based on an general method \cite{Bro54,CyconFroeseKirschSimon87,Shu92} for eigenfunction expansion. Remarkable generalizations of these results for subexponentially growing eigenfunctions have been proven in the setting of Dirichlet forms \cite{BoSt03,BoLeSt09,FrLeWi14}, quantum graphs \cite{Ku05} and graphs \cite{HaKe11}. %In \cite{BoLeSt09}, the polynomially growth was replaced by the subexponential growth of the generalized eigenfunction reflecting the growth of the underlying geometry.

\medskip

Recently, the following problem was raised in \cite[Conjecture~9.9]{DeFrPi14}: if instead of having subexponential growth, the generalized eigenfunction $u$ is bounded by a certain quantity, intrinsically defined by the operator $H$, can we also conclude that $\lambda$ is in the spectrum? More precisely, it was conjectured that $\lambda$ is in the spectrum, provided the generalized eigenfunction is bounded pointwise by (a multiple of) the Agmon ground state. This conjecture has been proven a few years later in \cite{BP17}. 

\medskip

Summing up, there are two sets of results on this problem:
\begin{itemize}
\item[(A)] The subexponential growth of the generalized eigenfunction implies that the associated eigenvalue belongs to the spectrum, c.f. \cite{Shn57,Sim81,Ku05,BoLeSt09,HaKe11}.
\item[(B)] The generalized eigenfunction being bounded pointwise by the (Agmon) ground state implies that the associated eigenvalue belongs to the spectrum, c.f. \cite{BP17}.
\end{itemize}

While (A) requires only $L^2$-estimates on the generalized eigenfunction, (B) requires pointwise estimates; however, one can sometimes prove that the required $L^2$ and pointwise estimates are equivalent, by means of ``mean-value-type inequalities'', see e.g. \cite{Sim81}. We also mention that both results (A) and (B) rely crucially on Caccioppoli-type estimates (see \cite{HeinonenKilpelainenMartio93,BiMo95}, and references therein for the unperturbed operator, and \cite{BoLeSt09} for the Dirichlet form setting).

\medskip

The aim of the present paper is to unify the two approaches (A) and (B) and put them in a common framework; we will also extend significantly the result (B) by allowing a more general growth on the generalized eigenfunction, still requiring pointwise estimates. As we shall see, the obtained generalization is also close to be optimal. In order to show that a generalized eigenvalue belongs to the spectrum, we will consider special Weyl sequences; these Weyl sequences will be built out of the generalized eigenfunction, and of a sequence of cut-off functions that have certain good properties, which will be called an {\em admissible cut-off sequence}. The concept of an admissible cut-off sequence will turn out to be the one unifying the results (A) and (B). As will be demonstrated in examples, our results can sometimes apply even if the generalized eigenfunction is exponentially growing. This is of interest, because there are very few Shnol-type results in the literature, that pertains to the case in which the generalized eigenfunction is not subexponentially growing. One such celebrated result is due to Brooks \cite{B81}: on a Riemannian cover of a compact manifold, consider the eigenfunction $u=1$ for the Laplacian; then, its associated eigenvalue $\lambda=0$ belongs to the spectrum, if and only if the deck transformation group of the covering is amenable. Since there exists amenable groups with exponential growth (e.g. the lamplighter group), this provides a simple example on which the results in (A) are not directly applicable. 

\medskip

{\bf Plan of the paper:} In Section~\ref{ssec-SettingResults}, a short introduction of the setting is provided and our main results are stated. In Section~\ref{Sec:Prel}, we review some key concepts from criticality theory, that will be needed later on. Section~\ref{Sec:Cacc} is devoted to some new Caccioppoli-type estimates, which are the key ingredients for the proof of the main result. In Section~\ref{Sec:PfMain}, we prove our main result. In Section~\ref{Sec:Crit}, we explain how to use our main result in order to recover (B); this requires building an admissible cut-off sequence whenever the underlying operator is critical, and it is achieved by using the so-called {\em Evans potential} for the operator. In Section~\ref{Sec:subexp}, we show briefly how (A) follows from our main result as well. Finally, we discuss some examples of applications of our main result in Section~\ref{Sec:Examples}.

\subsection{Setting and main results}
\label{ssec-SettingResults}
%%%%%%%%%%%%%%%%%%%%%%%%%%%%%%%%%%%%

For the purpose of this work, $\Omega$ is a domain in $\RM^d$ (or a domain in a non-compact $d$-dimensional connected Riemannian manifold). Fix a strictly positive measurable function $m$ on $\Omega$ satisfying that $m$ and $m^{-1}$ are bounded on any compact subset of $\Omega$. Define $dm:=m(x)dx$ where $dx$ is the Riemannian volume form on $\Omega$.

\medskip

We denote by $\mathit{End}(T\Omega)$ the bundle of endomorphisms of the tangent bundle $T\Omega$. The inner product and its induced norm on $T\Omega$ is denoted by $\langle\cdot,\cdot\rangle$ and $|\cdot|$. Throughout this work $A$ is a symmetric measurable section on $\Omega$ of $\mathit{End}(T\Omega)$ that is locally uniformly elliptic, that is for each $K\subseteq \Omega$ compact that there is a constant $\lambda_K\geq 1$ such that
\begin{equation}\label{Eq:A}
\frac{1}{\lambda_K} |\xi|^2 
	\leq \langle A(x)\xi,\xi\rangle
	\leq \lambda_K |\xi|^2
	\,,\qquad x\in K \text{ and } (x,\xi)\in T\Omega\,.
\end{equation}
%%%%%%%%%%%%%%%%%%
Let $L^p(\Omega,dm)$ be the associated $L^p$-space with $\Omega$. Furthermore, $L^p_{loc}(\Omega,dm)$ denotes the set of measurable $f:\Omega\to\CM$ such that $f|_K\in L^p(\Omega,dm)$ for each $K\subseteq \Omega$ compact. The set of compact, smooth functions on $\Omega$ is denoted by $\core\subseteq L^2(\Omega,dm)$. Throughout this work $\|\cdot\|_{2,m}$ denotes the $L^2$-norm on $L^2(\Omega,dm)$ and $\langle\cdot,\cdot\rangle_{2,m}$ is the corresponding inner product.

\medskip

Denote by $\nabla$ the gradient with respect to the Riemannian metric. Let $p>\frac{d}{2}$ and $V\in L^p_\loc(\Omega,dm)$ be real-valued, and $A$ be a symmetric measurable section on $\Omega$ of $\mathit{End}(T\Omega)$ satisfying \eqref{Eq:A}. Then the symmetric sesquilinear form $Q:\core\times\core\to\CM$ is defined by
%%%%%%%%%%%%%%%%%%
\begin{equation}\label{Eq:ForSchr}
Q(v,w)
	:= \int_\Omega \langle A\nabla u\,,\nabla v\rangle + V u\overline{v}  \dm\,.
\end{equation}
%%%%%%%%%%%%%%%%%%
Throughout this work, it is assumed that $Q$ is semibounded, namely $Q(v,v)\geq -c\|v\|_{2,m}^2$ for all $v\in\core$ and some $c\geq 0$. The quadratic form $Q$ is then closable, and we will consider its closure (also denoted $Q$ for simplicity), see also Remark~\ref{Rem:ClosFor}. Its domain is $\Dd(Q):=\overline{\core}^{\|\cdot\|_Q}$ where the $Q$-norm is defined by 
%%%%%%%%%%%%%%%%%%
$$\|v\|_Q
	:= \sqrt{Q(v,v)+(1+c)\|v\|_{2,m}^2}\,.
$$
%%%%%%%%%%%%%%%%%%
Thus, $\core$ is a core of $\Dd(Q)$.
There exists a unique associated self-adjoint operator $H$ associated with $Q$, which has the formal form
%%%%%%%%%%%%%%%%%%
\begin{equation}\label{Eq:Schro}
H=-\mathrm{div}(A\nabla\cdot)+V\,.
\end{equation}
%%%%%%%%%%%%%%%%%%
Here $-\mathrm{div}$ denotes the formal adjoint of the gradient with respect to the measure $m$. In order to shorten the notation, we use $\langle A\nabla v,\nabla v\rangle=:|\nabla v|_A^2$.

\medskip

\medskip

If $u\in W^{1,2}_{loc}(\Omega)$ and $v\in\core$, then $Q(u,v)$ is well-defined. With this at hand, $u\in W^{1,2}_{loc}(\Omega)$ is called {\em generalized eigenfunction of $H$ with eigenvalue $\lambda\in\RM$} if 
%%%%%%%%%%%%%%%%%%
$$
Q(u,v)=\lambda \int_\Omega u\overline{v}  \dm
\,,\qquad \text{ for all } v\in\core\,.
$$
%%%%%%%%%%%%%%%%%%
It follows from elliptic regularity (see \cite[Theorem 8.22]{GT}) that $u$ is locally H\"older continuous. The aim of this work is to find those growth conditions on a generalized eigenfunction such that its associated eigenvalue belongs to the spectrum of $H$.

%%%%%%%%%%%%%%%%%%
\begin{definition}\label{Def:AdmSeq}
Let $\varphi \in C(\Omega)\cap W_{loc}^{1,2}(\Omega)$ be a positive function. A sequence $\{\varphi_n\}_{n\in\mathbb{N}}$ of  functions in $C_0(\Omega)\cap \mathcal{D}(Q)$, is called an {\em admissible cut-off sequence for $(H,\varphi)$} if the following conditions hold:

\begin{itemize}

\item[(i)] For every $n\in\mathbb{N}$, $0\leq \varphi_n\leq \varphi$. 

%\item[(ii)] $\Omega=\bigcup_{n\in\mathbb{N}} \supp(\varphi_n)$. 

\item[(ii)] For every $n\in\mathbb{N}$, $\varphi_{n+1}(x)=\varphi(x)$, for all $x\in \supp(\varphi_n)$. 

\item[(iii)] There is a constant $C>0$ satisfying the following {\em (weak Hardy inequality)}

\begin{equation}\label{Eq:WH}\tag{wH}
\int_\Omega |v|^2\left |\nabla \left(\frac{\varphi_n}{\varphi}\right)\right|_A^2 \dm  \leq C\|v\|_Q^2
	\,,\qquad v\in \Dd(Q)\,.
\end{equation}
\end{itemize}
\end{definition}

%%%%%%%%%%%%%%%%%%

%%%%%%%%%%%%%%%%%%
\begin{example}

The two most prominent examples of an admissible cut-off sequence are built up using either distance functions for the so-called intrinsic metric (see Section \ref{Sec:subexp}), or a special null-sequence (see Section \ref{Sec:Crit}).

\end{example}
%%%%%%%%%%%%%%%%%%
Since the spectrum of $H$ is only bounded from below but not necessarily non-negative, it will be sometimes useful to shift it. To this purpose, we will sometimes consider the operator $H+W$, where $W:\Omega\to \R$ is a bounded potential, such that $H+W$ is non-negative. By the Allegretto-Piepenbrink theorem (see e.g. \cite{Agm83}), there then exists a positive function $h$ such that

$$(H+W)h=0$$
in the weak sense (equivalently, $h$ is a generalized positive eigenfunction of $H+W$ with eigenvalue zero). By elliptic regularity, the function $h$ is locally H\"older continuous.

%%%%%%%%%%%%%%%%%%
\begin{theorem}\label{thm:main}
Let $(\Omega,\dm)$ be a weighted manifold and $H$ be a Schr\"odinger-type operator on $(\Omega,\dm)$ of the form \eqref{Eq:Schro}. Let $\varphi \in C(\Omega)\cap W_{loc}^{1,2}(\Omega)$, and $\{\varphi_n\}_{n\in\mathbb{N}}$ be an admissible cut-off sequence for $(H,\varphi)$. Let $u\in W^{1,2}_{\loc}(\Omega)$ be a generalized eigenfunction of the operator $H$, associated with the eigenvalue $\lambda\in \R$. Let $A_n$ be the support of $\varphi_{n+1}(\varphi-\varphi_{n-1})$. Suppose that one of the following growth conditions on $u$ holds:

\begin{itemize}

\item[(i)] there is a bounded potential $W:\Omega\to \R$ such that $(H+W)\varphi=0$ in the weak sense, and

$$\liminf_{n\to\infty} 
	\frac{\max_{A_n}\left|\frac{u}{\varphi}\right|}{\|\varphi_n\,\frac{u}{\varphi}\|_{2,m}}
		\;\left( \int_\Omega \left|\nabla \left(\frac{\varphi_n}{\varphi}\right)\right|^2_A\, \varphi^2\dm \right)^{1/2}
			= 0\,,
$$

\item[(ii)] $\varphi$ is constant, and $$\liminf_{n\to\infty}
	\frac{\|u\|_{L^2(A_n,\dm)}
		+ \sqrt{\int_\Omega |u|^2(|\nabla \varphi_{n-1}|^2_A
			+|\nabla \varphi_{n}|_A^2
			+|\nabla \varphi_{n+1}|_A^2)\,\dm}}{\|\varphi_n u\|_{2,m}}
		=0.
$$

\end{itemize}
Then, $\lambda\in\sigma(H)$.
\end{theorem}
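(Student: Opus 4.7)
The plan is to construct a Weyl-type sequence $\{\psi_n\}\subset\Dd(Q)$ witnessing $\lambda\in\sigma(H)$. We set $\psi_n:=\varphi_n\cdot(u/\varphi)$ in case (i) and $\psi_n:=\varphi_n u$ in case (ii); since $\varphi_n\in C_0(\Omega)\cap\Dd(Q)$ and $u/\varphi$ is locally H\"older continuous by elliptic regularity, $\psi_n$ is a well-defined element of $\Dd(Q)$ with compact support. By the form version of Weyl's criterion for semibounded self-adjoint operators, it suffices to show that
\[
\frac{1}{\|\psi_n\|_{2,m}}\sup_{v\in\Dd(Q),\,\|v\|_Q\leq 1}\bigl|Q(\psi_n,v)-\lambda\langle\psi_n,v\rangle_{2,m}\bigr|\;\longrightarrow\;0
\]
along a subsequence.

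Set $\chi_n:=\varphi_n/\varphi$ in case (i) and $\chi_n:=\varphi_n$ in case (ii). Expanding $\nabla\psi_n$ by Leibniz and testing the eigenvalue equation $Hu=\lambda u$ against $\chi_n v$ yields the commutator identity
\[
Q(\psi_n,v)-\lambda\langle\psi_n,v\rangle_{2,m}=\int_\Omega u\,\langle A\nabla\chi_n,\nabla\overline{v}\rangle\dm-\int_\Omega\overline{v}\,\langle A\nabla u,\nabla\chi_n\rangle\dm.
\]
By Definition~\ref{Def:AdmSeq}(ii), $\chi_n\equiv 1$ on $\supp(\varphi_{n-1})$ and $\chi_n\equiv 0$ off $\supp(\varphi_n)$, so $\nabla\chi_n$ is supported in the annular region $A_n$. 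Cauchy--Schwarz in the $A$-inner product bounds the first integral by $(\max_{A_n}|u/\varphi|)(\int\varphi^2|\nabla\chi_n|^2_A\dm)^{1/2}\|v\|_Q$ in case (i), and by $(\int u^2|\nabla\varphi_n|^2_A\dm)^{1/2}\|v\|_Q$ in case (ii); these already reproduce one of the terms in the numerator of the respective hypothesis.

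For the second integral, Cauchy--Schwarz gives
\[
\Bigl|\int_\Omega\overline{v}\,\langle A\nabla u,\nabla\chi_n\rangle\dm\Bigr|\leq\Bigl(\int_\Omega|v|^2|\nabla\chi_n|^2_A\dm\Bigr)^{\!1/2}\Bigl(\int_{A_n}|\nabla u|^2_A\dm\Bigr)^{\!1/2},
\]
and the first factor is controlled by $\sqrt{C}\|v\|_Q$ via the weak Hardy inequality \eqref{Eq:WH}. The remaining gradient integral is handled using the Caccioppoli-type estimates of Section~\ref{Sec:Cacc}. In case (i), a ground-state-adapted Caccioppoli for $u/\varphi$ (based on $(H+W)\varphi=0$) with a cut-off built from $\chi_{n-1},\chi_{n+1}$ bounds $\int_{A_n}|\nabla u|^2_A\dm$ by a constant multiple of $(\max_{A_n}|u/\varphi|)^2\int_\Omega\varphi^2|\nabla\chi_n|^2_A\dm$. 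In case (ii), a standard Caccioppoli with a cut-off built from $\varphi_{n-1},\varphi_n,\varphi_{n+1}$ bounds $\int_{A_n}|\nabla u|^2_A\dm$ by $C\|u\|_{L^2(A_n,\dm)}^2+C\int_\Omega|u|^2(|\nabla\varphi_{n-1}|^2_A+|\nabla\varphi_n|^2_A+|\nabla\varphi_{n+1}|^2_A)\dm$, reproducing the remaining terms in the numerator of hypothesis (ii).

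Combining the two bounds and dividing by $\|\psi_n\|_{2,m}$, hypothesis (i) (resp.\ (ii)) forces the supremum above to vanish along a subsequence, so $\lambda\in\sigma(H)$. The main obstacle is the Caccioppoli step in case (i): it must convert the $L^2$-gradient of $u$ on $A_n$ into a \emph{pointwise} bound of $u/\varphi$ on $A_n$ without losing the weight $\varphi^2|\nabla\chi_n|^2_A$. This is only possible because the admissibility conditions (ii) and (iii) of Definition~\ref{Def:AdmSeq} provide exactly the right buffer (the identity $\varphi_{n+1}\equiv\varphi$ on $\supp(\varphi_n)$ pins the cut-off to $\varphi$ on the ``interior'' of $A_n$) and gradient control (the weak Hardy inequality) for the ground-state Caccioppoli to close.
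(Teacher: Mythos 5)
Your outline identifies the right skeleton (Weyl sequence $\psi_n=\varphi_n(u/\varphi)$, a Leibniz/commutator identity, weak Hardy, and a Caccioppoli estimate), and the commutator identity you write is correct. However, there are two genuine gaps, and both stem from the fact that you never perform the \emph{ground state transform}, which is the central reduction in the paper's proof (it passes from $Q$ to the form $\ab(v,w)=\int\langle A\nabla v,\nabla w\rangle\,d\mu$, $\mu=h^2\,dm$, with a \emph{bounded} potential remnant $W$; see Proposition~\ref{Prop:GrouStaTransf} and the proofs of Lemmas~\ref{Lem:Main(i)}, \ref{Lem:Main(ii)}).

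First gap: you bound the first integral by $(\ldots)\cdot\|v\|_Q$, which tacitly uses $\bigl(\int_\Omega |\nabla v|_A^2\,dm\bigr)^{1/2}\lesssim\|v\|_Q$. This inequality is \emph{not} available under the standing hypotheses: $V\in L^p_{\loc}$ can have an unbounded negative part (think of a critical Hardy potential), and then the Dirichlet energy $\int|\nabla v|_A^2\,dm$ is not controlled by $\|v\|_Q^2$ on $\Dd(Q)$. The ground state transform is exactly what fixes this: after transforming, one works with $\ab$ and a bounded potential $W$, so $\ab(v,v)\le\|v\|_q^2$ (see the inequality following \eqref{Eq:a}), and the Cauchy--Schwarz step in Lemma~\ref{Lem:Main(i)} is applied in the $\mu$-picture with $\ab(v,v)^{1/2}$, not $\|v\|_Q$. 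Note also that the decomposition of the commutator into two integrals is \emph{not} the same in the $m$- and $\mu$-pictures (they differ by a term involving $\nabla\varphi$), so you cannot simply relabel.

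Second gap (case (i)): you pair $|v|^2$ with $|\nabla\chi_n|_A^2$ in the second integral, arriving at a factor $\bigl(\int_{A_n}|\nabla u|_A^2\,dm\bigr)^{1/2}$, and then invoke a ``ground-state-adapted Caccioppoli'' to bound this by $\max_{A_n}|u/\varphi|\cdot\bigl(\int\varphi^2|\nabla\chi_n|_A^2\,dm\bigr)^{1/2}$. No such estimate is proved in the paper, and it does not follow from Propositions~\ref{Prop:PointCaccio} or~\ref{Prop:L2Caccio}. Writing $u=\varphi\cdot(u/\varphi)$ gives $\int_{A_n}|\nabla u|_A^2\,dm\lesssim \int_{A_n}|u/\varphi|^2|\nabla\varphi|_A^2\,dm + \int_{A_n}|\nabla(u/\varphi)|_A^2\varphi^2\,dm$, and the first term carries an uncontrolled $\int_{A_n}|\nabla\varphi|_A^2\,dm$. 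The paper avoids this by using the \emph{other} Cauchy--Schwarz pairing in the $\mu$-picture: $|v'|^2$ stays with $|\nabla u'|_A^2$, so the relevant quantity is $\int |\varphi_{n+1}(1-\varphi_{n-1})|^2|v'|^2|\nabla u'|_A^2\,d\mu$, which Proposition~\ref{Prop:PointCaccio} bounds by $\max_{A_n}|u'|^2$ \emph{uniformly} over $\|v'\|_q\le 1$; the separate factor $\ab(\varphi_n',\varphi_n')^{1/2}$ then reproduces the hypothesis. In short: the pairing matters, and the paper's Caccioppoli inequality is designed for the pairing that keeps $|v|^2$ next to $|\nabla u|_A^2$, not the one you use.

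In case (ii) your split (weak Hardy for the $|v|^2|\nabla\varphi_n|_A^2$ term, Caccioppoli for $\int_{\supp\nabla\varphi_n}|\nabla u|_A^2$) is conceptually closer to what is possible, but it is still carried out in the untransformed picture and so inherits the first gap; moreover, Proposition~\ref{Prop:L2Caccio} gives $\bigl(\int|\varphi_{n+1}(1-\varphi_{n-1})|^2|\nabla u|_A^2\,d\mu\bigr)^{1/2}\lesssim\|u\|_{L^2(A_n,\mu)}+(\ldots)^{1/2}$ rather than the squared bound you quote (a power mismatch). The overall conclusion is the same: the proposal has the right shape, but the ground state transform (and the $\mu$-picture Cauchy--Schwarz pairing) are not optional conveniences --- they are precisely what make the two problematic factors controllable, and without them the argument does not close.
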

%%%%%%%%%%%%%%%%%%

%%%%%%%%%%%%%%%%%%
\begin{remark}
{\em
(a) Theorem~\ref{thm:main} unifies and generalizes the results of (A) and (B). To be more precise, (A) is recovered from (ii) by requiring that $|\nabla \varphi_n|$ is bounded, see Section \ref{Sec:subexp}. Furthermore, (B) is a special case of condition (i). The requirements on $\{\varphi_n\}_{n\in \N}$ are not restrictive for critical operators. More precisely, a special null sequence is constructed using the so-called Evans potential, see Section \ref{Sec:Crit}. 

\vspace{.1cm}

(b) The conditions (i) and (ii) in Theorem~\ref{thm:main} are different, as shown in Examples \ref{Ex:parabolic} and \ref{Ex:i-ii}.

\vspace{.1cm}

(c) Even though (A) can be recovered by Theorem~\ref{thm:main}~(ii), the present formulation is more general. 

\vspace{.1cm}

(d) In all fairness, our proof in the case where assumption $(ii)$ is satisfied in Theorem~\ref{thm:main}, follows by arguments which are very similar to those developped in \cite{BoLeSt09}.
}
\end{remark}
%%%%%%%%%%%%%%%%%%

%%%%%%%%%%%%%%%%%%
\begin{remark}
\label{Rem:ResOpt}
{\em
The proof of Theorem~\ref{thm:main} consists of two ingredients: first, using a well-known trick called the {\em ground state transform} (or sometimes, $h$-transform or Doob transform), one reduces the proof to the case $H$ is non-negative, and the positive function $\varphi$ is equal to $1$. Then, under these assumptions, the proof consists in showing that (a subsequence of) the sequence of compactly supported functions $\{\psi_n\}_{n\in\mathbb{N}}$, $\psi_n=\frac{\varphi_nu}{\|\varphi_nu\|_{2,m}}$ is a generalized Weyl sequence for $H$, in the sense that
%%%%%%%%%%%%%%%%%%
$$
\lim_{n\to\infty}\sup_{\|v\|_q\leq 1}\Big| Q(\psi_n,v)-\lambda \langle\psi_n,v\rangle_{2,m}\Big|=0\,,
$$
%%%%%%%%%%%%%%%%%%
c.f. Proposition~\ref{Prop:CritWeyl}. More precisely, it follows from the generalized Weyl criterion that the existence of a sequence $\{\psi_n\}_{n\in\mathbb{N}}$ in $\Dd(Q)$ satisfying the above criterion is equivalent to $\lambda$ being in the spectrum of $H$. 
}
\end{remark}

It is important to notice that in the case $\varphi=1$, the conditions (i) and (ii) in Theorem \ref{thm:main} are close to being necessary, for (a subsequence of) the sequence $\{\psi_n\}_{n\in\mathbb{N}}$, $\psi_n=\frac{\varphi_nu}{\|\varphi_nu\|_{2,m}}$ to be a generalized Weyl sequence for $H$. Indeed, since $\|\psi_n\|_{2,m}=1$, a duality argument implies that if 

$$
\lim_{n\to\infty}\sup_{\|v\|_q\leq 1}\Big| Q(\psi_n,v)-\lambda \langle\psi_n,v\rangle_{2,m}\Big|=0\,,
$$
then actually $\|\psi_n\|_Q\leq C$ for some constant $C>0$ independent of $n$. Hence, by taking $v=C^{-1}\psi_n$, we see that
%%%%%%%%%%%%%%%%%%
$$
\lim_{n\to\infty}\frac{1}{\|\varphi_nu\|_{2,m}^2}\Big| Q(\varphi_nu,\varphi_nu)-\lambda\langle \varphi_nu,\varphi_nu\rangle_{2,m} \Big|=0,
$$
%%%%%%%%%%%%%%%%%%
is a necessary condition for $\{\psi_n\}_{n\in\mathbb{N}}$ to be a generalized Weyl sequence for $H$. A standard integration by parts argument, using that $Hu=\lambda u$, shows that
%%%%%%%%%%%%%%%%%%
$$Q(\varphi_nu,\varphi_nu)-\lambda\langle \varphi_nu,\varphi_nu\rangle_{2,m}
	= \int_\Omega |u|^2|\nabla \varphi_n|_A^2 \dm\,.
$$
%%%%%%%%%%%%%%%%%%
Hence, if $\{\psi_n\}_{n\in\mathbb{N}}$, $\psi_n=\frac{\varphi_nu}{\|\varphi_nu\|_{2,m}}$ is a generalized Weyl sequence for $H$, then necessarily
%%%%%%%%%%%%%%%%%%
\begin{equation}\label{Eq:gen_Weyl}
\lim_{n\to\infty}\frac{\int_\Omega |u|^2|\nabla \varphi_n|_A^2\dm}{\|\varphi_nu\|_{2,m}^2}=0.
\end{equation}
Note that if $u\equiv1$, \eqref{Eq:gen_Weyl} is just the well-known characterization of the (first) eigenvalue $\lambda=0$ in terms of Rayleigh quotient.
%%%%%%%%%%%%%%%%%%
The conditions (i) with $\varphi\equiv 1$, as well as (ii) in Theorem~\ref{thm:main} obviously imply \eqref{Eq:gen_Weyl}. In particular we see that condition (i) in Theorem \ref{thm:main} is close to being necessary for $\lambda$ to belong to the spectrum of $H$. More precisely, one has the following result:

\begin{corollary}\label{Harnack}
Let $(\Omega,\dm)$ be a weighted manifold. Let $H$ be a Schr\"odinger-type operator on $(\Omega,\dm)$ of the form \eqref{Eq:Schro} with $V\in L^\infty(\Omega)$. Let $\{\varphi_n\}_{n\in\mathbb{N}}$ be an admissible cut-off sequence for $(H,1)$. Let $u\in W^{1,2}_{\loc}(\Omega)$ be a generalized eigenfunction of the operator $H$, associated with the eigenvalue $\lambda\in \R$. Let $A_n$ be the support of $\varphi_{n+1}(\varphi-\varphi_{n-1})$. Assume that the function $|u|$ satisfies a uniform Harnack inequality on the sets $A_n$: there is a constant $C>0$, such that for every $n\in\mathbb{N}$,

$$\sup_{A_n}|u|\leq C\inf_{A_n}|u|.$$
Then, the sequence $\{\psi_n\}_{n\in\mathbb{N}}$, $\psi_n=\frac{\varphi_nu}{\|\varphi_nu\|_{2,m}}$ is a generalized Weyl sequence for $H$ associated with the eigenvalue $\lambda$, {\em if and only if}

$$\liminf_{n\to\infty}\frac{\int_\Omega |u|^2|\nabla \varphi_n|_A^2\dm}{\|\varphi_nu\|_{2,m}^2}=0.$$
\end{corollary}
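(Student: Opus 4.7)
The ``only if'' direction is essentially contained in the discussion just before the statement: if $\{\psi_n\}$ is a generalized Weyl sequence at $\lambda$, the integration-by-parts identity $Q(\varphi_n u,\varphi_n u)-\lambda\|\varphi_n u\|_{2,m}^2=\int_\Omega |u|^2 |\nabla \varphi_n|_A^2\,\dm$ combined with~\eqref{Eq:gen_Weyl} forces this ratio, hence its liminf, to vanish; the Harnack hypothesis plays no role here.

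For the converse, my plan is to extract a subsequence (still indexed by $n$) along which $\int_\Omega |u|^2 |\nabla \varphi_n|_A^2\,\dm/\|\varphi_n u\|_{2,m}^2 \to 0$, and then verify hypothesis (i) of Theorem~\ref{thm:main} along this subsequence with the choices $\varphi\equiv 1$ and $W=-V$. The latter is a bounded potential by the hypothesis $V\in L^\infty(\Omega)$ and satisfies $(H+W)\cdot 1=V+W=0$ in the weak sense, so these choices are legitimate. Theorem~\ref{thm:main} will then supply a (sub-)sequence of $\{\psi_n\}$ which is a generalized Weyl sequence for $H$ at $\lambda$, as explained in Remark~\ref{Rem:ResOpt}.

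The decisive geometric input will be the inclusion $\{\nabla \varphi_n\neq 0\}\subseteq A_n$ up to a null set. This should follow from the standard Sobolev fact that $\nabla f=0$ almost everywhere on any level set of $f\in W^{1,2}_\loc$: applied to $\varphi_n$ at the levels $c=0$ and $c=1$, it reduces the claim to showing $\{0<\varphi_n<1\}\subseteq A_n$. If $0<\varphi_n(x)<1$, then $x\in\supp\varphi_n$, so Definition~\ref{Def:AdmSeq}(ii) gives $\varphi_{n+1}(x)=\varphi(x)=1$; meanwhile $\varphi_n(x)<1$, combined with the same definition applied to the pair $(n-1,n)$ (which yields $\varphi_n=1$ on $\supp\varphi_{n-1}$), forces $\varphi_{n-1}(x)=0$. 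Hence $\varphi_{n+1}(1-\varphi_{n-1})(x)=1$, so $x\in A_n$. With this inclusion in hand, the uniform Harnack inequality will produce
$$
(\max_{A_n}|u|)^2\int_\Omega |\nabla\varphi_n|_A^2\,\dm = (\max_{A_n}|u|)^2 \int_{A_n}|\nabla\varphi_n|_A^2\,\dm \leq C^2\int_\Omega |u|^2|\nabla\varphi_n|_A^2\,\dm,
$$
and dividing by $\|\varphi_n u\|_{2,m}^2$, taking square roots and passing to the liminf will verify hypothesis (i) of Theorem~\ref{thm:main}. The main technical point I foresee is precisely this support inclusion, where the full admissibility structure of $\{\varphi_n\}$ is exploited; the Harnack step and the final application of Theorem~\ref{thm:main} are then routine.
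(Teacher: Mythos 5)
The paper states Corollary~\ref{Harnack} without giving a proof, so there is nothing to compare you against. Your argument is, in any case, correct.

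The ``only if'' direction is indeed contained verbatim in the duality discussion that precedes the statement: a generalized Weyl sequence forces $\|\psi_n\|_Q$ to be bounded, hence $Q(\varphi_nu,\varphi_nu)-\lambda\|\varphi_nu\|_{2,m}^2=\int_\Omega|u|^2|\nabla\varphi_n|_A^2\,\dm$ must be $o(\|\varphi_nu\|_{2,m}^2)$, and the Harnack hypothesis is unused, exactly as you say.

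For the ``if'' direction, the choices $\varphi\equiv 1$ and $W=-V$ are the right ones: $W$ is bounded because $V\in L^\infty(\Omega)$, and $(H+W)\cdot 1 = V+W=0$ weakly. Your verification of the growth hypothesis in Theorem~\ref{thm:main}(i) then reduces to the support inclusion $\{\nabla\varphi_n\neq 0\}\subseteq A_n$ (a.e.), which you derive from Definition~\ref{Def:AdmSeq}(ii) combined with the Sobolev fact that $\nabla\varphi_n=0$ a.e.\ on the level sets $\{\varphi_n=0\}$ and $\{\varphi_n=1\}$. This is correct, but note that a sharper version of exactly this statement is already Lemma~\ref{Lem:AdmSeq}(a) in the paper, so you could simply cite it. The Harnack step, giving $(\max_{A_n}|u|)^2\int_\Omega|\nabla\varphi_n|_A^2\dm\leq C^2\int_\Omega|u|^2|\nabla\varphi_n|_A^2\dm$, is fine (when $\inf_{A_n}|u|=0$ both sides are $0$, so there is no degenerate case). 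One minor observation worth making explicit: Theorem~\ref{thm:main} as stated only concludes $\lambda\in\sigma(H)$, so you are implicitly appealing to the internal structure of its proof (Lemma~\ref{Lem:Main(i)}, with $h\equiv 1$, so that $T_h$ is the identity, $\mu=m$, and the Weyl sequence built there is literally $\psi_n$); your reference to Remark~\ref{Rem:ResOpt} covers this, and since the hypothesis is a $\liminf$ condition, what is really obtained is the $\liminf$ form of the Weyl criterion (Proposition~\ref{Prop:CritWeyl}(ii)), i.e.\ a subsequence of $\{\psi_n\}$ is a generalized Weyl sequence in the strict ``$\lim$'' sense of Remark~\ref{Rem:ResOpt}. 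This $\liminf/\lim$ ambiguity is already present in the paper's own formulation of the corollary, so you are being appropriately cautious in saying ``a (sub-)sequence''.
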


%%%%%%%%%%%%%%%%%%%%%%%%%%%%%%%%%%%%%%%%%%%%%%%%%%%%%%%%%%%%%%%%%%%%%%%%%%%%%%%%%%%%%
%%%%%%%%%%%%%%%%%%%%%%%%%%%%%%%%%%%%%%%%%%%%%%%%%%%%%%%%%%%%%%%%%%%%%%%%%%%%%%%%%%%%%
\section{Preliminaries}
\label{Sec:Prel}
%%%%%%%%%%%%%%%%%%%%%%%%%%%%%%%%%%%%%%%%%%%%%%%%%%%%%%%%%%%%%%%%%%%%%%%%%%%%%%%%%%%%%
%%%%%%%%%%%%%%%%%%%%%%%%%%%%%%%%%%%%%%%%%%%%%%%%%%%%%%%%%%%%%%%%%%%%%%%%%%%%%%%%%%%%%

%%%%%%%%%%%%%%%%%%%%%%%%%%%%%%%%%%%%%%%%%%%%%%%%%%%%%%%%%%%%%%%%%%%%%%%%%%%%%%%%%%%%%
\subsection{Forms and Weyl sequences}
\label{Ssec:FormWeyl}
%%%%%%%%%%%%%%%%%%%%%%%%%%%%%%%%%%%%%%%%%%%%%%%%%%%%%%%%%%%%%%%%%%%%%%%%%%%%%%%%%%%%%

The main idea for proving a Shnol-type theorem is to construct a Weyl-sequence for the corresponding operator. The considered operators are defined via a form. In light of this, it is convenient to work with a Weyl-criteria for forms and not for the operators, which is presented now.

\medskip

A map $Q:\Dd(Q)\times \Dd(Q)\to\CM$ is called a {\em (sesquilinear) form} defined on a linear subspace $\Dd(Q)$ of a (complex) Hilbert space $\Hh$ if $Q$ is linear in the first component and complex linear in the second. The inner product on $\Hh$ is denoted by $(\cdot,\cdot)$ and $\|\cdot\|$ denotes its induced norm. If $Q(v,w)=\overline{Q(w,v)}$ holds for all $v,w\in\Dd(Q)$, then $Q$ is called symmetric. Here $\overline{z}$ denotes the complex conjugate of the complex number $z\in\CM$. Throughout this work it is assumed that the symmetric form $Q$ is {\em semibounded}, i.e. there is a constant $c\in\RM$ such that $Q(v,v)\geq c \|v\|^2$ for all $v\in\Dd(Q)$. Following \cite{Stollmann01}, $\|\cdot\|_Q:\Dd(Q)\to[0,\infty)$ defined by 
%%%%%%%%%%%%%%%%%%
$$\|v\|_Q
	:= \sqrt{Q(v,v) + (1-c)\|v\|^2} 
	\,,\qquad v\in\Dd(Q).
$$ 
%%%%%%%%%%%%%%%%%%
It is a norm on $\Dd(Q)$, satisfying the parallelogram law, hence has an associated inner product. The form $Q$ is called {\em closed} if $(\Dd(Q),\|\cdot\|_Q)$ is a Hilbert space. 

%%%%%%%%%%%%%%%%%%
\begin{remark}\label{Rem:ClosFor}
{\em Starting from a symmetric, semibounded form $Q_0$ defined on $\Dd(Q_0)\subseteq \Hh$, a closed form is defined as follows \cite{Stollmann01}: Define
%%%%%%%%%%%%%%%%%%
\begin{align*}
\Dd(Q)
	:= &\big\{ u\in \Hh \,:\, 
			\exists \{u_n\}\subset\Dd(Q_0) \text{ s.t. } u_n\to u \text{ in } \Hh 
			\text{ and } Q_0(u_n-u_m)\to 0 \text{ if } n,m\to\infty
		\big\}\,,\\
Q(u) := &\lim_{n\to\infty} Q_0(u_n)\,,
\end{align*}
%%%%%%%%%%%%%%%%%%
which is a well-defined, closed form. Throughout this work various symmetric, semibounded, closed forms are defined in this way.
}
\end{remark}
%%%%%%%%%%%%%%%%%%

Each closed, symmetric, semibounded sesquilinear form $Q$ admits a unique self-adjoint operator $H$ with operator domain $\Dd(H)\subseteq \Dd(Q)$ satisfying $(Hv,w)=Q(v,w)$ for $v\in\Dd(H)$ and $w\in\Dd(Q)$. The corresponding {\em spectrum} of the operator $H$ is denoted by $\sigma(H)$. Each closed, symmetric, semibounded, sesquilinear form defines a quadratic form by $Q(v):=Q(v,v)$  for $v\in\Dd(Q)$.

\medskip

A proof of the following Weyl-sequence criterion  can be found in \cite{DeDuVi98,Stollmann01,BoLeSt09,KrLu14,CL14,BP17}.

%%%%%%%%%%%%%%%%%%
\begin{proposition}[\cite{DeDuVi98}]
\label{Prop:CritWeyl}
Let $Q:\Dd(Q)\times \Dd(Q)\to\CM$ be a closed, symmetric, semibounded  sesquilinear form with associated self-adjoint operator $H$. Then the following assertions are equivalent:
\begin{itemize}
\item[(i)] $\lambda\in\sigma(H)$
\item[(ii)] There exists a sequence $\{w_n\}_{n\in\NM}\subseteq\Dd(Q)$ with $\lim\limits_{n\to\infty}\|w_n\|=1$ satisfying
%%%%%%%%%%%%%%%%%%
\begin{equation}\label{Eq:Weyl}
\liminf_{n\to\infty} \;
	\sup_{v\in\Dd(Q), \|v\|_Q\leq 1}
		\Big|
			Q(w_n,v)-\lambda  ( w_n,v)
		\Big|
			= 0.
\end{equation}
%%%%%%%%%%%%%%%%%%
\end{itemize}
\end{proposition}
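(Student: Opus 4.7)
The plan is to reduce the equivalence to the classical Weyl criterion for the self-adjoint operator $H$ by using the square root of a strictly positive shift of $H$ to identify $(\Dd(Q),\|\cdot\|_Q)$ isometrically with $\Hh$. Under this identification the supremum in (ii) becomes an ordinary Hilbert-space norm involving a bounded function of $H$, and both implications then follow from elementary spectral calculus.

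Set $K:=H+(1-c)I$ and $\mu:=\lambda+1-c$. Then $K$ is self-adjoint with $K\geq I$, $\sigma(K)\subseteq[1,\infty)$, and $\lambda\in\sigma(H)$ iff $\mu\in\sigma(K)$. By the standard representation of closed semibounded forms, $\Dd(Q)=\Dd(K^{1/2})$ with $\|v\|_Q=\|K^{1/2}v\|$, so $K^{1/2}\colon(\Dd(Q),\|\cdot\|_Q)\to\Hh$ is a surjective isometry; in particular $\|v\|\leq\|v\|_Q$. The key computation is that, for every $w\in\Dd(Q)$,
$$\sup_{v\in\Dd(Q),\,\|v\|_Q\leq 1}|Q(w,v)-\lambda(w,v)|=\|(I-\mu K^{-1})K^{1/2}w\|.$$
This is obtained by rewriting $Q(w,v)-\lambda(w,v)=(K^{1/2}w,K^{1/2}v)-\mu(w,v)=(K^{1/2}w-\mu K^{-1/2}w,K^{1/2}v)$ and parametrizing $v$ by $\varphi:=K^{1/2}v$, with $\|v\|_Q\leq 1$ corresponding to $\|\varphi\|\leq 1$.

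For (i)$\Rightarrow$(ii), the classical Weyl criterion applied to $H$ furnishes $\{u_n\}\subset\Dd(H)\subseteq\Dd(Q)$ with $\|u_n\|=1$ and $\|(H-\lambda)u_n\|\to 0$; for $v\in\Dd(Q)$ one has $Q(u_n,v)-\lambda(u_n,v)=((H-\lambda)u_n,v)$, so $|Q(u_n,v)-\lambda(u_n,v)|\leq\|(H-\lambda)u_n\|\|v\|_Q$ via $\|v\|\leq\|v\|_Q$, and $w_n:=u_n$ does the job. For (ii)$\Rightarrow$(i), I argue by contrapositive: assume $\mu\notin\sigma(K)$. The function $\phi(\xi):=(\xi-\mu)/\xi$ is continuous on $[1,\infty)$, nonvanishing on $\sigma(K)$, and satisfies $\phi(\xi)\to 1$ as $\xi\to\infty$, so $\delta:=\inf_{\xi\in\sigma(K)}|\phi(\xi)|>0$. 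By the functional calculus, $I-\mu K^{-1}=\phi(K)$ is invertible with bounded inverse, whence
$$\|(I-\mu K^{-1})K^{1/2}w\|\geq\delta\|K^{1/2}w\|=\delta\|w\|_Q\geq\delta\|w\|.$$
If $\|w_n\|\to 1$, this lower bound exceeds $\delta/2$ eventually, contradicting that the $\liminf$ in (ii) vanishes.

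The main technical step is the identification of the supremum in (ii) with the Hilbert-space norm $\|(I-\mu K^{-1})K^{1/2}w\|$. This requires verifying that the change of variables $v=K^{-1/2}\varphi$ bijectively parametrizes the closed unit ball of $(\Dd(Q),\|\cdot\|_Q)$ by that of $\Hh$, which in turn uses the surjectivity of $K^{1/2}$ (equivalently, the boundedness of $K^{-1/2}\colon\Hh\to\Dd(K^{1/2})$) as well as the self-adjointness of $I-\mu K^{-1}$. Once this isometric identification is in place, the rest of the argument reduces to elementary spectral calculus for the single self-adjoint operator $K$.
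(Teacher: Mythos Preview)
Your argument is correct. The paper itself does not supply a proof of this proposition; it simply cites the references \cite{DeDuVi98,Stollmann01,BoLeSt09,KrLu14,CL14,BP17} and adds the remark that the $\liminf$ formulation follows from the usual $\lim$ formulation by passing to a subsequence. Your approach---identifying $(\Dd(Q),\|\cdot\|_Q)$ isometrically with $\Hh$ via $K^{1/2}$ (where $K=H+(1-c)I\geq I$), rewriting the supremum as $\|(I-\mu K^{-1})K^{1/2}w\|$, and then invoking elementary spectral calculus for the bounded function $\phi(\xi)=1-\mu/\xi$---is one of the standard routes to this result and is in the spirit of the proofs found in the cited literature. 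There is thus nothing substantively different to compare; your write-up is a clean, self-contained substitute for the reference the paper gives.
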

%%%%%%%%%%%%%%%%%%

\begin{remark}
{\em 
Actually, the original statement of \eqref{Eq:Weyl} in Proposition \ref{Prop:CritWeyl} is with a limit instead of a liminf; however, passing to a subsequence, the statement with the liminf is easily obtained.
}
\end{remark}

%%%%%%%%%%%%%%%%%%%%%%%%%%%%%%%%%%%%%%%%%%%%%%%%%%%%%%%%%%%%%%%%%%%%%%%%%%%%%%%%%%%%%
\subsection{Criticality theory}
\label{Ssec:PrelCrit}
%%%%%%%%%%%%%%%%%%%%%%%%%%%%%%%%%%%%%%%%%%%%%%%%%%%%%%%%%%%%%%%%%%%%%%%%%%%%%%%%%%%%%

In the following, a reminder of the criticality theory as well as the ground state transform of an operator $H=-\mathrm{div}(A\nabla\cdot)+V$ is provided. Throughout this section $Q$ denotes the form given in \eqref{Eq:ForSchr} and $H$ is its unique self-adjoint operator.

\medskip

We say $H$ is {\em supercritical in $\Omega$}, if $H$ is not nonnegative. Furthermore, $H$ is called {\em critical in $\Omega$} if $H\geq 0$ and for each nonnegative $W\in L^p_{loc}(\Omega,\dm)$, with $p>\frac{d}{2}$, the operator $H-W$ is supercritical. Otherwise, $H$ is called {\em subcritical}. As explained below, each critical operator admits a unique (up to a multiplicative constant) $H$-harmonic function, which is called (Agmon) ground state.

\medskip

Consider the Sobolev space $W^{1,2}(\Omega)$ of functions in $L^2(\Omega,\dm)$ admitting weak derivatives up to order $1$ in $L^2(\Omega,\dm)$. Let $W^{1,2}_{loc}(\Omega)$ be the set of measurable $f:\Omega\to\CM$ such that $f|_K\in W^{1,2}(\Omega)$ for each compact $K\subseteq\Omega$. Note that $Q(u,v)$ is well-defined for $u\in W^{1,2}(\Omega)$ and $v\in\core$ as $v$ has compact support. An element $u\in W^{1,2}_{loc}(\Omega)$ is  called {\em $H$-(super)harmonic in $\Omega$} if $Q(u,v)=0$ ($Q(u,v)\geq0$) holds for all $v\in\core$. Denote by $\Cc_H(\Omega)$ the cone of all positive $H$-harmonic functions in $\Omega$. 

\medskip

We write $K\Subset\Omega$ whenever $\overline{K}$ is compact and $K\subseteq\Omega$. Let $K\Subset\Omega$ and $h$ be a positive $H$-harmonic function in $\Omega\setminus K$. Then $h$ is called {\em positive $H$-harmonic of minimal growth at infinity in $\Omega$} if for all $K\Subset K'\Subset\Omega$ with smooth boundary and each $H$-superharmonic $v\in\Cc(\overline{\Omega\setminus K'})$ satisfying $h\leq v$ on the boundary $\partial K'$, the estimate $h\leq v$ holds in $\Omega\setminus K'$. Then $h\in\Cc_H(\Omega)$ is called the {\em (Agmon) ground state} if $h$ has minimal growth at infinity (it can be shown that it is unique up to a multiplicative constant). 

\medskip

Suppose $H\geq 0$. A sequence of non-negative functions $\{\varphi_n\}_{n\in\NM}\subseteq\core$ is called a {\em null-sequence} if there is a ball $B\Subset\Omega$ satisfying, for some constant $c>0$, 
%%%%%%%%%%%%%%%%%%
$$\int_B\varphi^2_n \dm=c\,,\; n\in\NM\,,
	\qquad\text{ and }\qquad
	\lim_{n\to\infty} Q(\varphi_n,\varphi_n)=0\,.
$$ 
%%%%%%%%%%%%%%%%%%
With this at hand, $h\in W^{1,2}_{loc}(\Omega)$ is called a {\em null-state of $Q$} if $h$ is strictly positive and there is a null-sequence $\{\varphi_n\}_{n\in\NM}$ that converges in $L^2_{loc}(\Omega)$ to $h$.

\medskip

There are various characterizations of criticality which are provided in the following statement. The proof of these results can be found in \cite{Pi07,PiTi06,Pinsky95,KePiPo16} and references therein.

%%%%%%%%%%%%%%%%%%
\begin{theorem}[Criticality characterization]\label{Theo:char_criticality}
Let $Q$ be the form given in \eqref{Eq:ForSchr} and $H$ its unique self-adjoint operator of the form \eqref{Eq:Schro}. If $Q$ is nonnegative on $\core$, then the following assertions are equivalent:
%%%%%%%%%%%%%%%%%%
\begin{itemize}
  \item [(i)] $H$ is critical in $\Omega$.
  \item [(ii)]  $H$ admits an (Agmon) ground state in $\Omega$.
  \item [(iii)] $H$ admits a unique  (up to a multiplicative constant) positive $H$-super\-harmonic function  in $\Omega$.
\item [(iv)]  For every open ball $B\Subset \Gw$, there is a null-sequence $\{h_{n}\}_{n\in\NM}$ such that $\int_B h_{n}(x)^2\dm=1$ for all $n\geq 0$.
\item [(v)] There  exists a null-sequence $\{h_{n}\}_{n\in\NM}$ satisfying $0\leq h_{n}\leq h$ in $\Gw$, where $h$ is a positive $H$-harmonic function on $\Gw$, \!and $h_{n}(x) \to h(x)$  locally uniformly in $\Gw$.
\end{itemize}
%%%%%%%%%%%%%%%%%%
In particular, $h$ is a null-state if and only if it is an (Agmon) ground state.
\end{theorem}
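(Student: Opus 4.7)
The plan is to establish the cyclic chain of implications (i) $\Rightarrow$ (iv) $\Rightarrow$ (v) $\Rightarrow$ (ii) $\Rightarrow$ (iii) $\Rightarrow$ (i), which is the standard route in criticality theory. The main analytic tools I would rely on are: the elliptic Harnack inequality for nonnegative $H$-(super)harmonic functions (valid under \eqref{Eq:A} and $V\in L^p_\loc$ with $p>d/2$), interior elliptic regularity to extract locally convergent subsequences, the minimum principle on relatively compact subdomains, and, for the final step, the existence of a minimal positive Green function whenever $H$ is subcritical.

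For (i) $\Rightarrow$ (iv), fix a ball $B\Subset\Omega$ and choose a nonzero nonnegative $W\in L^p_\loc(\Omega,\dm)$ supported in $B$. Criticality is equivalent to $\inf\{Q(\varphi):\varphi\in\core,\,\int W\varphi^2\dm=1\}=0$, since any strictly positive infimum would mean $H-\varepsilon W\geq 0$ for small $\varepsilon$. A minimizing sequence, rescaled so that $\int_B\varphi_n^2\dm$ is a fixed positive constant (and with $\varphi_n$ replaced by $|\varphi_n|$, which does not increase $Q$), is the desired null-sequence. For (iv) $\Rightarrow$ (v), the uniform bound on $Q(\varphi_n,\varphi_n)$ combined with the normalization gives local $W^{1,2}$ and $L^\infty$ bounds via Caccioppoli-type estimates and Moser/Harnack iteration; a diagonal subsequence converges locally uniformly to some $h\in W^{1,2}_\loc(\Omega)$ with $Hh=0$ weakly. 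Preservation of the $L^2(B)$-mass rules out $h\equiv 0$, and the Harnack inequality promotes nonnegativity to strict positivity, giving the approximating null-sequence $h_n\to h$.

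For (v) $\Rightarrow$ (ii), I would verify minimal growth of $h$ at infinity by a two-step comparison: given $K\Subset K'\Subset\Omega$ and an $H$-superharmonic $v$ on $\overline{\Omega\setminus K'}$ with $v\geq h$ on $\partial K'$, apply the minimum principle to $v-h_n+\varepsilon$ on an exhausting family of annuli, using that $h_n$ is compactly supported to handle the outer boundary, then let $n\to\infty$ and $\varepsilon\to 0$. For (ii) $\Rightarrow$ (iii), let $u$ be any positive $H$-superharmonic function; by Harnack on a compact $K$ there is $c>0$ with $u\geq c h$ on $\partial K$, so minimal growth of $h$ yields $u\geq c h$ on $\Omega\setminus K$, and one refines to $u\geq c' h$ globally. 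Then the sharpest constant $c^* := \sup\{t>0 : u \geq t h\}$ is attained, and $u-c^* h$ is a nonnegative $H$-superharmonic function touching zero, which by the strong minimum principle forces $u=c^* h$. Finally, for (iii) $\Rightarrow$ (i), if $H$ were subcritical the positive minimal Green function $G(\cdot,y)$ would give a positive $H$-superharmonic function whose singularity at $y$ prevents it from being a scalar multiple of $h$, contradicting (iii); the concluding equivalence of null-states and Agmon ground states then follows by combining the characterizations (ii) and (iv)--(v) with uniqueness.

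The step I expect to be hardest is the uniqueness implication (ii) $\Rightarrow$ (iii): the minimal-growth comparison is classical but delicate, and controlling the global lower bound $u\geq c^* h$ together with the touching argument requires a careful interplay between Harnack and the minimum principle. The construction of null-sequences in (i) $\Rightarrow$ (iv) also has a subtle point, since the minimizing sequence must be prevented from concentrating outside the normalizing ball, which typically requires a compactness argument grounded in the locally uniform ellipticity \eqref{Eq:A}.
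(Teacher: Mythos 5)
The paper does not prove Theorem~\ref{Theo:char_criticality}: it records it as a known result and cites \cite{Pi07,PiTi06,Pinsky95,KePiPo16} for the proofs. So there is no in-paper argument to compare against; I will assess your sketch on its own terms. Your cyclic route (i)\,$\Rightarrow$\,(iv)\,$\Rightarrow$\,(v)\,$\Rightarrow$\,(ii)\,$\Rightarrow$\,(iii)\,$\Rightarrow$\,(i) is the standard one in the literature, and your identifications of the hard points in (i)\,$\Rightarrow$\,(iv) and (ii)\,$\Rightarrow$\,(iii) are accurate.

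There is, however, a concrete gap in your step (v)\,$\Rightarrow$\,(ii). You propose to apply the minimum principle to $v-h_n+\varepsilon$ on exhausting annuli, using the compact support of $h_n$ to control the outer boundary. The minimum principle applies to $H$-superharmonic functions, but $v-h_n$ is not $H$-superharmonic: $h_n$ is merely a member of a null-sequence (a cutoff with small energy), so $Hh_n$ has no sign, and hence neither does $H(v-h_n)$. The compact support of $h_n$ helps on $\partial D$, but without superharmonicity of the compared function you cannot propagate the boundary inequality into the annulus. The correct object to compare $v$ with is not $h_n$ but the $H$-harmonic extensions $w_R$ in exhausting annuli with boundary data $h|_{\partial K'}$ and $0$ on the outer boundary; one shows $w_R\leq v+\varepsilon$, lets $R\to\infty$, and then uses the null-sequence to identify $\lim_R w_R$ with $h$ (this last identification is precisely where $Q(h_n)\to 0$ enters). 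A second, smaller, gap is in (iv)\,$\Rightarrow$\,(v): you extract a subsequence of the given null-sequence converging to $h$, but you also need the dominated approximants with $0\leq h_n\leq h$. This requires replacing $\varphi_n$ by $\min(\varphi_n,h)$ and verifying that the truncated sequence is still a null-sequence, which uses the ``Dirichlet-form'' inequality $Q(\min(u,\phi))+Q(\max(u,\phi))\leq Q(u)+Q(\phi)$ in conjunction with the $H$-harmonicity of $h$, together with a limiting argument to handle that $\max(\varphi_n,h)$ is not compactly supported. Neither gap is fatal in spirit, and both are closed in the cited references (especially \cite{PiTi06}), but as written the sketch would not compile into a complete proof.
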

%%%%%%%%%%%%%%%%%%

Let $h\in W^{1,2}_{loc}(\Omega)\cap \Cc(\Omega)$ be a strictly positive function. Define the operator
%%%%%%%%%%%%%%%%%%
$$T_h:L^2(\Omega,\dm)\to L^2(\Omega,\dmu)\,,\quad
T_h(v):= \frac{v}{h}\,,
$$
%%%%%%%%%%%%%%%%%%
where $\dmu:= h^2\dm$.
Clearly, $T_h$ is invertible and $T_h^{-1}=T_{h^{-1}}$. Suppose that $h$ is a positive function such that $(H+W)h=0$ where $W\in\Cc(\Omega)$ is non-negative and bounded. Define a self-adjoint operator

$$H_h 
	:= T_h\circ H \circ T_h^{-1},\quad \mathcal{D}(H_h)=T_h\mathcal{D}(H)
$$
and its associated quadratic form quadratic form

$$Q_h(u,v) = Q(T_h^{-1}u,T_h^{-1}v) ,\quad u,v\in T_h\mathcal{D}(Q).$$
Since $C_0^\infty(\Omega)$ is a core for $Q$, it follows that $T_hC_0^\infty(\Omega)$ is a core for $Q_h$.
%%%%%%%%%%%%%%%%%%
Note that $Q_h$ and $Q$ are both semibounded with the same constant. Without loss of generality, let
$$
\|v\|_Q = \sqrt{Q(v,v)+(1+\|W\|_\infty)\|v\|_{2,m}^2}\,,\qquad \|v\|_{Q_h} = \sqrt{Q_h(v,v)+(1+\|W\|_\infty)\|v\|_{2,\mu}^2}.
$$
Since $T_h$ is an isometry on the $L^2$-spaces, $\|v\|_Q=\|T_hv\|_{Q_h}$ holds by definition for all $v\in\core$. Thus, 
$$
T_h:\big(\Dd(Q),\|\cdot\|_Q\big)\to \big(\Dd(Q_h),\|\cdot\|_{Q_h}\big)
$$ 
is a surjective isometry. Note that since $A$ is locally uniformly elliptic, a sequence $\{\varphi_n\}_{n\in\N}\subset\core$ converges in $W^{1,2}_{loc}(\Omega)$ to $\varphi\in W^{1,2}_{loc}(\Omega)\cap \Cc(\Omega)$, if for each $v\in L^\infty$ with compact support,
$$
\lim_{n\to\infty}\int_\Omega |v|^2\left( |\nabla(\varphi_n-\varphi)|_A^2+|\varphi_n-\varphi|^2\right)\dm=0
$$

%%%%%%%%%%%%%%%%%%
\begin{lemma}
\label{Lem:W12conv}
If $\{\varphi_n\}_{n\in\N}\subset\core$ converges to $\varphi$ in $W^{1,2}_{loc}(\Omega)$, then for every $v\in\core$, we have $\lim_{n\to\infty}\|v(\varphi_n-\varphi)\|_{Q_h}$.
\end{lemma}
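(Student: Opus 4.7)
The plan is to exploit the ground state transform identity, which eliminates any dependence on $\nabla h$ from the $Q_h$-quadratic form. Since $h$ satisfies $(H+W)h = 0$ in the weak sense, a standard computation shows that for any bounded $w\in W^{1,2}(\Omega)$ with compact support,
\begin{equation*}
Q_h(w,w) \;=\; Q(hw, hw) \;=\; \int_\Omega h^2 |\nabla w|_A^2 \dm \;-\; \int_\Omega W h^2 w^2 \dm.
\end{equation*}
To derive this, I would test the weak equation $(H+W)h=0$ against $\phi = hw^2$ (an admissible test after approximation in $\core$, since $hw^2 \in W^{1,2}$ has compact support and is bounded), which yields exactly the cancellation of the $w^2|\nabla h|_A^2$ and the $2hw\langle A\nabla h,\nabla w\rangle$ cross term appearing when one expands $|\nabla(hw)|_A^2$. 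Boundedness of $W$ then gives the clean bound
\begin{equation*}
\|w\|_{Q_h}^2 \;\leq\; \int_\Omega h^2 |\nabla w|_A^2 \dm \;+\; (1+2\|W\|_\infty)\int_\Omega h^2 w^2 \dm.
\end{equation*}

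Next, applying this inequality with $w = v(\varphi_n-\varphi)$ and expanding
\begin{equation*}
\nabla\bigl(v(\varphi_n-\varphi)\bigr) \;=\; (\varphi_n-\varphi)\nabla v \;+\; v\nabla(\varphi_n-\varphi),
\end{equation*}
the Cauchy--Schwarz inequality reduces the proof to showing that the three quantities
\begin{equation*}
\int_\Omega h^2 |\nabla v|_A^2 |\varphi_n-\varphi|^2 \dm,\qquad \int_\Omega h^2 v^2 |\nabla(\varphi_n-\varphi)|_A^2 \dm,\qquad \int_\Omega h^2 v^2 |\varphi_n-\varphi|^2 \dm
\end{equation*}
each tend to $0$ as $n\to\infty$. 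Each has the form $\int_\Omega \psi^2\bigl(|\nabla(\varphi_n-\varphi)|_A^2 + |\varphi_n-\varphi|^2\bigr)\dm$ for a suitable $\psi$ in $L^\infty$ with compact support: take $\psi=h|\nabla v|_A$ for the first integral and $\psi=hv$ for the other two. Since $v\in\core$, both $v$ and $|\nabla v|_A$ are continuous with compact support, and $h$ is continuous on $\Omega$ (hence bounded on $\supp v$), so in each case $\psi\in L^\infty$ with compact support in $\Omega$. The convergence assumption in the $W^{1,2}_{loc}$ sense stated immediately before the lemma then forces all three integrals to vanish in the limit.

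The main obstacle is the rigorous justification of the ground state transform identity under only $W^{1,2}_{loc}$ regularity of $h$; everything else is routine. The resolution is that the two troublesome terms involving $|\nabla h|_A^2$ and $\langle A\nabla h,\nabla w\rangle$ in the expansion of $|\nabla(hw)|_A^2$ exactly match the combination produced by testing $(H+W)h=0$ against $hw^2$, so no pointwise control on $\nabla h$ is ever needed. This is exactly why the ground state transform is the right device: it turns the unknown behaviour of $\nabla h$ into a non-issue and reduces the lemma to the verbatim definition of $W^{1,2}_{loc}$-convergence.
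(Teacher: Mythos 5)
Your proof is correct and follows essentially the same route as the paper's: both rest on the ground state transform identity $Q_h(w,w) = \int_\Omega h^2|\nabla w|_A^2\dm - \int_\Omega Wh^2w^2\dm$, expand $\nabla\bigl(v(\varphi_n-\varphi)\bigr)$ by the Leibniz rule, and invoke the paper's notion of $W^{1,2}_{loc}$-convergence (stated immediately before the lemma) to send all resulting integrals to zero. The only cosmetic difference is that you spell out the derivation of the transform identity by testing $(H+W)h=0$ against $hw^2$, whereas the paper invokes it implicitly via Proposition~\ref{Prop:GrouStaTransf}(i); the substance is the same.
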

%%%%%%%%%%%%%%%%%%

%%%%%%%%%%%%%%%%%%
\begin{proof}
Let $v\in\core$. First note that
%%%%%%%%%%%%%%%%%%% 
$$\|v(\varphi_n-\varphi)\|_{2,\mu}^2
	\leq ||h||^2_{L^\infty(supp(v))} \int_\Omega |v|^2|\varphi_n-\varphi|^2 \dm
$$
%%%%%%%%%%%%%%%%%%%
which converges to zero. Furthermore, a short computation yields
%%%%%%%%%%%%%%%%%%%
\begin{align*}
|\nabla\big(v(\varphi_n-\varphi)\big)|_A^2
	= &|v|^2 |\nabla(\varphi_n-\varphi)|_A^2 + v (\varphi_n-\varphi) \langle A\nabla (\varphi_n-\varphi),\nabla v\rangle\\
	&+ \overline{v} (\varphi_n-\varphi) \langle A\nabla v,\nabla (\varphi_n-\varphi)\rangle
		+ |\varphi_n-\varphi|^2 |\nabla v|_A^2\,.
\end{align*}
%%%%%%%%%%%%%%%%%%%
Hence, 
%%%%%%%%%%%%%%%%%%%
\begin{align*}
&|Q_h(v(\varphi_n-\varphi))|\\
	\leq &\int_\Omega|\nabla\big(v(\varphi_n-\varphi)\big)|_A^2 h^2\dm + \int_\Omega W|v|^2 |(\varphi_n-\varphi)|^2 h^2\dm\\
	\leq &C\left(2	\int_\Omega |v|^2|\nabla(\varphi_n-\varphi)|_A^2\dm 
			+ 2 \int_\Omega |\nabla v|_A^2 |\varphi_n-\varphi|^2\dm
			+ ||W||_\infty\int_\Omega |v|^2 |(\varphi_n-\varphi)|^2\dm\right)^2,
\end{align*}
with $C=||h||^2_{L^\infty(supp(v))}$, and where $2ab\leq a^2+b^2$ was used in the last estimate. One then concludes that $|Q_h(v(\varphi_n-\varphi))|$ converges to zero.
\end{proof}
%%%%%%%%%%%%%%%%%%

%%%%%%%%%%%%%%%%%%
\begin{proposition}[Ground state transform]\label{Prop:GrouStaTransf}
Let $Q$ be a semibounded form as defined in \eqref{Eq:ForSchr} with its associated self-adjoint operator $H$. Consider a bounded $W:\Omega\to\R$ such that $H+W\geq 0$ and let $h\in\Cc(\Omega)\cap W^{1,2}_{loc}(\Omega)$ be a positive $(H+W)$-harmonic function. Then the following assertions hold.
\begin{itemize}

\item[(i)] The following formula holds: for every $u,v$ in $W^{1,2}(\Omega)$ with compact support,
%%%%%%%%%%%%%%%%%%
$$Q_h(u,v) 	= \int_\Omega \langle A\nabla u,\nabla v\rangle \, d\mu - \int_\Omega W u\overline{v}\dmu\,.
$$
%%%%%%%%%%%%%%%%%%

\item[(ii)] $C_0^\infty(\Omega)\subset \mathcal{D}(Q_h)$ is a core for $Q_h$.

%The vector space $T_h\core$ is a core in $\Dd(Q_h)$ and $T_h^{-1}\core$ is a core in $\Dd(Q)$. Furthermore, $T_h:\big(\Dd(Q),\|\cdot\|_Q\big)\to\big(\Dd(Q_h),\|\cdot\|_{Q_h}\big)\,,\, v\mapsto \frac{v}{h}\,,$ is an isometry.

\item[(iii)] The spectra $\sigma(H)$ and $\sigma(H_h)$ coincide as subsets of $\RM$.

\item[(iv)] If $H$ is critical, $W\equiv 0$ and $h$ is the Agmon ground state of $H$, then $H_h$ is critical with (Agmon) ground state $1$.
\end{itemize}

\end{proposition}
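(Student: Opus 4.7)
The plan is to establish the four assertions in order, with (i) being the main computational ingredient and the others following by relatively standard functional-analytic arguments.

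For part (i), I would first prove the identity for real-valued $u,v\in\core$ and then extend. The starting point is the definition $Q_h(u,v)=Q(hu,hv)$, and the product rule expansion
$$
\langle A\nabla(hu),\nabla(hv)\rangle=h^{2}\langle A\nabla u,\nabla v\rangle+h\langle A\nabla h,u\nabla v+v\nabla u\rangle+uv\langle A\nabla h,\nabla h\rangle.
$$
The middle term can be rewritten using $h\nabla(uv)=\nabla(huv)-uv\nabla h$, which produces $\int\langle A\nabla h,\nabla(huv)\rangle\dm$ minus $\int uv|\nabla h|_A^{2}\dm$; the latter exactly cancels the last term above. Since $huv\in W^{1,2}(\Omega)$ has compact support, it is an admissible test function for the weak equation $(H+W)h=0$ (by $\core$-density in $W^{1,2}$ of compact support, together with $h\in W^{1,2}_{loc}\cap L^{\infty}_{loc}$ and $V,W\in L^{p}_{loc}$, $p>d/2$, via Sobolev embedding). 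This substitution rewrites $\int\langle A\nabla h,\nabla(huv)\rangle\dm$ as $-\int(V+W)h^{2}uv\dm$, and combining everything the $V h^{2}uv$ terms cancel and we are left with the claimed formula. The complex case follows by sesquilinear extension, and the promotion from $u,v\in\core$ to $u,v\in W^{1,2}$ with compact support uses Lemma~\ref{Lem:W12conv} on the left-hand side together with analogous dominated-convergence estimates on the right-hand side.

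For part (ii), I would leverage the fact already recorded in the text that $T_{h}\core$ is a core for $Q_{h}$: it therefore suffices to approximate each element of $T_{h}\core$ by $\core$-functions in $\|\cdot\|_{Q_{h}}$. Given $\psi\in\core$ with $\supp(\psi)\subset K$, the function $\psi/h=T_{h}\psi$ lies in $W^{1,2}(\Omega)$ with compact support in $K$, since $h$ and $h^{-1}$ are bounded on $K$. Standard mollification produces $\{\varphi_{n}\}\subset\core$ with $\varphi_{n}\to\psi/h$ in $W^{1,2}_{loc}$. Taking a bump function $v\in\core$ equal to $1$ on $K$, Lemma~\ref{Lem:W12conv} delivers $\|v(\varphi_{n}-\psi/h)\|_{Q_{h}}\to 0$, which (since $v\equiv 1$ on the supports in question) coincides with $\|\varphi_{n}-\psi/h\|_{Q_{h}}\to 0$.

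Part (iii) is immediate: the map $T_{h}\colon L^{2}(\Omega,\dm)\to L^{2}(\Omega,\dmu)$ is unitary, because $\|T_{h}v\|_{2,\mu}^{2}=\int|v/h|^{2}h^{2}\dm=\|v\|_{2,m}^{2}$, and by its very definition $H_{h}=T_{h}HT_{h}^{-1}$, so $H$ and $H_{h}$ are unitarily equivalent and share spectrum. For part (iv), with $W\equiv 0$ the formula from (i) reduces to $Q_{h}(u,v)=\int\langle A\nabla u,\nabla v\rangle\dmu$, which shows immediately that the constant function $1$ is $H_{h}$-harmonic. Criticality of $H_{h}$ with ground state $1$ will then follow from the null-sequence characterization of Theorem~\ref{Theo:char_criticality}: starting from a null-sequence $\{\varphi_{n}\}\subset\core$ for $Q$ that converges locally uniformly to $h$ (which exists by Theorem~\ref{Theo:char_criticality}(v)), the isometry $T_{h}$ produces $\tilde\varphi_{n}:=\varphi_{n}/h\in W^{1,2}(\Omega)$, compactly supported, with $Q_{h}(\tilde\varphi_{n})=Q(\varphi_{n})\to 0$ and $\tilde\varphi_{n}\to 1$ locally uniformly; the normalization $\int_{B}\tilde\varphi_{n}^{2}\dmu=\int_{B}\varphi_{n}^{2}\dm$ is preserved. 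Finally, a diagonal argument based on (ii) replaces each $\tilde\varphi_{n}$ by a $\core$-approximant, producing an honest null-sequence for $H_{h}$ converging to $1$, so $1$ is a null-state and therefore the Agmon ground state.

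The main obstacle I anticipate is the careful justification of part (i): one must verify that $huv$ is admissible in the weak equation for $h$ under only $L^{p}_{loc}$ hypotheses on $V,W$, and that the manipulations involving $|\nabla h|_{A}^{2}$ are genuinely integrable against compactly supported test functions. Once this is secured, parts (ii)--(iv) fall out by combining the isometry of $T_{h}$ with the already-available density result and Theorem~\ref{Theo:char_criticality}.
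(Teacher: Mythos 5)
Your proposal reproduces the paper's structure and most of its content, but there is one genuine (if minor) gap in part (ii), and part (iv) takes a more elaborate route than the paper's.

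For (i), your computation is exactly the ``short computation'' the paper alludes to, and it is correct: after the product-rule expansion, the term $uv\,|\nabla h|_A^2$ cancels against the piece coming from rewriting $h\nabla(uv)=\nabla(huv)-uv\,\nabla h$, and then testing $(H+W)h=0$ against $huv$ turns $\int\langle A\nabla h,\nabla(huv)\rangle\dm$ into $-\int(V+W)h^2uv\dm$, so the $V$-contributions cancel and the $-\int W uv\,d\mu$ remains. Your attention to the admissibility of $huv$ as a test function (compactly supported $W^{1,2}$, $V,W\in L^p_\loc$ with $p>d/2$, Sobolev) is exactly the point that needs care, and the argument goes through.

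For (ii), however, your argument only establishes \emph{density}, not \emph{inclusion}. Recall that ``$\core$ is a core for $Q_h$'' means both $\core\subset\Dd(Q_h)$ and that $\core$ is $\|\cdot\|_{Q_h}$-dense in $\Dd(Q_h)$. You approximate a fixed $\psi/h\in T_h\core$ by mollifications $\varphi_n\in\core$ and invoke Lemma~\ref{Lem:W12conv}; but at that point you do not yet know that $\varphi_n\in\Dd(Q_h)$, so the convergence $\|\varphi_n-\psi/h\|_{Q_h}\to 0$ takes place for the formally computed norm, and by itself it does not put $\varphi_n$ into the abstract domain $\Dd(Q_h)=\overline{T_h\core}^{\|\cdot\|_{Q_h}}$. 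Notice that the limit direction is backwards for the inclusion step: to show $\varphi\in\Dd(Q_h)$ one must approximate $\varphi$ \emph{by} elements of $T_h\core$, not the other way around. The paper does precisely this first (taking $h_n\to h$ in $W^{1,2}_\loc$ and setting $v_n=h_nv/h\in T_h\core$, then $v_n\to v$ in $Q_h$-norm), and only afterwards proves density by approximating $u/h$ via $h_n'u\in\core$. Your argument needs the analogous first half before the mollification step can deliver the core property.

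Part (iii) matches the paper: $T_h$ is a surjective isometry of form domains, hence a unitary equivalence of the operators.

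Part (iv) is correct but takes a different route. The paper simply observes that minimal growth at infinity is preserved under ground-state transform, so $T_hh=1$ is the Agmon ground state of $H_h$. You instead push a null-sequence through $T_h$ (using Theorem~\ref{Theo:char_criticality}(v)) and then upgrade the compactly supported $W^{1,2}$ functions $\varphi_n/h$ to genuine $\core$ approximants via part (ii). Both work; the paper's argument is shorter and does not require the diagonal step, but your version has the advantage of not relying on the (implicit) fact that ``minimal growth'' is a $T_h$-invariant notion.

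Overall the proposal is sound, with the same decomposition into four claims; fixing part (ii) requires only adding the paper's first half-paragraph, namely the approximation of $\core$ functions \emph{from inside} $T_h\core$.
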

%%%%%%%%%%%%%%%%%%

%%%%%%%%%%%%%%%%%%
\begin{proof}
(i) A short computation implies the result, using that $h$ is $H+W$-harmonic.

\medskip

(ii) First we show that $C_0^\infty(\Omega)\subset \mathcal{D}(Q_h)$. Let $v\in\core$ and $\{h_n\}_{n\in\N}\subset\core$ be such that it converges to $h$ in $W^{1,2}_{loc}(\Omega)$. Define $v_n:=\frac{h_n v}{h}\in T_h\core$. One has
$$
\nabla\frac{h_n}{h} = \frac{(h\nabla h_n - h \nabla h) + (h\nabla h - h_n\nabla h)}{h^2}.
$$
Hence, $\{\frac{h_n}{h}\}_{n\in\N}$ converges to $1$ in $W^{1,2}_{loc}(\Omega)$. Applying Lemma~\ref{Lem:W12conv}, we derive $\lim_{n\to\infty}\|v_n-v\|_{Q_h}=0$. As $T_h\core\subset \Dd(Q_h)$, $v\in \Dd(Q_h)$ follows.

\medskip

Since $\frac{1}{h}\in W^{1,2}_{loc}(\Omega)\cap\Cc(\Omega)$, let $\{h'_n\}_{n\in\N}\subset\core$ be such that it converges to $\frac{1}{h}$ in $W^{1,2}_{loc}(\Omega)$. Let $u\in\core$. Then $\lim_{n\to\infty}\|h'_nu-\frac{u}{h}\|_{Q_h}=0$ follows by Lemma~\ref{Lem:W12conv}. Consequently, $\frac{u}{h}\in T_h\core$ is approximated  in the $Q_h$-norm by $\{h'_n u\}_{n\in\N}\subset\core$. Since $T_h\core\subset\Dd(Q_h)$ is a core, $\core\subset\Dd(Q_h)$ is also a core of $Q_h$.

\medskip

(iii) Since $T_h:\big(\Dd(Q),\|\cdot\|_Q\big)\to \big(\Dd(Q_h),\|\cdot\|_{Q_h}\big)$ is a surjective isometry, $H$ and $H_h$ are unitarily equivalent implying $\sigma(H)=\sigma(H_h)$.

\medskip

(iv) It is straightforward to check that $H$ is critical if and only if $H_h$ is critical. Furthermore, $T_h h$ is a $H_h$-harmonic function with minimal growth at infinity and $T_h h=1$. Thus, $1$ is an (Agmon) ground state of $H_h$.
\end{proof}
%%%%%%%%%%%%%%%%%%

%%%%%%%%%%%%%%%%%%
\begin{lemma}\label{Lem:AdSeq-crit}
Let $Q$ be a semi-bounded form as defined in \eqref{Eq:ForSchr} with associated self-adjoint operator $H$. Let $W:\Omega\to \R$ be a bounded potential such that $H+W$ is non-negative. Let $h$ be a positive $(H+W)$-harmonic function. Then

\begin{itemize}

\item[(i)] $\{\varphi_n\}_{n\in\NM}$ is an admissible cut-off sequence for $(H,\varphi)$ if and only if $\{\varphi_n\}_{n\in\NM}$ is an admissible cut-off sequence for $(H_h,\varphi)$.

\item[(ii)] $\{\varphi_n\}_{n\in\NM}$ is an admissible cut-off sequence for $(H,\varphi)$ if and only if $\{\frac{\varphi_n}{h}\}_{n\in\NM}$ is an admissible cut-off sequence for $(H_h,\frac{\varphi}{h})$.

\end{itemize}
\end{lemma}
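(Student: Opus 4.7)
The plan is to observe that conditions (i) and (ii) of Definition~\ref{Def:AdmSeq} depend only on the pair $(\{\varphi_n\},\varphi)$ and on the positivity of $h$, so they transfer trivially in both statements of the lemma; the real content is the weak Hardy inequality~\eqref{Eq:WH}, which I will handle uniformly by the surjective isometry $T_h\colon(\Dd(Q),\|\cdot\|_Q)\to(\Dd(Q_h),\|\cdot\|_{Q_h})$ noted just before Proposition~\ref{Prop:GrouStaTransf}, together with $\dmu=h^2\dm$. The core computation, used for both parts, is that for any $v\in\Dd(Q_h)$, writing $u:=hv\in\Dd(Q)$,
$$\int_\Omega|v|^2\,|\nabla(\varphi_n/\varphi)|_A^2\,\dmu=\int_\Omega|u|^2\,|\nabla(\varphi_n/\varphi)|_A^2\,\dm,\qquad \|v\|_{Q_h}=\|u\|_Q,$$
and that $v\mapsto u$ is a bijection between $\Dd(Q_h)$ and $\Dd(Q)$.

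For part (i), conditions (i)--(ii) of Definition~\ref{Def:AdmSeq} are literally the same in the two formulations since they mention neither the operator nor the measure. Continuity and compact support of $\varphi_n$ are preserved trivially, while the form-domain requirement reduces, via $T_h$, to the equivalence $\varphi_n\in\Dd(Q)\Leftrightarrow h\varphi_n\in\Dd(Q)$. I would prove this by fixing a cutoff $\chi\in\core$ equal to $1$ on $\supp(\varphi_n)$, approximating $\varphi_n$ by $\psi_k\in\core$ in $\|\cdot\|_Q$, multiplying by $\chi$ to gain uniform compact support inside $\supp(\chi)$ where $h$ and $h^{-1}$ are bounded, and then verifying that $h\chi\psi_k$ is Cauchy in $\|\cdot\|_Q$ with limit $h\varphi_n$. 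The weak Hardy inequality for $(H_h,\varphi)$ then follows from the displayed identity with the same constant $C$.

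For part (ii), the algebraic identity $(\varphi_n/h)/(\varphi/h)=\varphi_n/\varphi$ shows the gradient factor in the wH is unchanged, so the substitution above reduces the wH for $\{\varphi_n/h\}$ against $(H_h,\varphi/h)$ directly to the wH for $\{\varphi_n\}$ against $(H,\varphi)$. The bounds $0\le\varphi_n/h\le\varphi/h$ and the identity $\varphi_{n+1}/h=\varphi/h$ on $\supp(\varphi_n/h)=\supp(\varphi_n)$ are equivalent to their counterparts for $(\varphi_n,\varphi)$ because $h>0$, and $\varphi_n/h=T_h\varphi_n\in\Dd(Q_h)$ is a direct consequence of the isometry, requiring no extra technical work. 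The only mildly delicate point in the whole argument is therefore the domain verification in part (i) sketched above; part (ii) is a clean translation under $T_h$.
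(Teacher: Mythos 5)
Your proof is correct and takes essentially the same route as the paper: reduce the weak Hardy inequality via the isometry $T_h$ and the change of measure $\dmu=h^2\dm$ (with the algebraic cancellation $(\varphi_n/h)/(\varphi/h)=\varphi_n/\varphi$ for part (ii)), and observe that conditions (i)--(ii) of Definition~\ref{Def:AdmSeq} are unaffected.

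One small remark. You flag the form-domain membership ($\varphi_n\in\mathcal{D}(Q)\Leftrightarrow\varphi_n\in\mathcal{D}(Q_h)$) as the delicate point, and your sketch is morally right, but as written it has a hole: the functions $h\chi\psi_k$ are not smooth, so producing them as a Cauchy sequence in $\|\cdot\|_Q$ with limit $h\varphi_n$ does not by itself put $h\varphi_n$ into $\overline{\core}^{\|\cdot\|_Q}$. One either needs a second approximation of $h$ by smooth functions (the diagonal argument that is effectively carried out in Lemma~\ref{Lem:W12conv} and in the proof of Proposition~\ref{Prop:GrouStaTransf}(ii)), or one can sidestep the issue entirely by noting that any compactly supported $W^{1,2}$ function lies in both $\mathcal{D}(Q)$ and $\mathcal{D}(Q_h)$ (mollify on a fixed compact neighbourhood of the support; the $L^p_\loc$ hypothesis on $V$, $p>d/2$, and Sobolev embedding give convergence of the potential term). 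The paper itself glosses over this domain point, so you were right to notice it even though the fix is standard.
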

%%%%%%%%%%%%%%%%%%

%%%%%%%%%%%%%%%%%%
\begin{proof}
The constraints (i)-(ii) in Definition~\ref{Def:AdmSeq} are independent of the operator $H$. Thus, in order to show (i), it suffices to show that the weak Hardy inequality \eqref{Eq:WH} holds for $Q$ if and only if it holds for $Q_h$.  

\medskip

Let $v\in\mathcal{D}(Q)$. Since $T_h:\big(\Dd(Q),\|\cdot\|_Q\big)\to \big(\Dd(Q_h),\|\cdot\|_{Q_h}\big)$ is an isometry, $\|v\|_Q=\|T_hv\|_{Q_h}$. Also, notice that since $\mu=h^2\mathrm{d}m$,

$$\int_\Omega |T_h^{-1}v|^2\left|\nabla \left(\frac{\varphi_n}{\varphi}\right)\right|^2_A\,\mathrm{d}\mu=
\int_\Omega |v|^2\left|\nabla \left(\frac{\varphi_n}{\varphi}\right)\right|^2_A\,\mathrm{d}m.$$
Hence, the weak Hardy inequality \eqref{Eq:WH} holds for $Q$ if and only if it holds for $Q_h$. This proves (i). The statement in (ii) is a consequence of the general fact that if $\{\varphi_n\}_{n\in\NM}$ is an admissible cut-off sequence for $(H,\varphi)$, and if $h$ is any positive function in $C(\Omega)\cap W^{1,2}_{loc}(\Omega)$, then $\{\frac{\varphi_n}{h}\}_{n\in\NM}$ is an admissible cut-off sequence for $(H_h,\frac{\varphi}{h})$.
\end{proof}
%%%%%%%%%%%%%%%%%%

%%%%%%%%%%%%%%%%%%%%%%%%%%%%%%%%%%%%%%%%%%%%%%%%%%%%%%%%%%%%%%%%%%%%%%%%%%%%%%%%%%%%%
%%%%%%%%%%%%%%%%%%%%%%%%%%%%%%%%%%%%%%%%%%%%%%%%%%%%%%%%%%%%%%%%%%%%%%%%%%%%%%%%%%%%%
\section{Caccioppoli-type inequalities}
\label{Sec:Cacc}
%%%%%%%%%%%%%%%%%%%%%%%%%%%%%%%%%%%%%%%%%%%%%%%%%%%%%%%%%%%%%%%%%%%%%%%%%%%%%%%%%%%%%
%%%%%%%%%%%%%%%%%%%%%%%%%%%%%%%%%%%%%%%%%%%%%%%%%%%%%%%%%%%%%%%%%%%%%%%%%%%%%%%%%%%%%

This section is devoted to proving some Caccioppoli(-type) estimate, that will be one of the key ingredients in the proof of the main theorem. As has been seen in Section 2, Proposition \ref{Prop:GrouStaTransf}, we will have to consider the quadratic form

\begin{equation}\label{Eq:a}
\ab(v,w)
	:=\int_\Omega \langle A\nabla v,\nabla w \rangle\,d\mu
\end{equation}
The sesquilinear form $\ab$ is considered on $\Dd(\ab):=\overline{\core}^{\|\cdot\|_{\ab}}\subseteq L^2(\Omega,\dmu)$ where $A$ satisfies \eqref{Eq:A}. In Proposition \ref{Prop:GrouStaTransf}, the measure $\mu$ is given by $\mu=h^2\mathrm{d}m$, however in this section $\mu$ will denote an arbitrary measure that is absolutely continuous with respect to $\mathrm{d}x$. The self-adjoint operator associated to $\ab$ is denoted by $L$. For a bounded $W:\Gw\to\RM$, the operator $L+W$ is studied in this section. Denote by $q$ its associated sesquilinear form, namely $q(v,w):=\ab(v,w)+\langle Wv,w\rangle_{2,\mu}$. Since $W$ is real-valued and uniformly bounded, $q$ is symmetric and semibounded, i.e., $q(v)\geq -\|W\|_\infty \|v\|_{2,\mu}^2$. Then $\|v\|_q:=\left(q(v)+(1+\|W\|_\infty)\|v\|_{2,\mu}^2\right)^{1/2}$ is the corresponding $q$-norm, c.f. Section~\ref{Sec:Prel}.

\medskip

It is clear that the constant function equals to $1$ satisfies $L1\equiv0$. Throughout this section, $\{\varphi_n\}_{n\in\NM}$ denotes an admissible cut-off sequence for $(L,1)$ according to Definition~\ref{Def:AdmSeq}. Denote $A_n:=\supp\big(\varphi_{n+1}(1-\varphi_{n-1})\big)$. 

%%%%%%%%%%%%%%%%%%
\begin{lemma}\label{Lem:AdmSeq}
The following assertions hold for all $n\in\NM$:
\begin{itemize}
\item[(a)] $\varphi_{n+1}(1-\varphi_{n-1})\equiv 1$ on $\supp(\nabla \varphi_n)$,

\item[(b)] $\supp(\nabla \varphi_{n-1})\cap \supp(\nabla \varphi_{n+1})=\emptyset$.

\end{itemize}
\end{lemma}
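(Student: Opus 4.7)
The plan is to reduce both (a) and (b) to two clean inclusions about the supports, each of which follows quickly from the defining properties of an admissible cut-off sequence (with $\varphi=1$) and the continuity of the $\varphi_k$'s.

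\textbf{First inclusion:} $\supp(\nabla \varphi_k)\subseteq \supp(\varphi_k)$ for every $k$. If $x\notin \supp(\varphi_k)$, then $\varphi_k$ vanishes on a neighborhood of $x$, so its weak gradient vanishes a.e.\ there, giving $x\notin\supp(\nabla\varphi_k)$.

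\textbf{Second inclusion:} $\supp(\nabla \varphi_k)\subseteq \{\varphi_{k-1}=0\}$ for every $k$. Here I would use condition (ii) of Definition~\ref{Def:AdmSeq} applied to the index $k-1$: $\varphi_k(x)=1$ for every $x\in\supp(\varphi_{k-1})$, hence in particular on the open set $\{\varphi_{k-1}>0\}$ (open by the continuity of $\varphi_{k-1}$). On this open set the weak gradient $\nabla\varphi_k$ is zero a.e., so $\supp(\nabla\varphi_k)$ cannot meet $\{\varphi_{k-1}>0\}$; using continuity of $\varphi_{k-1}$ again to identify $\{\varphi_{k-1}>0\}$ with the complement of the closed set $\{\varphi_{k-1}=0\}$ yields the claim.

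\textbf{Deducing (a).} Let $x\in\supp(\nabla\varphi_n)$. By the first inclusion, $x\in\supp(\varphi_n)$, and condition (ii) gives $\varphi_{n+1}(x)=1$. By the second inclusion (applied with $k=n$), $\varphi_{n-1}(x)=0$. Multiplying yields $\varphi_{n+1}(x)\bigl(1-\varphi_{n-1}(x)\bigr)=1$.

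\textbf{Deducing (b).} By the first inclusion (with $k=n-1$), $\supp(\nabla\varphi_{n-1})\subseteq \supp(\varphi_{n-1})$; condition (ii) then forces $\varphi_n\equiv 1$ on this set, so $\supp(\nabla\varphi_{n-1})\subseteq\{\varphi_n>0\}$. By the second inclusion (with $k=n+1$), $\supp(\nabla\varphi_{n+1})\subseteq\{\varphi_n=0\}$. These two sets are manifestly disjoint.

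There is really no substantial obstacle here; the statement is a bookkeeping lemma about the supports built into Definition~\ref{Def:AdmSeq}. The only point requiring a sentence of care is the justification that $\nabla\varphi_k=0$ a.e.\ on $\{\varphi_{k-1}>0\}$ implies $\supp(\nabla\varphi_k)\cap\{\varphi_{k-1}>0\}=\emptyset$, which uses that $\{\varphi_{k-1}>0\}$ is open (provided by $\varphi_{k-1}\in C(\Omega)$) so that local a.e.\ vanishing can be promoted to vanishing of the support.
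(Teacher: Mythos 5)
Your proof is correct and fills in, with precision, exactly the bookkeeping that the paper dismisses as ``straightforward from condition (ii) of Definition~\ref{Def:AdmSeq}''. The two key inclusions you isolate ($\supp(\nabla\varphi_k)\subseteq\supp(\varphi_k)$ and $\supp(\nabla\varphi_k)\subseteq\{\varphi_{k-1}=0\}$) are the right ones, and your attention to the point that $\{\varphi_{k-1}>0\}$ must be \emph{open} (via $\varphi_{k-1}\in C(\Omega)$) for a.e.\ vanishing of the weak gradient to rule out support there is precisely the detail worth spelling out.
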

%%%%%%%%%%%%%%%%%%

%%%%%%%%%%%%%%%%%%
\begin{proof}
This is straightforward and follows from (ii) in Definition \ref{Def:AdmSeq}.
\end{proof}
%%%%%%%%%%%%%%%%%%

\medskip

%%%%%%%%%%%%%%%%%%
\begin{lemma}\label{Lem:AdSeq-a-q}
Let $\ab$ be the form defined in \eqref{Eq:a} with associated self-adjoint operator $L$. For a bounded $W:\Gw\to\RM$, consider the operator $L+W$ with form $q$ defined by $q(v,w):=\ab(v,w)+\langle Wv,w\rangle_{2,\mu}$. Then
%%%%%%%%%%%%%%%%%%
$$
\|v\|_\ab\leq\|v\|_q\leq \sqrt{(2+\|W\|_\infty)} \,\|v\|_\ab
$$ 
In particular, a sequence $\{\varphi_n\}_{n\in\mathbb{N}}$ is admissible for $(L,\varphi)$ if and only if it is admissible for $(L+W,\varphi)$.
%%%%%%%%%%%%%%%%%%

\end{lemma}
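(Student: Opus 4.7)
The plan is to split the statement into its two independent halves. The norm comparison is a direct computation, and once it is in hand, the admissibility assertion will follow because neither the qualitative conditions on $\{\varphi_n\}$ nor the weak Hardy inequality \eqref{Eq:WH} are sensitive to the precise choice between $\|\cdot\|_\ab$ and $\|\cdot\|_q$.

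For the norm equivalence, I will unpack the definitions to get
$$\|v\|_\ab^2 = \ab(v,v) + \|v\|_{2,\mu}^2, \qquad \|v\|_q^2 = \ab(v,v) + \int_\Omega W|v|^2\,d\mu + (1+\|W\|_\infty)\|v\|_{2,\mu}^2,$$
so that the difference reduces to $\int_\Omega W|v|^2\,d\mu + \|W\|_\infty\|v\|_{2,\mu}^2$. The elementary estimate $|\int_\Omega W|v|^2\,d\mu| \leq \|W\|_\infty\|v\|_{2,\mu}^2$ then pinches this quantity between $0$ and $2\|W\|_\infty\|v\|_{2,\mu}^2$. The lower bound of $0$ gives exactly $\|v\|_\ab\leq\|v\|_q$. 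For the upper bound, I fold the error $2\|W\|_\infty\|v\|_{2,\mu}^2$ into $\|v\|_\ab^2$ using the trivial inequality $\|v\|_{2,\mu}^2\leq\|v\|_\ab^2$, and rearrange to obtain the stated bound $\|v\|_q\leq\sqrt{2+\|W\|_\infty}\,\|v\|_\ab$.

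For the admissibility consequence, I will observe that conditions (i) and (ii) of Definition~\ref{Def:AdmSeq} are purely geometric and do not mention the operator, so they are automatically identical for $(L,\varphi)$ and $(L+W,\varphi)$. Only condition (iii), namely \eqref{Eq:WH}, involves the form-norm on its right-hand side. The norm equivalence just established immediately transports this inequality from $\|\cdot\|_\ab$ to $\|\cdot\|_q$ (at the price of replacing the constant $C$ by $(2+\|W\|_\infty)C$) and back. One should also note that the equivalence of $\|\cdot\|_\ab$ and $\|\cdot\|_q$ on the common core $\core$ forces the two completions $\Dd(\ab)$ and $\Dd(q)$ to coincide as subspaces of $L^2(\Omega,\dmu)$, so the requirement $\varphi_n\in C_0(\Omega)\cap\Dd(Q)$ in Definition~\ref{Def:AdmSeq} is insensitive to the switch from $L$ to $L+W$ as well.

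No analytical obstacle is expected; the only minor care is in the bookkeeping of constants so that the factor $\sqrt{2+\|W\|_\infty}$ appears cleanly on the upper side, but the essence of the argument is the one-line identity for $\|v\|_q^2-\|v\|_\ab^2$ together with boundedness of $W$.
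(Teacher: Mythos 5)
Your argument is correct in substance and follows the same route the paper intends; the paper's own proof is just the one-line remark that the norm equivalence ``follows by a short computation'' since $W$ is bounded, and you carry out exactly that computation. Your observation that (i) and (ii) of Definition~\ref{Def:AdmSeq} are operator-independent and that the domains $\Dd(\ab)$ and $\Dd(q)$ agree because the two norms are equivalent on the common core $\core$ is a useful piece of bookkeeping that the paper leaves implicit.

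One small but genuine slip: the estimate you actually derive is
\[
\|v\|_q^2 \;\leq\; \|v\|_\ab^2 + 2\|W\|_\infty\|v\|_{2,\mu}^2 \;\leq\; \bigl(1+2\|W\|_\infty\bigr)\|v\|_\ab^2,
\]
so your chain of inequalities yields the constant $\sqrt{1+2\|W\|_\infty}$, not $\sqrt{2+\|W\|_\infty}$. When $\|W\|_\infty>1$ these two constants are ordered the other way ($1+2\|W\|_\infty>2+\|W\|_\infty$), and one can check by taking $W$ constant and $v$ with $\ab(v,v)/\|v\|_{2,\mu}^2\to 0$ that the factor $\sqrt{1+2\|W\|_\infty}$ is in fact sharp — so the paper's displayed constant appears to be a typo, and your claim that a ``rearrangement'' produces $\sqrt{2+\|W\|_\infty}$ is not quite right. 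This does not affect the lemma's content, since only the \emph{equivalence} of norms (with some constant depending only on $\|W\|_\infty$) is used afterwards, but you should state the constant you actually prove.
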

%%%%%%%%%%%%%%%%%%

%%%%%%%%%%%%%%%%%%
\begin{proof}
Since $W$ is bounded, the desired estimate between $\|v\|_\ab$ and $\|v\|_q$ follows by a short computation. 
\end{proof}
%%%%%%%%%%%%%%%%%%

\medskip

For $a,b\in\RM$, we write $a\lesssim b$, if there is a constant $C>0$ such that $a\leq C\, b$. The following statement is a generalization of  \cite[Proposition~4.1]{BP17}.

%%%%%%%%%%%%%%%%%%
\begin{proposition}[Caccioppoli-type inequality I] 
\label{Prop:PointCaccio}
Let $W:\Gw\to\RM$ be bounded, and recall that $q$ is the quadratic form associated to $L+W$. Let $\{\varphi_n\}_{n\in\mathbb{N}}$ be an admissible cut-off sequence for $(L,1)$. Consider a generalized eigenfunction $u\in W^{1,2}_\loc(\Omega)$ of the operator $L+W$ with eigenvalue $\lambda\in\RM$. Then,
%%%%%%%%%%%%%%%%%%
$$
\int_{\Omega} |\varphi_{n+1}(1-\varphi_{n-1})|^2 |v|^2  |\nabla u|_A^2 \dmu \;
	\lesssim \; (2 + \sqrt{|\lambda| + \|W\|_\infty})^2\max_{x\in A_n}|u|^2
$$
%%%%%%%%%%%%%%%%%%
holds for every $v\in\Dd(q)$ satisfying $\|v\|_q\leq 1$.
\end{proposition}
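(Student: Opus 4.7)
The plan is a Caccioppoli-type computation driven by a carefully chosen test function. Set $\eta := \varphi_{n+1}(1-\varphi_{n-1})$, so $\eta\in C_0(\Omega)\cap\Dd(q)$ is supported in $A_n$ and satisfies $0\leq\eta\leq 1$. Since $u$ is a generalized eigenfunction of $L+W$, the identity $q(u,\phi) = \lambda\langle u,\phi\rangle_{2,\mu}$ holds for every $\phi\in\core$. The main technical obstacle is that the test function I want to use, namely $\phi := \eta^2|v|^2 u$, is not in $\core$ but only in $\Dd(q)$ with compact support; this is handled by approximating $v$ by a sequence in $\core$ converging in the $q$-norm (using that $\core$ is a core for $q$), invoking elliptic regularity to know that $u$ is locally bounded on $A_n$, and passing to the limit in the identity below.

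Substituting $\phi = \eta^2|v|^2 u$, expanding $\nabla\phi$ via the product rule, and exploiting $\langle A\nabla u,\nabla u\rangle = |\nabla u|_A^2$, yields the identity
\[
\int_\Omega \eta^2|v|^2|\nabla u|_A^2\dmu = \int_\Omega(\lambda - W)\,\eta^2|v|^2|u|^2\dmu - 2\int_\Omega \eta|v|^2 u\,\langle A\nabla u,\nabla\eta\rangle\dmu - \int_\Omega \eta^2 u\,\langle A\nabla u,\nabla|v|^2\rangle\dmu.
\]
Applying the pointwise bound $|\langle A\xi,\zeta\rangle|\leq|\xi|_A|\zeta|_A$ and weighted AM-GM ($2ab\leq\epsilon a^2+\epsilon^{-1}b^2$) with small $\epsilon$ (say $\epsilon=1/4$) to each cross term absorbs half of the left-hand side back, producing
\[
\int_\Omega \eta^2|v|^2|\nabla u|_A^2\dmu \lesssim (|\lambda|+\|W\|_\infty)\int_\Omega\eta^2|v|^2|u|^2\dmu + \int_\Omega|v|^2|u|^2|\nabla\eta|_A^2\dmu + \int_\Omega \eta^2|u|^2|\nabla v|_A^2\dmu.
\]

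It remains to control each of the three integrals on the right. Since $\eta^2$ and $|\nabla\eta|_A^2$ both vanish outside $A_n$, one extracts $\max_{A_n}|u|^2$, reducing matters to $O(1)$ bounds: $\int\eta^2|v|^2\dmu\leq\|v\|_{2,\mu}^2\leq 1$ and $\int|\nabla v|_A^2\dmu\lesssim\|v\|_q^2\leq 1$ (both immediate from the definition of $\|\cdot\|_q$ together with the boundedness of $W$), plus the weak-Hardy-type bound $\int|v|^2|\nabla\eta|_A^2\dmu\lesssim 1$. For this last one, write $\nabla\eta = (1-\varphi_{n-1})\nabla\varphi_{n+1} - \varphi_{n+1}\nabla\varphi_{n-1}$; by Lemma~\ref{Lem:AdmSeq}(b) the cross term vanishes, so $|\nabla\eta|_A^2 \leq |\nabla\varphi_{n+1}|_A^2 + |\nabla\varphi_{n-1}|_A^2$, and each summand is controlled by the weak Hardy inequality \eqref{Eq:WH} for $(L,1)$, which passes to $q$ (with a possibly larger constant) by Lemma~\ref{Lem:AdSeq-a-q}.

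Assembling the three estimates gives an upper bound of the form $(C_1 + C_2(|\lambda|+\|W\|_\infty))\max_{A_n}|u|^2$. Since $(2+\sqrt{|\lambda|+\|W\|_\infty})^2 = 4 + 4\sqrt{|\lambda|+\|W\|_\infty} + (|\lambda|+\|W\|_\infty) \asymp 1+|\lambda|+\|W\|_\infty$, this matches the claimed inequality up to absolute constants.
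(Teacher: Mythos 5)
Your proof is correct and follows essentially the same route as the paper: both test the weak eigenvalue equation against $\eta^2|v|^2u$ (the paper writes this as $\overline{\tilde{v}}\,\tilde{v}\,u$ with $\tilde{v}=\eta v$), expand via the product rule, pull out $\max_{A_n}|u|$, and control the $|\nabla\varphi_{n\pm 1}|_A$ terms via the weak Hardy inequality. The only divergence is cosmetic: you absorb the $|\nabla u|_A$ cross-terms using Young's inequality with a small parameter, whereas the paper keeps the resulting quadratic inequality $z^2\leq a + 2Cz\,b$ in $z=\bigl(\int\eta^2|v|^2|\nabla u|_A^2\dmu\bigr)^{1/2}$ and solves it; both yield the claimed bound up to absolute constants.
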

%%%%%%%%%%%%%%%%%%

%%%%%%%%%%%%%%%%%%
\begin{proof}
Let $v\in\core$ with $\|v\|_q\leq 1$. Define $\tilde{v}:= \varphi_{n+1}(1-\varphi_{n-1}) v$ and
%%%%%%%%%%%%%%%%%%
$$
z \;
	:= \; \sqrt{\int_\Omega |\tilde{v}|^2  |\nabla u|_A^2 \dmu}
	\,.
$$
%%%%%%%%%%%%%%%%%%
The constraint $\|v\|_q\leq 1$ implies $\|v\|_{2,\mu}\leq 1$ and $\ab(v,v)\leq 1$ since $\langle W \, v,v\rangle_{2,\mu} + \|W\|_\infty \|v\|_{2,\mu}^2\geq 0$. Thus, a short computation invoking the Cauchy-Schwarz inequality and the fact that $0\leq \varphi_n\leq 1$ yields
%%%%%%%%%%%%%%%%%%
\begin{align*}
z^2 \;
	&= \; \left|
			\int_{\Omega} \big\langle A\nabla \big(\overline{\tilde{v}}\, \tilde{v} \, u\big),\nabla u \big\rangle \dmu
			-
			\int_{\Omega} u \big\langle A\nabla \big(\overline{\tilde{v}}\, \tilde{v}\big),\nabla u \big\rangle \dmu
		\right|\\
	&\leq \;
		\bigg|
			\int_{\Omega}
				(\lambda  - W) \, |\tilde{v}|^2  |u|^2
			\dmu
		\bigg|
		+
		\left|
			\int_{\Omega}
				u\, \overline{\tilde{v}}  \langle A\nabla \tilde{v}, \nabla u\rangle
			\dmu
		\right|
		+
		\left|
			\int_{\Omega}
				u\, \tilde{v}  \langle A\nabla \overline{\tilde{v}}, \nabla u\rangle
			\dmu
		\right|\\
	&\leq \; (|\lambda| + \|W\|_\infty) \, 
			\int_{\Omega}
				|\tilde{v}|^2  |u|^2
			\dmu
		+
		2\, 
		\left(
			\int_{\Omega}
				|u|^2  |\nabla \tilde{v}|_A^2
			\dmu
		\right)^{\frac{1}{2}}
		\left(
			\int_{\Omega}
				|\tilde{v}|^2  |\nabla u|_A^2
			\dmu
		\right)^{\frac{1}{2}}\\
	&\leq (|\lambda| + \|W\|_\infty) \max_{A_n}|u|^2 \int_\Omega|v|^2\dmu
				+
				  2z\,\, 
		\left(
			\int_{\Omega}
				|u|^2  |\nabla \tilde{v}|_A^2
			\dmu
		\right)^{\frac{1}{2}}\\
	&\leq  \max_{A_n}|u|^2 (|\lambda| + \|W\|_\infty)
			+
			  2z\,\, \max_{A_n}|u|
		\left(
			\int_{\Omega}
			  |\nabla \tilde{v}|_A^2
			\dmu
		\right)^{\frac{1}{2}}\,.
\end{align*}
%%%%%%%%%%%%%%%%%%
Since
%%%%%%%%%%%%%%%%%%
$$\nabla \tilde{v}
	=(1-\varphi_{n-1})v\nabla \varphi_{n+1}-\varphi_{n+1}v\nabla \varphi_{n-1}+\varphi_{n+1}(1-\varphi_{n-1})\nabla v
$$
%%%%%%%%%%%%%%%%%%
and $0\leq \varphi_k\leq 1$, the estimate
%%%%%%%%%%%%%%%%%%
\begin{align*}
\left(\int_{\Omega} |\nabla \tilde{v}|_A^2 \dmu\right)^{1/2}
	\leq & \left(\int_\Omega |v|^2|\nabla \varphi_{n-1}|^2_A\dmu\right)^{1/2}+\left(\int_\Omega |v|^2|\nabla \varphi_{n+1}|^2_A\dmu\right)^{1/2}+\left(\int_\Omega |\nabla v|_A^2\dmu\right)^{1/2}\\
	\leq &  \left(\int_\Omega |v|^2|\nabla \varphi_{n-1}|^2_A\dmu\right)^{1/2}+\left(\int_\Omega |v|^2|\nabla \varphi_{n+1}|^2_A\dmu\right)^{1/2}+1
\end{align*}
%%%%%%%%%%%%%%%%%%
follows. By assumption, $\{\varphi_n\}_{n\in\NM}$ is an admissible cut-off sequence of $L$. Therefore the weak Hardy inequality leads to

$$\left(\int_\Omega |v|^2|\nabla \varphi_{n-1}|^2_A\dmu\right)^{1/2}+\left(\int_\Omega |v|^2|\nabla \varphi_{n+1}|^2_A\dmu\right)^{1/2}\lesssim  ||v||_{\ab}.$$
By Lemma \ref{Lem:AdSeq-a-q} and using that $||v||_q\leq 1$, we thus obtain the existence of a constant $C\geq 1$ (independent of $v$) satisfying
%%%%%%%%%%%%%%%%%%
$$\left(\int_{\Omega} |\nabla \tilde{v}|_A^2 \dmu\right)^{1/2}
	\leq C\,.
$$
%%%%%%%%%%%%%%%%%%
Therefore, 
%%%%%%%%%%%%%%%%%%
$$z^2\leq \max_{A_n}|u|^2 (|\lambda| + \|W\|_\infty) 
			+
			  2C \max_{A_n}|u|\, z\,.
$$
%%%%%%%%%%%%%%%%%%
A straightforward study of this quadratic inequality implies that
%%%%%%%%%%%%%%%%%%
$$z
	\;\leq\; (2C + \sqrt{|\lambda| + \|W\|_\infty})  \max_{A_n} |u|
	\;\lesssim\; (2 + \sqrt{|\lambda| + \|W\|_\infty})  \max_{A_n} |u|
$$
%%%%%%%%%%%%%%%%%%
proving the desired estimate for all $v\in\core$ with $\|v\|_q\leq 1$.

\vspace{.1cm}

Let $v\in\Dd(q)$ with $\|v\|_q\leq 1$. Since $\core$ is a core in the domain $\Dd(q)$, there is a sequence $v_n\in\core$ with $\|v_n\|_q\leq 1$ that converges to $v$ in the $q$-norm. Hence, $\{v_n\}_{n\in\NM}$ converges to $v$ in the $L^2$-norm and so there is no loss of generality in assuming that it converges $\dmu$-a.e. to $v$.  Combined with Fatou's Lemma, this yields
%%%%%%%%%%%%%%%%%%
$$
\int_{\Omega} |\varphi_{n+1}(1-\varphi_{n-1}) v|^2  |\nabla u|_A^2 \dmu \;
	\leq \liminf_{n\to\infty} \int_{\Omega} |\tilde{v}_n|^2  |\nabla u|_A^2 \dmu
	\lesssim \; (2+|\lambda|+\|W\|_\infty)^2 \max_{x\in A_n}|u|^2\,,
$$
%%%%%%%%%%%%%%%%%%
finishing the proof.
\end{proof}
%%%%%%%%%%%%%%%%%%

\medskip

A result analogous to the previous statement involving the $L^2$-norm of $u$ and not the pointwise one can be obtained with slightly different estimates.

%%%%%%%%%%%%%%%%%%
\begin{proposition}[Caccioppoli-type inequality II] \label{Prop:L2Caccio}

Let $W:\Gw\to\RM$ be bounded, and recall that $q$ is the quadratic form associated to $L+W$. Let $\{\varphi_n\}_{n\in\mathbb{N}}$ be an admissible cut-off sequence for $(L,1)$. Consider a generalized eigenfunction $u\in W^{1,2}_\loc(\Omega)$ of the operator $L+W$ with eigenvalue $\lambda\in\RM$. Then,
%%%%%%%%%%%%%%%%%%
$$
\int_{\Omega} |\varphi_{n+1}(1-\varphi_{n-1})|^2  |\nabla u|_A^2 \dmu \;
	\lesssim \; \|u\|_{L^2(A_n,\mu)}+
	\left(\int_\Omega |u|^2|\nabla \varphi_{n-1}|_A^2\dmu
		+\int_\Omega |u|^2|\nabla \varphi_{n+1}|_A^2\dmu
	\right)^{1/2}
$$
holds for all $n\in\NM$ where the constant in the estimate depends on $\|W\|_\infty$ and $\lambda$.

\end{proposition}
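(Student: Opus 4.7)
The plan is to adapt the argument of Proposition \ref{Prop:PointCaccio} by taking $u$ itself in place of the auxiliary $q$-bounded test function $v$. Write $\tilde v:=\varphi_{n+1}(1-\varphi_{n-1})$: by construction $\tilde v$ is supported in $A_n$, satisfies $0\leq\tilde v\leq 1$, and (because $\varphi_{n\pm 1}\in C_0(\Omega)\cap\Dd(Q)$) lies in $\Dd(q)$ with compact support. Since $u\in W^{1,2}_{\loc}(\Omega)$, the product $\tilde v^{2}\bar u$ is a compactly supported $W^{1,2}$ function, hence a legitimate test function in the weak form of the eigenvalue equation $q(u,\phi)=\lambda\langle u,\phi\rangle_{2,\mu}$ (obtained by density of $\core$ in $\Dd(q)$).

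Testing against $\phi=\tilde v^{2}\bar u$ and expanding $\nabla(\tilde v^{2}\bar u)=2\tilde v\,\bar u\,\nabla\tilde v+\tilde v^{2}\nabla\bar u$, I obtain, after rearranging,
$$
\int_\Omega \tilde v^{2}|\nabla u|_A^{2}\dmu
= \int_\Omega (\lambda-W)|u|^{2}\tilde v^{2}\dmu
- 2\int_\Omega \bar u\,\tilde v\,\langle A\nabla u,\nabla\tilde v\rangle\dmu.
$$
Letting $z:=\bigl(\int_\Omega \tilde v^{2}|\nabla u|_A^{2}\dmu\bigr)^{1/2}$ and applying Cauchy--Schwarz together with $|\tilde v|\leq\mathbf{1}_{A_n}$ yields the quadratic inequality
$$
z^{2}\leq(|\lambda|+\|W\|_\infty)\,\|u\|_{L^{2}(A_n,\mu)}^{2}
+2z\,\Bigl(\int_\Omega|u|^{2}|\nabla\tilde v|_A^{2}\dmu\Bigr)^{1/2}.
$$
Since $0\leq\varphi_{n\pm 1}\leq 1$, the Leibniz rule gives the pointwise bound $|\nabla\tilde v|_A\leq|\nabla\varphi_{n+1}|_A+|\nabla\varphi_{n-1}|_A$, whence
$$
\int_\Omega|u|^{2}|\nabla\tilde v|_A^{2}\dmu
\leq 2\int_\Omega|u|^{2}\bigl(|\nabla\varphi_{n-1}|_A^{2}+|\nabla\varphi_{n+1}|_A^{2}\bigr)\dmu.
$$

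Solving $z^{2}\leq 2zM+N^{2}$ with $M,N\geq 0$ gives $z\leq 2M+N$, and substituting produces
$$
z\lesssim\sqrt{|\lambda|+\|W\|_\infty}\,\|u\|_{L^{2}(A_n,\mu)}
+\Bigl(\int_\Omega|u|^{2}(|\nabla\varphi_{n-1}|_A^{2}+|\nabla\varphi_{n+1}|_A^{2})\dmu\Bigr)^{1/2},
$$
which, after absorbing $\sqrt{|\lambda|+\|W\|_\infty}$ into the implicit constant (allowed to depend on $\lambda$ and $\|W\|_\infty$), yields the desired estimate. The only real obstacle is justifying $\tilde v^{2}\bar u$ as a test function: $\tilde v$ is merely in $C_0(\Omega)\cap\Dd(Q)$, not smooth, so one must approximate it by a sequence in $\core$ in the $Q$-norm and use the local uniform ellipticity of $A$ together with $u\in W^{1,2}_{\loc}(\Omega)$ to pass to the limit in both sides of the weak equation; apart from that, the proof parallels (and is in fact slightly simpler than) that of Proposition \ref{Prop:PointCaccio}, since no form-bounded auxiliary weight has to be tracked through the estimate.
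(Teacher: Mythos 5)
Your proof is correct and follows essentially the same route as the paper's: both proceed by testing the weak eigenvalue identity against (the real-valued) $\psi^2 u$ with $\psi=\varphi_{n+1}(1-\varphi_{n-1})$, obtain the quadratic inequality $z^2\leq (|\lambda|+\|W\|_\infty)\|u\|_{L^2(A_n,\mu)}^2+2zM$ with $M=\bigl(\int_\Omega|u|^2|\nabla\psi|_A^2\,d\mu\bigr)^{1/2}$, and solve it. The only cosmetic differences are (a) the paper invokes Lemma~\ref{Lem:AdmSeq}(b) to note that $\nabla\varphi_{n-1}$ and $\nabla\varphi_{n+1}$ have disjoint supports, which yields $|\nabla\psi|_A^2\leq|\nabla\varphi_{n-1}|_A^2+|\nabla\varphi_{n+1}|_A^2$ with no factor $2$, whereas you use the triangle inequality and pick up an immaterial factor $2$; and (b) the paper simply references the computation of Proposition~\ref{Prop:PointCaccio}, while you carry it out explicitly, also (rightly) flagging the small approximation argument needed because $\psi$ is only in $C_0(\Omega)\cap\Dd(Q)$ rather than smooth. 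One caveat worth stating explicitly: the inequality you actually derive (and which the paper actually uses in the proof of Lemma~\ref{Lem:Main(ii)}) is $z\lesssim \|u\|_{L^2(A_n,\mu)}+M$, i.e.\ there is a square root on the left-hand side; the left-hand side as printed in the proposition is $z^2$, which is a typo in the paper, so you should correct the statement rather than claim literally to have proved it.
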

%%%%%%%%%%%%%%%%%%

%%%%%%%%%%%%%%%%%%
\begin{proof}
Let $\psi:=\varphi_{n+1}(1-\varphi_{n-1})\in\core$ and $z:=\sqrt{\int_{\Omega} |\psi|^2  |\nabla u|_A^2 \dmu}$. A similar computation as in the proof of Proposition~\ref{Prop:PointCaccio} leads to
%%%%%%%%%%%%%%%%%%
\begin{align*}
z^2 &
	\leq (|\lambda|+\|W\|_\infty)\int_\Omega |\psi|^2 |u|^2\dmu
		+ 2z \left(\int_\Omega |u|^2 |\nabla\psi|_A^2\dmu \right)^{1/2} \\
	&\lesssim \|u\|_{L^2(A_n,\mu)}^2+2z\left(\int_\Omega |u|^2 |\nabla\psi|_A^2\dmu \right)^{1/2}\,,
\end{align*}
%%%%%%%%%%%%%%%%%%
where the constant in the latter estimate depends on $\lambda$ and $\|W\|_\infty$.
Since $\{\varphi_k\}_{k\in\NM}$ is an admissible sequence for $(L,1)$, $\nabla\varphi_{n+1}$ and $\nabla \varphi_{n-1}$ have disjoint support by Lemma~\ref{Lem:AdmSeq}. Thus, $\nabla \psi=(1-\varphi_{n-1})\nabla \varphi_{n+1}-\varphi_{n+1}\nabla \varphi_{n-1}$ and $0\leq \varphi_k\leq 1$ yield
%%%%%%%%%%%%%%%%%%
$$|\nabla \psi|_A^2
	\leq  |\nabla \varphi_{n-1}|_A^2+|\nabla \varphi_{n+1}|_A^2\,.
$$
%%%%%%%%%%%%%%%%%%
Hence, we arrive to the quadratic inequality
%%%%%%%%%%%%%%%%%%
$$z^2
	\lesssim \|u\|_{L^2(A_n,\mu)}^2
		+ 2z 
		\left(\int_\Omega |u|^2|\nabla \varphi_{n-1}|_A^2\dmu
			+ \int_\Omega |u|^2|\nabla \varphi_{n+1}|_A^2\dmu
		\right)^{1/2}\,.
$$
%%%%%%%%%%%%%%%%%%
This leads immediately to the desired estimate.
\end{proof}
%%%%%%%%%%%%%%%%%%

%%%%%%%%%%%%%%%%%%%%%%%%%%%%%%%%%%%%%%%%%%%%%%%%%%%%%%%%%%%%%%%%%%%%%%%%%%%%%%%%%%%%%
%%%%%%%%%%%%%%%%%%%%%%%%%%%%%%%%%%%%%%%%%%%%%%%%%%%%%%%%%%%%%%%%%%%%%%%%%%%%%%%%%%%%%
\section{Proof of the main result}
\label{Sec:PfMain}
%%%%%%%%%%%%%%%%%%%%%%%%%%%%%%%%%%%%%%%%%%%%%%%%%%%%%%%%%%%%%%%%%%%%%%%%%%%%%%%%%%%%%
%%%%%%%%%%%%%%%%%%%%%%%%%%%%%%%%%%%%%%%%%%%%%%%%%%%%%%%%%%%%%%%%%%%%%%%%%%%%%%%%%%%%%

Similarly to Section~\ref{Sec:Cacc}, given a measure $\mu$ on $\Omega$, which is absolutely continuous with respect to $dx$, the sesquilinear form 
%%%%%%%%%%%%%%%%%%
$$\ab(v,w)
	:=\int_\Omega \langle A\nabla v,\nabla w \rangle\,d\mu
$$
%%%%%%%%%%%%%%%%%%
is considered on $\Dd(\ab):=\overline{\core}^{\|\cdot\|_{\ab}}$. The associated self-adjoint operator is denoted by $L$. Furthermore, $q$ denotes the sesquilinear form of the operator $L+W$ for a bounded $W:\Gw\to\RM$. 

%%%%%%%%%%%%%%%%%%
\begin{lemma}\label{Lem:Main(i)}
Let $u\in W^{1,2}_{\loc}(\Omega)$ be a generalized eigenfunction of the operator $L+W$ with eigenvalue $\lambda\in \RM$. Let $\{\varphi_n\}_{n\in\mathbb{N}}$ be an admissible cut-off sequence for $(L,1)$. Recall that $A_n=\mathrm{supp}(\varphi_{n+1}(1-\varphi_{n-1}))$. If 
%%%%%%%%%%%%%%%%%%
$$\liminf_{n\to\infty} \frac{\max_{A_n}\left|u\right|}{\|\varphi_n u\|_{2,\mu}}\;\mathbf{a}(\varphi_n,\varphi_n)^{1/2}=0\,,
$$
%%%%%%%%%%%%%%%%%%
then  $\lambda\in\sigma(L+W)$.
\end{lemma}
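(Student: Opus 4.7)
\textbf{Proof plan for Lemma \ref{Lem:Main(i)}.}

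The plan is to construct a generalized Weyl sequence for the form $q$ at energy $\lambda$ and apply Proposition~\ref{Prop:CritWeyl}. The natural candidate is
$$\psi_n \;:=\; \frac{\varphi_n u}{\|\varphi_n u\|_{2,\mu}},$$
which has unit $L^2(\mu)$-norm by construction, and lies in $\mathcal{D}(q)$ because $\varphi_n\in C_0(\Omega)\cap\mathcal{D}(q)$ (here using Lemma~\ref{Lem:AdSeq-a-q}) and $u\in W^{1,2}_{\loc}(\Omega)$ is continuous, so $\varphi_n u$ is a $W^{1,2}$ function with compact support, which can be approximated in the $q$-norm by $\core$ functions via a mollification of $u$ combined with an approximation of $\varphi_n$ by compactly supported smooth functions.

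The core computation is to derive, for $v\in\core$, the identity
$$q(\varphi_n u, v) - \lambda\,\langle \varphi_n u, v\rangle_{2,\mu} \;=\; \int_\Omega u\,\langle A\nabla \varphi_n, \nabla v\rangle\, d\mu \;-\; \int_\Omega v\,\langle A\nabla u, \nabla \varphi_n\rangle\, d\mu,$$
which follows by expanding $\nabla(\varphi_n u)=u\nabla\varphi_n+\varphi_n\nabla u$ and then applying the weak eigenfunction equation $q(u,\varphi_n v)=\lambda\langle u,\varphi_n v\rangle_{2,\mu}$ (valid since $\varphi_n v\in\core$) to substitute for the term $\int \varphi_n \langle A\nabla u,\nabla v\rangle\,d\mu$. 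Denote the two resulting terms by $I_1$ and $I_2$; by density of $\core$ in $\mathcal{D}(q)$ both sides extend to $v\in\mathcal{D}(q)$.

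To bound $I_1$, observe that by Lemma~\ref{Lem:AdmSeq}(a), $\supp(\nabla\varphi_n)\subseteq A_n$, so Cauchy--Schwarz together with Lemma~\ref{Lem:AdSeq-a-q} yields
$$|I_1| \;\leq\; \max_{A_n}|u| \cdot \mathbf{a}(\varphi_n,\varphi_n)^{1/2}\cdot \mathbf{a}(v,v)^{1/2} \;\lesssim\; \max_{A_n}|u|\cdot \mathbf{a}(\varphi_n,\varphi_n)^{1/2}\cdot \|v\|_q.$$
For $I_2$, the key point is that the factor $|\nabla u|$ is not controlled a priori in terms of $\|v\|_q$; here the Caccioppoli-type estimate of Proposition~\ref{Prop:PointCaccio} enters crucially. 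Since $\varphi_{n+1}(1-\varphi_{n-1})\equiv 1$ on $\supp(\nabla\varphi_n)$, I rewrite
$$I_2 \;=\; \int_\Omega \varphi_{n+1}(1-\varphi_{n-1})\,v\,\langle A\nabla u,\nabla\varphi_n\rangle\, d\mu,$$
and then Cauchy--Schwarz factors this as
$$|I_2|^2 \;\leq\; \left(\int_\Omega |\varphi_{n+1}(1-\varphi_{n-1})|^2|v|^2|\nabla u|_A^2\,d\mu\right) \cdot \mathbf{a}(\varphi_n,\varphi_n).$$
For $\|v\|_q\leq 1$, the first factor is bounded by a constant multiple of $\max_{A_n}|u|^2$ thanks to Proposition~\ref{Prop:PointCaccio} (the constant depending only on $\lambda$ and $\|W\|_\infty$), giving $|I_2|\lesssim \max_{A_n}|u|\cdot\mathbf{a}(\varphi_n,\varphi_n)^{1/2}$.

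Combining the two estimates and dividing by $\|\varphi_n u\|_{2,\mu}$, one obtains
$$\sup_{\|v\|_q\leq 1}\bigl|q(\psi_n,v)-\lambda\langle\psi_n,v\rangle_{2,\mu}\bigr| \;\lesssim\; \frac{\max_{A_n}|u|}{\|\varphi_n u\|_{2,\mu}}\,\mathbf{a}(\varphi_n,\varphi_n)^{1/2}.$$
The hypothesis forces the $\liminf$ of the right-hand side to vanish; combined with $\|\psi_n\|_{2,\mu}=1$, Proposition~\ref{Prop:CritWeyl} then concludes $\lambda\in\sigma(L+W)$. The main technical obstacle is the treatment of $I_2$: the gradient of $u$ has no global $L^2$ control, and the whole point of Proposition~\ref{Prop:PointCaccio} is precisely to convert the local $\nabla u$-estimate on $A_n$ into something usable against the test function $v$, so it is indispensable here.
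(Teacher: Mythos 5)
Your proposal is correct and follows essentially the same route as the paper's proof: construct $\psi_n=\varphi_n u/\|\varphi_n u\|_{2,\mu}$, use the weak eigenfunction equation to reduce $q(\psi_n,v)-\lambda\langle\psi_n,v\rangle_{2,\mu}$ to the commutator $\ab(\varphi_n u,v)-\ab(u,\varphi_n v)$, expand by Leibniz, bound the $\nabla\varphi_n\cdot\nabla v$ term by Cauchy--Schwarz and $\ab(v,v)\le\|v\|_q^2$, bound the $\nabla u\cdot\nabla\varphi_n$ term by inserting $\varphi_{n+1}(1-\varphi_{n-1})\equiv 1$ on $\supp(\nabla\varphi_n)$ and invoking the pointwise Caccioppoli estimate of Proposition~\ref{Prop:PointCaccio}, and conclude by Proposition~\ref{Prop:CritWeyl}. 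The only minor difference is that the paper does not explicitly comment on the fact that $\varphi_n u\in\mathcal{D}(q)$, whereas you (correctly) flag it; otherwise the decomposition, the key lemma, and the final estimate are identical.
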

%%%%%%%%%%%%%%%%%%

%%%%%%%%%%%%%%%%%%
\begin{proof}
%\red{THERE IS NO NEED FOR THE FOLLOWING PART (I JUST NEEDED IT WITH YEHUDA FOR THE POINTWISE ESTIMATES}
%\blue{Without loss of generality $u\not\in L^2(\Omega,\dmu)$. Thus, $\lim_{n\to\infty}\|\varphi_n u\|_{2,\mu}=\infty$ follows (by Fatou's Lemma, since $\{\varphi_n\}_{n\in\NM}$ converges pointwise to one). Thus, we can assume without loss of generality that $\|\varphi_n u\|_{2,\mu}\geq 1$.}
Define $w_n:=\frac{\varphi_n u}{\|\varphi_n u\|_{2,\mu}}$ for $n\in\NM$. Let $v\in\Dd(q)$ be so that $\|v\|_q\leq 1$. Since $u$ is a generalized eigenfunction of $L+W$, we have
%%%%%%%%%%%%%%%%%%
$$
\lambda  \langle w_n,v\rangle
    = \frac{\lambda}{\|\varphi_n u\|_{2,\mu}}  \langle u,\varphi_n v\rangle
	=  \frac{1}{\|\varphi_n u\|_{2,\mu}} \, q(u,\varphi_n v) \;
	=  \frac{1}{\|\varphi_n u\|_{2,\mu}} \, \ab(u,\varphi_n v) + \langle Ww_n,v\rangle\,.
$$
%%%%%%%%%%%%%%%%%%
Hence,  $\big|q(w_n,v) - \lambda  \langle w_n,v\rangle\big| = \frac{1}{\|\varphi_n u\|_{2,\mu}} \big|\ab(\varphi_n u,v) - \ab(u,\varphi_n v)\big|$. Therefore, the Leibniz rule implies
%%%%%%%%%%%%%%%%%%
$$
\Big|
	q(w_n,v) - \lambda  \langle w_n,v\rangle
\Big|\;
	=\; \frac{1}{\|\varphi_n u\|_{2,\mu}}
		\left|
			\int_{\Omega} u \langle A\nabla\varphi_n, \nabla v \rangle \dmu
			-
			\int_{\Omega} \bar{v} \langle A\nabla u, \nabla \varphi_n \rangle \dmu
		\right|\,.
$$
%%%%%%%%%%%%%%%%%%
Note that
%%%%%%%%%%%%%%%%%%
$$
\ab(v,v)+||v||_{2,\mu}^2\leq ||v||^2_q\leq 1.
$$
%%%%%%%%%%%%%%%%%%
Therefore, the above quantity is estimated by
%%%%%%%%%%%%%%%%%%
\begin{align*}
&\frac{1}{\|\varphi_n u\|_{2,\mu}} \left[
	\left(
		\int_{\Omega}
			|u|^2 |\nabla\varphi_n|_A^2
		\dmu
	\right)^{\frac{1}{2}}
	\ab(v,v)^{\frac{1}{2}}
	\!\!+
	\!\left(
		\int_{\mathrm{supp}(\nabla \varphi_n)}\!\!\!
			|v|^2 |\nabla u|_A^2
		\dmu
	\right)^{\frac{1}{2}}
	\ab(\varphi_n,\varphi_n)^{\frac{1}{2}}
\right]\\[2mm]
\leq & \frac{1}{\|\varphi_n u\|_{2,\mu}}  \left[
	\!\!\left(
		\int_{\Omega}
			|u|^2 |\nabla\varphi_n|_A^2
		\dmu\!\!
	\right)^{\frac{1}{2}}
	\!+
	\!\!\left(\!
		\int_{\Omega}
			|\varphi_{n+1}(1-\varphi_{n-1})|^2\, |v|^2  |\nabla u|_A^2
		\dmu
	\right)^{\frac{1}{2}}
\!\!\!	\ab(\varphi_n,\varphi_n)^{\frac{1}{2}}
\!\right]\,,
\end{align*}
%%%%%%%%%%%%%%%%%%
since $\varphi_{n+1}(1-\varphi_{n-1})=1$ on $\mathrm{supp}(\nabla \varphi_n)$ by Lemma~\ref{Lem:AdmSeq}. Moreover, this yields $\supp(\nabla\varphi_n)\subseteq A_n$ implying
%%%%%%%%%%%%%%%%%%
$$\int_{\Omega}
			|u|^2 |\nabla\varphi_n|_A^2
		\dmu
	\leq \max_{A_n}|u|^2\; \mathbf{a}(\varphi_n,\varphi_n)\,.
$$
%%%%%%%%%%%%%%%%%%
According to Proposition~\ref{Prop:PointCaccio}, 
%%%%%%%%%%%%%%%%%%
$$\int_{\Omega}
			|\varphi_{n+1}(1-\varphi_{n-1})|^2\, |v|^2  |\nabla u|_A^2
		\dmu 
		\lesssim (2+\sqrt{|\lambda|+\|W\|_\infty})^2\; \max_{A_n}|u|^2
$$
%%%%%%%%%%%%%%%%%%
follows for all $v\in\Dd(q)$ with $\|v\|_q\leq 1$. Hence, the previous considerations lead to
%%%%%%%%%%%%%%%%%%
$$\Big|
	q(w_n,v) - \lambda  \langle w_n,v\rangle
\Big|\; 
	\lesssim \frac{\max_{A_n}|u|}{\|\varphi_n u\|_{2,\mu}}\;\mathbf{a}(\varphi_n,\varphi_n)^{1/2}\,,
$$
%%%%%%%%%%%%%%%%%%
where the constant in the estimate depends on $\lambda$ and $\|W\|_\infty$. Using the hypothesis, one concludes that
%%%%%%%%%%%%%%%%%%
$$\liminf_{n\to\infty}\sup_{\|v\|_q\leq 1}\Big|
	q(w_n,v) - \lambda  \langle w_n,v\rangle
\Big|=0\,,
$$
%%%%%%%%%%%%%%%%%%
implying $\lambda\in\sigma(H)$ by Proposition~\ref{Prop:CritWeyl}.
\end{proof}
%%%%%%%%%%%%%%%%%%

%%%%%%%%%%%%%%%%%%
\begin{lemma}\label{Lem:Main(ii)}
Let $u\in W^{1,2}_{\loc}(\Omega)$ be a generalized eigenfunction of the operator $L+W$ with eigenvalue $\lambda\in \RM$. Let $\{\varphi_n\}_{n\in\mathbb{N}}$ be an admissible cut-off sequence for $(L,1)$. Recall that $A_n=\mathrm{supp}(\varphi_{n+1}(1-\varphi_{n-1}))$. If 
%%%%%%%%%%%%%%%%%%
$$\liminf_{n\to\infty}\frac{\|u\|_{L^2(A_n,\mu)}+\sqrt{\int_\Omega |u|^2(|\nabla \varphi_{n-1}|^2_A+|\nabla \varphi_{n}|_A^2+|\nabla \varphi_{n+1}|_A^2)\,\dmu}}{\|\varphi_n u\|_{2,\mu}}=0\,,
$$
%%%%%%%%%%%%%%%%%%
then  $\lambda\in\sigma(L+W)$.
\end{lemma}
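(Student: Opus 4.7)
The plan is to mirror the argument of Lemma~\ref{Lem:Main(i)}, verifying the generalized Weyl criterion of Proposition~\ref{Prop:CritWeyl} for the sequence $w_n := \varphi_n u/\|\varphi_n u\|_{2,\mu}$ (which has $\|w_n\|_{2,\mu}=1$), but this time invoking the $L^2$-Caccioppoli estimate of Proposition~\ref{Prop:L2Caccio} instead of the pointwise one. The starting point is the same: using that $u$ is a generalized eigenfunction, namely $q(u,\varphi_n v)=\lambda\langle u,\varphi_n v\rangle_{2,\mu}$ for $v\in\Dd(q)$ with $\|v\|_q\leq 1$, together with the Leibniz rule, one reduces the problem to estimating
\[|q(w_n,v)-\lambda\langle w_n,v\rangle_{2,\mu}| = \frac{1}{\|\varphi_n u\|_{2,\mu}}\left|\int_\Omega u\langle A\nabla\varphi_n,\nabla v\rangle\dmu - \int_\Omega \bar v\langle A\nabla u,\nabla\varphi_n\rangle\dmu\right|.\]

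The first term on the right is straightforward: by Cauchy--Schwarz and the inequality $\ab(v,v)^{1/2}\leq\|v\|_q\leq 1$, it is bounded by $\left(\int_\Omega|u|^2|\nabla\varphi_n|_A^2\dmu\right)^{1/2}$, which is exactly one of the pieces appearing in the numerator of the hypothesis.

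The second term is the crux, and its treatment is the main obstacle: unlike its pointwise counterpart, Proposition~\ref{Prop:L2Caccio} does not produce a $|v|^2$-weighted bound on $|\nabla u|_A^2$, so we cannot simply repeat the argument of Lemma~\ref{Lem:Main(i)}. The key idea is to split the Cauchy--Schwarz differently, namely
\[\int_\Omega |\bar v||\nabla u|_A|\nabla\varphi_n|_A \dmu \leq \left(\int_\Omega |v|^2|\nabla\varphi_n|_A^2 \dmu\right)^{1/2}\left(\int_{\supp(\nabla\varphi_n)}|\nabla u|_A^2\dmu\right)^{1/2},\]
and to push the $|v|^2$ factor onto $|\nabla\varphi_n|_A^2$. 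The first factor is then controlled by a universal constant times $\|v\|_q\leq 1$ via the weak Hardy inequality from Definition~\ref{Def:AdmSeq}(iii) (with $\varphi\equiv 1$), combined with the equivalence of the $q$- and $\ab$-norms provided by Lemma~\ref{Lem:AdSeq-a-q}. For the second factor, Lemma~\ref{Lem:AdmSeq}(a) gives $\varphi_{n+1}(1-\varphi_{n-1})\equiv 1$ on $\supp(\nabla\varphi_n)$, so it is bounded by $\left(\int_\Omega|\varphi_{n+1}(1-\varphi_{n-1})|^2|\nabla u|_A^2\dmu\right)^{1/2}$; an application of Proposition~\ref{Prop:L2Caccio} then yields a constant (depending on $\lambda$ and $\|W\|_\infty$) times $\|u\|_{L^2(A_n,\mu)}+\left(\int_\Omega|u|^2(|\nabla\varphi_{n-1}|_A^2+|\nabla\varphi_{n+1}|_A^2)\dmu\right)^{1/2}$.

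Collecting the two estimates will produce
\[\sup_{\|v\|_q\leq 1}\bigl|q(w_n,v)-\lambda\langle w_n,v\rangle_{2,\mu}\bigr| \;\lesssim\; \frac{\|u\|_{L^2(A_n,\mu)}+\sqrt{\int_\Omega|u|^2(|\nabla\varphi_{n-1}|_A^2+|\nabla\varphi_n|_A^2+|\nabla\varphi_{n+1}|_A^2)\dmu}}{\|\varphi_n u\|_{2,\mu}},\]
so that the stated hypothesis forces the $\liminf$ as $n\to\infty$ to vanish. Since $\|w_n\|_{2,\mu}=1$, Proposition~\ref{Prop:CritWeyl} then concludes that $\lambda\in\sigma(L+W)$.
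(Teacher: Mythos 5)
Your proof is correct, and it takes a genuinely simpler route than the paper. Both proofs reduce, via the same Weyl-sequence setup and Leibniz rule, to estimating the two integrals $\int_\Omega u\langle A\nabla\varphi_n,\nabla v\rangle\dmu$ and $\int_\Omega \bar{v}\langle A\nabla u,\nabla\varphi_n\rangle\dmu$, and the first is handled identically. For the second, the paper substitutes $\tilde v=\varphi_{n+1}^2(1-\varphi_{n-1})^2v$ (which agrees with $v$ on $\supp(\nabla\varphi_n)$) and then integrates by parts using the eigenfunction equation, producing four summands that are each bounded with Proposition~\ref{Prop:L2Caccio}, the weak Hardy inequality, and Cauchy--Schwarz. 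You avoid that integration by parts entirely: by flipping the Cauchy--Schwarz split — pairing $|v|$ with $|\nabla\varphi_n|_A$ instead of with $|\nabla u|_A$ — the $v$-dependence is absorbed directly by the weak Hardy inequality of Definition~\ref{Def:AdmSeq}(iii) together with Lemma~\ref{Lem:AdSeq-a-q}, while the remaining $v$-free factor $\bigl(\int_{\supp(\nabla\varphi_n)}|\nabla u|_A^2\dmu\bigr)^{1/2}$ is handled in one stroke by Lemma~\ref{Lem:AdmSeq}(a) and Proposition~\ref{Prop:L2Caccio}. Your route is shorter; the paper's four-term decomposition is perhaps chosen to stay structurally parallel with Lemma~\ref{Lem:Main(i)}, where the pointwise Caccioppoli inequality forces the $|v|^2$ weight onto $|\nabla u|_A^2$. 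One small point worth flagging: as printed, Proposition~\ref{Prop:L2Caccio} bounds $\int_\Omega|\varphi_{n+1}(1-\varphi_{n-1})|^2|\nabla u|_A^2\dmu$ itself (rather than its square root) by $\|u\|_{L^2(A_n,\mu)}+(\cdots)^{1/2}$; the proof of that proposition actually yields the bound on the square root, and both the paper's own application and yours use it in this corrected form, so you are in good company.
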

%%%%%%%%%%%%%%%%%%

%%%%%%%%%%%%%%%%%%
\begin{proof}
%Up to extracting a subsequence, one can and will assume in the assumption that the limit, and not only the liminf, is equal to zero. 
Define the sequence
%%%%%%%%%%%%%%%%%%
$$
a_n:=\frac{\|u\|_{L^2(A_n,\mu)}+\sqrt{\int_\Omega |u|^2(|\nabla \varphi_{n-1}|^2_A+|\nabla \varphi_{n}|_A^2+|\nabla \varphi_{n+1}|_A^2)\,\dmu}}{\|\varphi_n u\|_{2,\mu}}\,.
$$
%%%%%%%%%%%%%%%%%%
%Let $v\in\mathcal{D}(q)$ such that $\|v\|_q\leq 1$. 
Following the computations in the proof of Lemma~\ref{Lem:Main(i)}, we get
%%%%%%%%%%%%%%%%%%
$$
\sup_{\|v\|_q\leq 1}
\Big|
	q(w_n,v) - \lambda  \langle w_n,v\rangle_{2,\mu}
\Big|\;
	\leq \; 
	\sup_{\|v\|_q\leq 1}
	\frac{		
		\left(
			\int_\Omega |u|^2 |\nabla\varphi_n|_A^2\dmu
		\right)^{1/2}
	}{\|\varphi_n u\|_{2,\mu}} +
	\sup_{\|v\|_q\leq 1}
	\frac{		
		\left|
			\int_{\Omega} \bar{v} \langle A\nabla u, \nabla \varphi_n \rangle\dmu
		\right|
	}{\|\varphi_n u\|_{2,\mu}}
$$
%%%%%%%%%%%%%%%%%%
We show in the following that the right hand side is bounded from above by $Ca_n$ for some constant $C>0$. Hence, $\lambda\in\sigma(L+W)$ follows from Proposition~\ref{Prop:CritWeyl} as $\liminf_{n\to\infty}a_n=0$ by assumption.

\medskip

Clearly, the first quotient is bounded from above by $a_n$, hence it is enough to treat the second one. Denote $\tilde{v}:=\varphi_{n+1}^2(1-\varphi_{n-1})^2v$, and note that $\tilde{v}=v$ on $\mathrm{supp}(\nabla\varphi_n)$ implying 
%%%%%%%%%%%%%%%%%%
$$\int_{\Omega} \bar{v} \langle A\nabla u, \nabla \varphi_n \rangle \dmu
	= \int_{\Omega} \bar{\tilde{v}}  \langle A\nabla u, \nabla \varphi_n \rangle\dmu\,.
$$
%%%%%%%%%%%%%%%%%%
Hence, using integration by parts, and that $u$ is a generalized eigenfunction of $L+W$ for the eigenvalue $\lambda$, one gets
%%%%%%%%%%%%%%%%%%
\begin{align*}
&\int_{\Omega} \bar{v} \langle A\nabla u, \nabla \varphi_n \rangle\dmu\\
= &\int_\Omega  (\lambda-W)\bar{v} \varphi_n \varphi_{n+1}^2(1-\varphi_{n-1})^2 u \dmu
	+ \int_\Omega \varphi_{n+1}^2(1-\varphi_{n-1})^2\varphi_n \langle A\nabla u,\nabla \bar{v}\rangle\dmu \\
&- 2\int_\Omega \varphi_{n+1}^2(1-\varphi_{n-1})\varphi_n \bar{v}\langle A\nabla u,\nabla \varphi_{n-1}\rangle\dmu
	+ 2\int_\Omega (1-\varphi_{n-1})^2\varphi_{n+1}\varphi_n \bar{v}\langle A\nabla u,\nabla \varphi_{n+1}\rangle\dmu\\
=: &(1)+(2)+(3)+(4)
\end{align*}
%%%%%%%%%%%%%%%%%%
In the following it is shown that each of the latter terms in absolute value and divided by $\|\varphi_n u\|_{2,\mu}$ is bounded from above by $a_n$ up to a multiple constant. Each of the summand (1),(2),(3) and (4) is treated separately.

\medskip

Note that $\|v\|_{2,\mu}\leq\|v\|_q\leq 1$. By Cauchy-Schwarz, and the fact that (by definition of $A_n$) the function $\varphi_{n+1}^2(1-\varphi_{n-1})^2$ has support in $A_n$, the absolute value of the term $(1)$ is bounded by
%%%%%%%%%%%%%%%%%%
$$(|\lambda|+\|W\|_\infty) \|u\|_{L^2(A_n,\mu)}\|v\|_{2,\mu}
	\lesssim \|u\|_{L^2(A_n,\mu)}
	\lesssim a_n\, \|\varphi_n u\|_{2,\mu}\,,
$$
%%%%%%%%%%%%%%%%%%
where the constant in the estimate is independent of $v$.
By Cauchy-Schwarz and $\|v\|_q\leq 1$, the absolute value of the term $(2)$ is bounded by
%%%%%%%%%%%%%%%%%%
$$\left(\int_\Omega \varphi_{n+1}^2(1-\varphi_{n-1})^2|\nabla u|_A^2\dmu\right)^{1/2}\,.
$$
%%%%%%%%%%%%%%%%%%
Notice that we use the fact that each of the functions $\varphi_n$, $\varphi_{n+1}$, $(1-\varphi_{n-1})$ is between $0$ and $1$ in order to eliminate the extra powers. According to Proposition~\ref{Prop:L2Caccio}, up to a multiplicative constant (independent of $v$ with $\|v\|_q\leq 1$), the above expression is bounded by
%%%%%%%%%%%%%%%%%%
$$\|u\|_{L^2(A_n,\mu)}
	+ \left(\int_\Omega |u|^2|\nabla \varphi_{n-1}|_A^2 \dmu +\int_\Omega |u|^2|\nabla \varphi_{n+1}|_A^2\dmu\right)^{1/2}
	\leq a_n\, \|\varphi_n u\|_{2,\mu}\,.
$$
%%%%%%%%%%%%%%%%%%
%Therefore, using the assumption, one gets that the limit as $n\to \infty$ of the quotient of $(2)$ and $\|\varphi_n u\|_{2,\mu}$ is zero.
Concerning $(3)$, using that $\varphi_n$ and $\varphi_{n+1}$ are between $0$ and $1$, we estimate its absolute value by
%%%%%%%%%%%%%%%%%%
$$2\int_\Omega \varphi_{n+1}(1-\varphi_{n-1})|v| |\nabla u|_A|\nabla \varphi_{n-1}|_A\dmu\,,
$$
%%%%%%%%%%%%%%%%%%
which is estimated by Cauchy Schwarz by
%%%%%%%%%%%%%%%%%%
$$\left(\int_\Omega \varphi_{n+1}^2(1-\varphi_{n-1})^2|\nabla u|_A^2\dmu\right)^{1/2}\left(\int_\Omega |v|^2|\nabla \varphi_{n-1}|_A^2\dmu\right)^{1/2}\,.
$$
%%%%%%%%%%%%%%%%%%
By Proposition~\ref{Prop:L2Caccio}, up to a multiplicative constant, the first term in the expression above is estimated by 
%%%%%%%%%%%%%%%%%%
$$\|u\|_{L^2(A_n,\mu)}+\left(\int_\Omega |u|^2|\nabla \varphi_{n-1}|_A^2\dmu+\int_\Omega |u|^2|\nabla \varphi_{n+1}|_A^2\dmu\right)^{1/2}\,.
$$
%%%%%%%%%%%%%%%%%%
Thanks to the weak Hardy inequality, which holds by assumption, the term $\int_\Omega |v|^2|\nabla \varphi_{n-1}|_A^2\dmu$ is uniformly bounded in $v\in\Dd(q)$ satisfying $\|v\|_q\leq 1$. Thus, the term (3) is bounded from above by $a_n\, \|\varphi_n u\|_{2,\mu}$ up to a multiple constant (independent of $v$). Similarly, the term (4) is treated, which finishes the proof. 
\end{proof}
%%%%%%%%%%%%%%%%%%

\medskip

%%%%%%%%%%%%%%%%%%
\textbf{Proof of Theorem~\ref{thm:main}.} Suppose (i) holds. We stress that here, the function $\varphi$ is a positive $(H+W)$-harmonic function, for some $W:\Omega\to \R$ bounded. We make a ground state transform with $h=\varphi$, and consider the operator $L=T_h(H+W)T_h^{-1}$ associated with the quadratic form

$$\ab(v,w)=\int_\Omega \langle A\nabla v,\nabla w\rangle \,\mathrm{d}\mu,$$
where $\mu=h^2\mathrm{d}m$. The function $T_hu$ is a generalized eigenfunction for $L-W$, associated with the eigenvalue $\lambda$. According to Proposition \ref{Prop:GrouStaTransf}, $\sigma(L-W)=\sigma(H)$. Hence it is enough to prove that $\lambda$ belongs to the spectrum of $L-W$. Since by assumption $\{\varphi_n\}_{n\in\mathbb{N}}$ is an admissible cut-off sequence for $(H,\varphi)$, by (ii) in Lemma \ref{Lem:AdSeq-crit}, the sequence $\{\frac{\varphi_n}{\varphi}\}_{n\in\mathbb{N}}$ is admissible for $(H_h,1)$, with $H_h=L-W$. According to Lemma \ref{Lem:AdSeq-a-q}, the sequence $\{\frac{\varphi_n}{\varphi}\}_{n\in\mathbb{N}}$ is also admissible for $(L,1)$. Moreover,

$$\left\|\frac{\varphi_n}{\varphi}\, u\right\|_{2,m}
	= \left\|\frac{\varphi_n}{\varphi}\, T_h u\right\|_{2,\mu}.$$
Therefore, applying Lemma \ref{Lem:Main(i)} to $L-W$ and $T_hu$, we get that $\lambda\in \sigma(L-W)$.

\medskip

Suppose now that (ii) holds. Without loss of generality, we assume that $\varphi\equiv 1$. Since $H$ is bounded from below, one can find $W:\Omega \to \R$ bounded such that $H+W$ is non-negative (for instance, $W$ is a large constant). By the Allegretto-Piepenbrnink theorem, there exists a positive function $h$ that is $H+W$-harmonic. We make a ground state transform with $h$, and consider the operator $L=T_h(H+W)T_h^{-1}$ associated with the quadratic form

$$\ab(v,w)=\int_\Omega \langle A\nabla v,\nabla w\rangle \,\mathrm{d}\mu,$$
where $\mu=h^2\mathrm{d}m$. The function $T_hu$ is a generalized eigenfunction for $L-W$, associated with the eigenvalue $\lambda$. According to Proposition \ref{Prop:GrouStaTransf}, $\sigma(L-W)=\sigma(H)$. Hence it is enough to prove that $\lambda$ belongs to the spectrum of $L-W$. Since by assumption $\{\varphi_n\}_{n\in\mathbb{N}}$ is an admissible cut-off sequence for $(H,1)$, by (i) in Lemma \ref{Lem:AdSeq-crit} it is also admissible for $(H_h,1)$, and by Lemma \ref{Lem:AdSeq-a-q} it is also admissible for $(H_h+W,1)$. Since $L=H_h+W$, we obtain that  $\{\varphi_n\}_{n\in\mathbb{N}}$ is an admissible cut-off sequence for $(L,1)$. Since

\begin{align*}
&\frac{\|u\|_{L^2(A_n,\dm)}
		+ \sqrt{\int_\Omega |u|^2(|\nabla \varphi_{n-1}|^2_A
			+|\nabla \varphi_{n}|_A^2
			+|\nabla \varphi_{n+1}|_A^2)\,\dm}}{\|\varphi_n u\|_{2,m}}\\
			=	&\frac{\|T_h u\|_{L^2(A_n,\dmu)}
		+ \sqrt{\int_\Omega |T_h u|^2(|\nabla \varphi_{n-1}|^2_A
			+|\nabla \varphi_{n}|_A^2
			+|\nabla \varphi_{n+1}|_A^2)\,\dmu}}{\|\varphi_n T_h u\|_{2,\mu}}
\end{align*}
the fact that $\lambda\in \sigma(L-W)$ follows from Lemma \ref{Lem:Main(ii)} with $L-W$ and $T_hu$.
$\hfill\Box$
%%%%%%%%%%%%%%%%%%
\vspace{.2cm}

%%%%%%%%%%%%%%%%%%%%%%%%%%%%%%%%%%%%%%%%%%%%%%%%%%%%%%%%%%%%%%%%%%%%%%%%%%%%%%%%%%%%%
%%%%%%%%%%%%%%%%%%%%%%%%%%%%%%%%%%%%%%%%%%%%%%%%%%%%%%%%%%%%%%%%%%%%%%%%%%%%%%%%%%%%%
\section{Applications in the critical case}
\label{Sec:Crit}
%%%%%%%%%%%%%%%%%%%%%%%%%%%%%%%%%%%%%%%%%%%%%%%%%%%%%%%%%%%%%%%%%%%%%%%%%%%%%%%%%%%%%
%%%%%%%%%%%%%%%%%%%%%%%%%%%%%%%%%%%%%%%%%%%%%%%%%%%%%%%%%%%%%%%%%%%%%%%%%%%%%%%%%%%%%

%%%%%%%%%%%%%%%%%%

In this section, we explain how Theorem~\ref{thm:main} generalizes \cite[Theorem 1.1]{BP17}. We assume that $H$ is of the form \eqref{Eq:Schro}, is non-negative and {\em critical}, and denote by $h>0$ the unique (up to multiplicative constant) ground state for $H$. As in Theorem~\ref{thm:main}, one defines the quadratic form

%%%%%%%%%%%%%%%%%%
$$
\ab(u,v) \;
	:= \; \int_\Omega \langle A\nabla u, \nabla v\rangle\ d\mu (x),
	\,,
$$
where $\dmu=h^2\dm$. The main result of this section is the following:

%%%%%%%%%%%%%%%%%%
\begin{theorem} \label{thm:good_critical}

Assume that $H$ is as above, and that $W:\Omega\to \R$ is bounded such that $H+W$ is critical with ground state $h$. Then, there exists a good cut-off sequence $\{\varphi_n\}_{n\in\mathbb{N}}$ for $(H,h)$, such that $\ab(\frac{\varphi_n}{h},\frac{\varphi_n}{h})\to 0$ as $n\to\infty$ and $\bigcup_{n\in\N}\supp(\varphi_n)=\Omega$.

\end{theorem}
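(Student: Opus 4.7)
The plan is to reduce the construction, via the ground state transform of Proposition~\ref{Prop:GrouStaTransf}, to building an admissible cut-off sequence for the operator $L$ on $L^2(\Omega,\dmu)$ associated with the quadratic form $\ab$, where $\dmu=h^2\dm$. Since $(H+W)h=0$, Proposition~\ref{Prop:GrouStaTransf}(i) identifies $L=H_h+W$ and yields $L\cdot 1=0$; moreover, the criticality of $H+W$ with ground state $h$ transfers to the criticality of $L$ with ground state $1$. By Lemma~\ref{Lem:AdSeq-crit}(ii) and Lemma~\ref{Lem:AdSeq-a-q}, it then suffices to produce $\{\tilde{\varphi}_n\}_{n\in\N}$ admissible for $(L,1)$ with $\ab(\tilde{\varphi}_n,\tilde{\varphi}_n)\to 0$ and $\bigcup_n\supp(\tilde{\varphi}_n)=\Omega$; setting $\varphi_n:=h\tilde{\varphi}_n$ then furnishes the required sequence for $(H,h)$.

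Next, I would invoke the classical criticality-theoretic construction of an \emph{Evans potential} (as carried out in~\cite{BP17}) to obtain a function $E\in C(\Omega)\cap W^{1,2}_\loc(\Omega)$ satisfying $E\geq 1$, $E$ proper (i.e.\ $E\to\infty$ along the ends of $\Omega$), $L$-superharmonic on $\Omega$, and $L$-harmonic outside some compact set $K\subset\Omega$. Choosing $R_n\nearrow\infty$ with $R_n^2<R_{n+1}$ and $K\subset\{E\leq R_1\}$, I define the log-linear cut-offs
\begin{equation*}
\chi_n(t):=\begin{cases}1 & t\leq R_n,\\[.5mm] \dfrac{2\log R_n-\log t}{\log R_n} & R_n\leq t\leq R_n^2,\\[.5mm] 0 & t\geq R_n^2,\end{cases}
\qquad \tilde{\varphi}_n:=\chi_n\circ E.
\end{equation*}
Then $\tilde{\varphi}_n\in C_0(\Omega)\cap\Dd(\ab)$, properties (i)--(ii) of Definition~\ref{Def:AdmSeq} are immediate from the layering $\supp\tilde{\varphi}_n\subset\{E\leq R_n^2\}\subset\{E\leq R_{n+1}\}=\{\tilde{\varphi}_{n+1}=1\}$, and $\bigcup_n\supp\tilde{\varphi}_n=\Omega$ follows from properness of $E$.

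The remaining two analytic inputs are obtained by testing against $E$. First, the $L$-superharmonicity of $E$ tested against $v^2/E$ gives, after integration by parts and Cauchy--Schwarz, the Hardy inequality
\begin{equation*}
\int_\Omega \frac{v^2\,|\nabla E|_A^2}{E^2}\dmu\;\leq\;4\,\ab(v,v),\qquad v\in\core.
\end{equation*}
Since $|\nabla\tilde{\varphi}_n|_A^2\leq(\log R_n)^{-2}|\nabla E|_A^2/E^2$, this delivers the weak Hardy inequality \eqref{Eq:WH} for $\{\tilde{\varphi}_n\}$ uniformly in $n$, provided $R_n\geq e$. Second, the chain-rule identity $L(\log E)=|\nabla E|_A^2/E^2$ (valid where $LE=0$, hence on the annulus $A_n:=\{R_n\leq E\leq R_n^2\}$ for $n$ large) combined with a standard integration-by-parts argument against a cut-off version of $\log(E/R_n)$ yields $\int_{A_n}|\nabla E|_A^2/E^2\,\dmu=O(\log R_n)$. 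Consequently
\begin{equation*}
\ab(\tilde{\varphi}_n,\tilde{\varphi}_n)=\frac{1}{(\log R_n)^2}\int_{A_n}\frac{|\nabla E|_A^2}{E^2}\dmu = O\!\Big(\tfrac{1}{\log R_n}\Big)\longrightarrow 0.
\end{equation*}

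The main obstacle is the existence and regularity of the Evans potential $E$ together with the careful annular integration-by-parts estimate for $\int_{A_n}|\nabla E|_A^2/E^2\,\dmu$; these are delicate pieces of criticality theory, already carried out in~\cite{BP17} for the case $W\equiv 0$. The new content of the present theorem is precisely the ground state transform step from the first paragraph, which reduces the bounded-potential situation to the standard one.
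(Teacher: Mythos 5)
Your scheme matches the paper's skeleton: ground state transform to reduce to $(L,1)$ with $L$ critical on $L^2(\Omega,\dmu)$, Evans potential $E$ outside a compact $K$, a Hardy inequality driven by $E$, and cut-offs built as functions of $E$. The one genuine deviation is the shape of the cut-off: you take $\tilde\varphi_n=\chi_n\circ E$ with $\chi_n$ affine in $\log t$, whereas the paper's $\psi_{r,R}$ is affine in $t$ and hence $L$-harmonic on the annulus. That choice is not cosmetic: it is what makes $\ab(\psi_{r,R},\psi_{r,R})=\mathrm{Cap}_{\mathbf a}(\bar\Omega_r,\Omega_R)$ (Lemma~\ref{lem:Cap_attained}), so the zero-capacity characterization of criticality (Lemma~\ref{lem:capacity}) forces the energy to $0$ along a recursively chosen sequence $(r_n,R_n)$ satisfying (H1)--(H4).

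The gap in your argument is the estimate $\int_{A_n}|\nabla E|_A^2/E^2\,\dmu=O(\log R_n)$, which you attribute to ``$L(\log E)=|\nabla E|_A^2/E^2$ and a standard integration-by-parts argument against a cut-off version of $\log(E/R_n)$.'' This does not close up as stated: the truncation of $\log(E/R_n)$ is equal to the nonzero constant $\log R_n$ on $\{E>R_n^2\}$, so the far-away boundary terms do not vanish and the resulting identity mixes the integral you want with terms of the same order. More fundamentally, your energy estimate does not invoke the criticality of $L$ anywhere (only the existence of the Evans potential), whereas criticality is exactly the hypothesis the paper uses to push the energy down — that is the content of (H4). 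One can rescue your log-linear cut-offs, but by a genuinely different route: either a flux argument (constancy of $\Phi=\int_{\{E=c\}}\langle A\nabla E,\nu\rangle\, m\,dS$ by harmonicity plus the co-area formula gives $\int_{A_n}|\nabla E|_A^2/E^2\,\dmu=\Phi\,(R_n^{-1}-R_n^{-2})$, far better than $O(\log R_n)$, but this requires care about level-set regularity for merely measurable $A$), or a dyadic decomposition combined with the capacity estimate $\int_{\{E<R\}}|\nabla E|_A^2\,\dmu=o(R^2)$ coming from Lemmas~\ref{lem:capacity}--\ref{lem:Cap_attained}. Neither is ``a standard integration-by-parts,'' and neither is what you wrote.

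A secondary issue: you assert the Evans potential is $L$-superharmonic on all of $\Omega$. In fact the Evans potential supplied by Lemma~\ref{Lem:Evans} is $P=(L+W')$-harmonic with $W'\geq 0$ compactly supported, so $LE=-W'E\leq 0$ on $K$; it is $L$-\emph{sub}harmonic there, not superharmonic. Accordingly the Hardy inequality $\int v^2|\nabla E|_A^2/(4E^2)\,\dmu\leq\ab(v,v)$ holds only for $v$ supported in $\Omega\setminus K$; to obtain the weak Hardy inequality \eqref{Eq:WH} for general $v\in\Dd(\ab)$ you must split $v=\chi v+(1-\chi)v$ with $\chi$ supported near $K$ and use that $\mathrm{supp}(\nabla\tilde\varphi_n)$ is disjoint from $\mathrm{supp}(\chi)$ for $n$ large. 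This is exactly what Proposition~\ref{Prop:Hardy} in the paper does, and it is a step you cannot skip.
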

%%%%%%%%%%%%%%%%%%

%%%%%%%%%%%%%%%%%%
\begin{corollary}[\cite{BP17}, Theorem~1.1]\label{Cor:BP}

Let $u$ be a generalized eigenfunction associated to $\lambda$ for $H$, and assume that $W:\Omega\to\R$ be bounded such that $H+W$ is critical with ground state $h$. Assume that $|u|\lesssim  h$. Then, $\lambda\in\sigma(H)$.

\end{corollary}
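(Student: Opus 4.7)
The plan is to invoke Theorem~\ref{thm:main}(i) with the choice $\varphi := h$. The structural assumption on $\varphi$ is already present, since by hypothesis $(H+W)h=0$ in the weak sense. To produce the required admissible cut-off sequence, I would apply Theorem~\ref{thm:good_critical} to the critical operator $H+W$, which supplies $\{\varphi_n\}_{n\in\NM}$ admissible for $(H,h)$ with the two extra features that $\ab(\varphi_n/h,\varphi_n/h)\to 0$ and $\bigcup_{n\in\NM}\supp(\varphi_n)=\Omega$.

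It then remains to verify the growth condition of Theorem~\ref{thm:main}(i),
$$\liminf_{n\to\infty}\frac{\max_{A_n}|u/h|}{\|\varphi_n(u/h)\|_{2,m}}\left(\int_\Omega \left|\nabla(\varphi_n/h)\right|_A^2 h^2\dm\right)^{1/2}=0.$$
Three ingredients enter. First, the pointwise bound $|u|\lesssim h$ makes $\max_{A_n}|u/h|$ uniformly bounded in $n$. Second, since $\dmu=h^2\dm$, the parenthesized factor is exactly $\ab(\varphi_n/h,\varphi_n/h)$, which tends to $0$ by the choice of $\{\varphi_n\}$. The third and only non-cosmetic point is to show that the denominator $\|\varphi_n(u/h)\|_{2,m}$ stays bounded away from zero.

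For the lower bound on the denominator I would assume $u\not\equiv 0$ (else the statement is vacuous) and exploit the exhaustion and saturation properties of the cut-off sequence. Since $u$ is locally H\"older continuous, there is a compact set $K\subset\Omega$ with $\int_K|u|^2\dm>0$. By property (ii) of Definition~\ref{Def:AdmSeq} one has $\varphi_{n+1}=h$ on $\supp(\varphi_n)$, and the exhaustion $\bigcup_n\supp(\varphi_n)=\Omega$ forces $K\subset\supp(\varphi_{n-1})$, hence $\varphi_n\equiv h$ on $K$, for all sufficiently large $n$. This gives $\|\varphi_n(u/h)\|_{2,m}^2\geq \int_K |u|^2\dm>0$ eventually, and combining the three ingredients yields the required liminf equals zero, so Theorem~\ref{thm:main}(i) delivers $\lambda\in\sigma(H)$.

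The only real obstacle is precisely that lower bound on $\|\varphi_n(u/h)\|_{2,m}$; once one recognizes that both the exhaustion $\bigcup_n\supp(\varphi_n)=\Omega$ provided by Theorem~\ref{thm:good_critical} and the saturation condition (ii) in the definition of an admissible cut-off sequence must be used, everything else is a direct substitution of the hypothesis $|u|\lesssim h$ into the main theorem.
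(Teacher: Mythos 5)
Your proof is correct and takes essentially the same route as the paper's: invoke Theorem~\ref{thm:good_critical} to produce the admissible cut-off sequence for $(H,h)$, then verify the growth condition of Theorem~\ref{thm:main}(i) using $|u|\lesssim h$ and the fact that $\ab(\varphi_n/h,\varphi_n/h)\to 0$. The only small difference is in the treatment of the denominator $\|\varphi_n(u/h)\|_{2,m}$: the paper disposes of the case $u\in L^2$ separately and then shows the denominator tends to infinity when $u\notin L^2$, whereas you give a uniform positive lower bound via the exhaustion and saturation properties, which is weaker but equally sufficient and avoids the case split.
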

%%%%%%%%%%%%%%%%%%

%%%%%%%%%%%%%%%%%%
\begin{proof}
If $u\in L^2(\Omega,\mathrm{d}m)$, then obviously $\lambda$ belongs to the spectrum of $H$. Therefore, without loss of generality, one can assume that $u\notin L^2(\Omega,\mathrm{d}m)$. Let $\{\varphi_n\}_{n\in\mathbb{N}}$ be the good cut-off sequence for $(H,h)$ provided by Theorem \ref{thm:good_critical}. Using (i), (ii) from the definition of a good cut-off sequence and $\bigcup_{n\in\N}\supp(\varphi_n)=\Omega$, it is easily seen that

$$\lim_{n\to \infty} \left\|\frac{\varphi_n}{h}u\right\|_{2,m}
	=\infty.
$$
By assumption and Theorem~\ref{thm:good_critical}, one has

$$\lim_{n\to \infty}\max_{A_n}\left|\frac{u}{h}\right|\ab\left(\frac{\varphi_n}{h},\frac{\varphi_n}{h}\right)
	=0.
$$
Hence,

$$\lim_{n\to \infty}\frac{\max_{A_n}\left|\frac{u}{h}\right|\ab(\frac{\varphi_n}{h},\frac{\varphi_n}{h})}{\|\frac{\varphi_n}{h}u\|_{2,m}}=0,$$
and the result follows from (i) in Theorem~\ref{thm:main}.
\end{proof}
%%%%%%%%%%%%%%%%%%

\medskip

The remaining part of this section will be devoted to the proof of Theorem~\ref{thm:good_critical}.

\medskip
 
We denote by $L:=-\diver\! (A\nabla\cdot )$ on $L^2(\Gw,\dmu)$ the self-adjoint operator associated to the quadratic form $\ab$. By ground state transform, $L=h^{-1}(H+W)h$, therefore $L$ is critical with ground state $1$. %Furthermore, by ground state transform, $\mathbf{a}$ is the quadratic form of $L$. 
Hence, by Lemma \ref{Lem:AdSeq-a-q} and (ii) in Lemma \ref{Lem:AdSeq-crit}, a reformulation of Theorem \ref{thm:good_critical} is that there exists a good cut-off sequence $\{\varphi_n\}_{n\in\mathbb{N}}$ for $(L,1)$, such that $\{\varphi_n\}_{n\in\mathbb{N}}$ is a null sequence for $L$.

%%%%%%%%%%%%%%%%%%%%%%%%%%%%%%%%%%%%%%%%%%%%%%%%%%%%%%%%%%%%%%%%%%%%%%%%%%%%%%%%%%%%%
\subsection{Construction of the null-sequence}
\label{Ssec:Null}
%%%%%%%%%%%%%%%%%%%%%%%%%%%%%%%%%%%%%%%%%%%%%%%%%%%%%%%%%%%%%%%%%%%%%%%%%%%%%%%%%%%%%

In this paragraph, we define the sequence $\{\varphi_n\}_{n\in\mathbb{N}}$ and prove that it is a null sequence for $L$ and a good cut-off sequence for $(L,1)$. %In the next paragraph, it will be shown to be also a good cut-off sequence for $(L,1)$.

The construction of the sequence $\{\varphi_n\}_{n\in\mathbb{N}}$ is based on the existence of Evans potentials, which is explained next. We let $K\Subset \Omega$ be a compact subset, and let $\varphi$ be a locally H\"older continuous, $W^{1,2}_{loc}(\Omega\setminus K)$ function. We say that $\varphi$ is an {\em Evans potential} for $L$ outside of $K$ if $\varphi$ is a positive, $L$-harmonic function on $\Omega\setminus K$, such that

$$\lim_{x\to\infty}\varphi(x)=+\infty.$$

\begin{lemma}\label{Lem:Evans}

For every open, relatively compact subset $U\subset \Omega$, there exists an Evans potential $\varphi$ for $L$ outside of $\bar{U}$.

\end{lemma}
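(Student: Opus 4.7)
The plan is to realise $\varphi$ as the locally uniform limit of normalised Dirichlet solutions on an exhaustion of $\Omega$, and then use criticality to upgrade the limit's behaviour at infinity from merely unbounded to $+\infty$-divergent.

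\textbf{Setup, Dirichlet solutions, and $u_n\to 0$.} I would first fix a smooth exhaustion $\bar U\Subset\Omega_1\Subset\Omega_2\Subset\cdots$ of $\Omega$ and, for each $n$, solve the Dirichlet problem $Lu_n=0$ on $\Omega_n\setminus\bar U$ with $u_n=0$ on $\partial U$ and $u_n=1$ on $\partial\Omega_n$. Elliptic theory and the maximum principle give $0\le u_n\le 1$, and a comparison argument on each $\Omega_m\setminus\bar U$ shows that $\{u_n\}$ is pointwise decreasing in $n$. The limit $u_\infty$ is $L$-harmonic on $\Omega\setminus\bar U$, bounded in $[0,1]$, and vanishes on $\partial U$; extending $u_\infty$ by $0$ across $\partial U$ yields an $L$-subharmonic function $\tilde u_\infty$ on $\Omega$ (the outward jump of $\nabla u_\infty$ at $\partial U$ produces the correct sign of the distributional surface term), so that $1-\tilde u_\infty$ is a bounded positive $L$-superharmonic function on $\Omega$. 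By Theorem~\ref{Theo:char_criticality}(iii), criticality forces $1-\tilde u_\infty$ to be a constant multiple of the ground state $1$, hence $\tilde u_\infty\equiv 0$ and $u_n\downarrow 0$ locally uniformly by Harnack.

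\textbf{Normalisation and extraction of a limit.} Fixing a reference point $x_0\in\Omega_1\setminus\bar U$, I set $\varphi_n:=u_n/u_n(x_0)$; each $\varphi_n$ is positive and $L$-harmonic on $\Omega_n\setminus\bar U$, with $\varphi_n(x_0)=1$ and $\varphi_n|_{\partial U}=0$. The Harnack inequality makes $\{\varphi_n\}$ locally uniformly bounded on compact subsets of $\overline{\Omega\setminus U}$ (with boundary H\"older regularity handling a neighbourhood of $\partial U$); elliptic estimates and a diagonal Arzel\`a--Ascoli extraction then yield a subsequence converging in $C_{\loc}(\overline{\Omega\setminus U})\cap W^{1,2}_{\loc}(\Omega\setminus\bar U)$ to a locally H\"older continuous, $L$-harmonic function $\varphi$ on $\Omega\setminus\bar U$ with $\varphi(x_0)=1$, $\varphi|_{\partial U}=0$, and $\varphi>0$ throughout $\Omega\setminus\bar U$ by the strong maximum principle.

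\textbf{Divergence at infinity --- the main obstacle.} The remaining task is to show $\lim_{x\to\infty}\varphi(x)=+\infty$. Unboundedness of $\varphi$ is the easy half and follows from the same extension-and-criticality trick as above: if $\varphi\le M$, then $1-\tilde\varphi/M$ (with $\tilde\varphi$ the extension of $\varphi$ by $0$ to $\bar U$) is a bounded positive $L$-superharmonic function on $\Omega$ distinct from a constant, contradicting Theorem~\ref{Theo:char_criticality}(iii). The delicate upgrade is from ``unbounded'' to ``$\liminf_{x\to\infty}\varphi(x)=+\infty$''. I would assume, for contradiction, that $\alpha:=\liminf_{x\to\infty}\varphi(x)<\infty$ and apply the analogous strategy to the truncation $w:=\min(\varphi,\alpha)$: this is bounded, positive, $L$-superharmonic on $\Omega\setminus\bar U$, vanishes on $\partial U$, and satisfies $\lim_{x\to\infty}w(x)=\alpha$. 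Comparing $w$ with $\alpha u_n$ on $\Omega_n\setminus\bar U$ and passing to the limit $n\to\infty$ using $u_n\to 0$, together with a careful extension of $w$ across $\partial U$, should produce yet another non-trivial bounded positive $L$-superharmonic function on $\Omega$, again contradicting criticality. The trickiest point is tracking the singular contribution to $Lw$ at $\partial U$ produced by the extension; an alternative cleaner route I would consider is to invoke Martin boundary theory, since for a critical operator the minimal Martin boundary of $\Omega$ at infinity reduces to a single point, which forces the (unique, up to multiplicative constant) minimal positive $L$-harmonic function on $\Omega\setminus\bar U$ vanishing on $\partial U$ to diverge to $+\infty$ at $\Omega$'s Martin infinity.
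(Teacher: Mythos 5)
Your approach is different from the paper's and contains a genuine gap at the step you yourself flag as the ``main obstacle.''

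The paper's proof does not construct $\varphi$ as a limit of normalized Dirichlet solutions. Instead, it perturbs $L$ to a \emph{subcritical} operator $P=L+W$ with $W\geq 0$ compactly supported in $U$, fixes a pole $y\in U$, and invokes a deep theorem of Ancona (\cite[Theorem~1]{Anc}) to obtain a positive $P$-harmonic function $\varphi$ with $G_P(\cdot,y)/\varphi\to 0$ at infinity. Since $G_P(\cdot,y)$ is $L$-harmonic of minimal growth outside $U$ and $L$ is critical with ground state $1$, one has $G_P(\cdot,y)\asymp 1$ at infinity, hence $\varphi\to+\infty$; and $\varphi$ is $L$-harmonic off $\supp W\subset U$. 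The divergence at infinity is thus obtained in one stroke from Ancona's comparison theorem rather than deduced from a limiting construction.

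Your first two steps (harmonic measure of infinity vanishes by criticality, extraction of a positive $L$-harmonic limit $\varphi$ with $\varphi|_{\partial U}=0$, and the unboundedness argument via extension of $\varphi$ by zero across $\partial U$) are sound. The gap is exactly where you expect. Your truncation argument does not close it: with $w=\min(\varphi,\alpha)$, you are comparing the $L$-\emph{super}harmonic $w$ against the $L$-harmonic $\alpha u_n$ from the side $w\leq\alpha u_n$ on $\partial(\Omega_n\setminus\bar U)$, but the maximum principle only propagates boundary inequalities inward for \emph{sub}harmonic functions dominated by harmonic ones — the inequality runs the wrong way for a superharmonic $w$. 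Moreover, the ``careful extension of $w$ across $\partial U$'' cannot produce a bounded positive $L$-superharmonic function on $\Omega$: near $\partial U$ one has $w=\varphi$, and the zero extension of $\varphi$ across $\partial U$ is \emph{sub}harmonic (the outward normal derivative of $\varphi$ at $\partial U$ contributes a nonnegative distributional surface term to $-L\tilde\varphi$, i.e.\ $L\tilde\varphi\leq 0$ is false — it is $\tilde\varphi$ that is subharmonic), so the extension of $w$ is subharmonic in a collar of $\partial U$ and superharmonic further out, hence neither globally. Your Martin-boundary alternative is, in effect, a restatement of the conclusion: that the Martin kernel of $\Omega\setminus\bar U$ at the point at infinity blows up at infinity is precisely the kind of boundary-Harnack/comparison statement that Ancona's theorem furnishes, and it does not follow from the triviality of the Martin boundary of $\Omega$ alone. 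To repair the proof along your lines you would essentially need to import Ancona's comparison result (or an equivalent construction of Evans potentials, e.g.\ via capacities of level sets), which is exactly what the paper does.
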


\begin{proof}
Let $W\geq0$ be a smooth, compactly supported potential, that is not identically zero, and which vanishes outside of $U$. Then the operator $P=L+W$ is subcritical. Hence $P$ has positive minimal Green functions $G(x,y)$. Let us fix a pole $y\in U$. Since $W$ is compactly supported in $U$, the Green function $G(x,y)$ is $L$-harmonic on $\Omega\setminus U$, with minimal growth at infinity. By the fact that $L$ is critical with ground state equal to $1$, one has

$$G(x,y) \asymp 1$$
as $x\to \infty$. According to \cite[Theorem 1]{Anc}, there exists $\varphi:\Omega \to \R$, positive, $P$-harmonic such that

$$\lim_{x\to \infty}\frac{G(x,y)}{\varphi(x)}=0,$$
consequently

$$\lim_{x\to \infty} \varphi(x)=+\infty.$$
Since $W$ vanishes outside of $U$, $\varphi$ is $L$-harmonic on $\Omega\setminus U$, therefore it is an Evans potential.
\end{proof}

\medskip

In the sequel, we will fix an Evans potential $\varphi$ outside of some compact set $K$. Without loss of generality, up to changing $K$, one can assume that $\varphi|_{\partial K}=\frac{1}{2}$, and that $\varphi>\frac{1}{2}$ on $\Omega\setminus K$. We extend $\varphi$ to $K$ by the constant $\frac{1}{2}$. For $s>\frac{1}{2}$, we let $\Omega_s=\{\frac{1}{2}\leq \varphi<s\}$, and $A(r,R)=\{r<\varphi<R\}$. For $\frac{1}{2}<r<R$, we define $\psi_{r,R}(x)$ by

$$\psi_{r,R}(x)= \begin{cases}
\frac{R-\varphi(x)}{R-r},\qquad &x\in \overline{A(r,R)},\\
1,\qquad & x\in \Omega_r,\\
0,\qquad &x\notin \Omega_R.
\end{cases}$$
Clearly, $L\psi_{r,R}=0$ weakly in $A(r,R)$, and $\psi_{r,R}$ is $W^{1,2}(\Omega)$, and H\"older continuous. Recall that if $F$ is a compact set and $U$ is open such that $F\subseteq U$, then the relative capacity $\mathrm{Cap}_{\mathbf{a}}(F,U)$ with respect to the quadratic form $\mathbf{a}$ is defined by

$$\mathrm{Cap}_{\mathbf{a}}(F,U):=\inf_{\psi\in \mathcal{L}(F,U)}\mathbf{a}(\psi,\psi),$$
where $\mathcal{L}(F,U)$ denotes the set of all Lipschitz functions $\psi$ such that $\psi|_F\geq  1$, and $\supp(\psi)\subseteq U$. Let us recall (see \cite{LSW}) that a function $u\in W_0^{1,2}(U)$ is said to satisfy $u\geq 1$ on $F$ {\em in the sense of }$W^{1,2}_0(U)$, if there exists a sequence of functions $\{u_n\}_{n\in \N}$ in $\mathcal{L}(F,U)$ such that $u_n\to u$ in $W^{1,2}_0(U)$. One defines analogously that $u\leq 1$ on $F$ in the sense of $W^{1,2}_0(U)$; we say that $u=1$ on $F$ in the sense of $W^{1,2}_0(U)$ if both $u\leq 1$ and $u\geq1$ in the sense of $W^{1,2}_0(U)$. Let $\mathcal{H}(F,U)$ be the set of all functions $u\in W_0^{1,2}(U)$, such that $u\geq 1$ on $F$ in the sense of $W_0^{1,2}(U)$. It is easy to see by using the local uniform ellipticity of $A$, that if $U$ is relatively compact in $\Omega$, then

$$\mathrm{Cap}_{\mathbf{a}}(F,U)=\inf_{\psi\in \mathcal{H}(F,U)}\mathbf{a}(\psi,\psi).$$
The capacity of a compact set $F$ is then defined as

$$\mathrm{Cap}_{\mathbf{a}}(F)
	:=\lim_{n\to \infty}\mathrm{Cap}_{\mathbf{a}}(F,U_n),
$$
for any exhaustion $\{U_n\}_{n\in\mathbb{N}}$ of $\Omega$ . Clearly, $\mathrm{Cap}_{\mathbf{a}}(F,U_n)$ is a non-increasing sequence. In addition, it is straightforward to see that this definition does not depend on the choice of the exhaustion $\{U_n\}_{n\in\mathbb{N}}$.

It is well-known --at least when $L$ is the Laplacian-- that $L$ is critical, if and only if $\mathrm{Cap}_{\mathbf{a}}(F)=0$, for some (any) compact set $F\subset \Omega$.  For the sake of completeness, a proof of the similar statement in our setting is provided. %and since this result will be very useful for us in the sequel, we write down a proof:

\begin{lemma}\label{lem:capacity}

The operator $L$ is critical, if and only if for some (any) compact set $F\subseteq \Omega$, $\mathrm{Cap}_{\mathbf{a}}(F)=0$.

\end{lemma}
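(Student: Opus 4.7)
The plan is to deduce both directions from the null-sequence characterizations of criticality provided by Theorem~\ref{Theo:char_criticality}, using that here the ground state of $L$ is the constant function~$1$.

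For the implication ``$L$ critical $\Rightarrow$ $\mathrm{Cap}_{\mathbf{a}}(F)=0$ for every compact $F\subseteq\Omega$'', I would invoke part~(v) of Theorem~\ref{Theo:char_criticality} to obtain a null-sequence $\{h_n\}_{n\in\mathbb{N}}\subset C_0^\infty(\Omega)$ satisfying $0\leq h_n\leq 1$, $h_n\to 1$ locally uniformly, and $\mathbf{a}(h_n,h_n)\to 0$. Given any compact $F\subseteq\Omega$, the local uniform convergence forces $h_n\geq 1/2$ on $F$ for all $n$ large enough, so $2h_n\in C_0^\infty(\Omega)$ belongs to $\mathcal{L}(F,U_{n'})$ as soon as the exhausting open set $U_{n'}$ contains $\mathrm{supp}(h_n)$. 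This yields $\mathrm{Cap}_{\mathbf{a}}(F,U_{n'})\leq 4\,\mathbf{a}(h_n,h_n)\to 0$, hence $\mathrm{Cap}_{\mathbf{a}}(F)=0$.

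For the converse, I would take $F$ to be a closed ball $\bar{B}$, which is consistent with the ``some (any)'' phrasing: once the equivalence is established for a ball, it extends to arbitrary compacts by combining the forward direction above with the trivial implication ``any $\Rightarrow$ some''. Assuming $\mathrm{Cap}_{\mathbf{a}}(\bar{B})=0$, by definition there exist $\psi_n\in\mathcal{H}(\bar{B},U_n)$ with $\mathbf{a}(\psi_n,\psi_n)\to 0$. Truncating to $0\leq\psi_n\leq 1$ only decreases the Dirichlet energy while keeping $\psi_n=1$ on $\bar{B}$ in the $W^{1,2}_0$-sense, so $\int_B \psi_n^2\,d\mu\geq\mu(B)>0$. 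A standard mollification together with a diagonal extraction then produces a sequence $\{\varphi_n\}_{n\in\mathbb{N}}\subset C_0^\infty(\Omega)$ with $\mathbf{a}(\varphi_n,\varphi_n)\to 0$ and $\int_B\varphi_n^2\,d\mu\to\mu(B)$; after a uniformly bounded multiplicative renormalisation this becomes a null-sequence, so Theorem~\ref{Theo:char_criticality}(iv) delivers that $L$ is critical.

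The main obstacle is the smoothing and diagonal extraction in the converse direction: one must pass from the Lipschitz, merely $W^{1,2}_0$-capacitary potentials to compactly supported smooth functions without losing either the decay $\mathbf{a}(\varphi_n,\varphi_n)\to 0$ or the uniform lower bound on $\int_B\varphi_n^2\,d\mu$. This rests on the density of $C_0^\infty(\Omega)$ in $W^{1,2}_0(U)$ for $U\Subset\Omega$ together with the local uniform ellipticity~\eqref{Eq:A}, which ensures that convergence with respect to $\mathbf{a}$ is controlled by ordinary $W^{1,2}$-convergence on compact subsets; it is routine but needs to be written out carefully in order to track both quantities simultaneously.
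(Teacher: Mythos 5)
Your forward direction is essentially the argument in the paper: both use the null-sequence $\{h_n\}_{n\in\N}$ converging locally uniformly to the (here constant) ground state, together with the positive lower bound $\min_F h_n\geq c>0$ for large $n$, to drive the capacity to zero. The paper phrases it as a contradiction while you run it directly and track the competitors $2h_n\in\mathcal{L}(F,U_{n'})$ explicitly, but the content is the same.

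The converse is where you genuinely depart from the paper. The paper does not construct a null-sequence at all: it assumes $L$ is \emph{not} critical and invokes the weighted spectral gap from~\cite[Theorem~1.4]{PiTi06} --- the existence of a positive continuous $\rho$ with $\ab(u,u)\geq\int_\Omega\rho\,u^2\,d\mu$ for all $u\in\core$ --- to get the uniform lower bound $\mathrm{Cap}_{\ab}(F)\geq\mu(F)\min_F\rho>0$ in one line. You instead go the other way around: you assume $\mathrm{Cap}_{\ab}(\bar B)=0$, take near-minimizers $\psi_n\in\mathcal{H}(\bar B,U_n)$, truncate them to $[0,1]$, mollify, renormalize, and produce a null-sequence so that Theorem~\ref{Theo:char_criticality} yields criticality. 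Both are correct. Your route has the merit of being self-contained (it does not import the ground-state representation/spectral-gap theorem from criticality theory; it reconstructs a null-sequence by hand), and it dovetails nicely with the paper's later use of the null-sequence in Section~\ref{Ssec:Null}. The price is exactly the technical step you flag: the passage from the Lipschitz capacitary potentials to $\core$ functions while simultaneously keeping $\ab(\varphi_n,\varphi_n)\to 0$ and $\int_B\varphi_n^2\,d\mu$ bounded away from zero. That step is sound as long as you mollify with a radius chosen per $n$ small enough that the mollified function remains supported in a fixed slightly larger compact set (where the ellipticity constant $\lambda_K$ and the density of $\mu$ are controlled), so that $W^{1,2}$-convergence implies $\ab$-convergence; this should be written out rather than left implicit. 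Finally, note that both your argument and the paper's silently restrict to compact sets $F$ with $\mu(F)>0$ (you take a closed ball, the paper inserts the caveat ``one can assume $\mu(F)>0$''); for $d\geq 2$ a single point has $\mathrm{Cap}_{\ab}(\{p\})=0$ regardless of criticality, so the restriction is actually necessary and neither proof establishes the implication ``$\mathrm{Cap}_{\ab}(F)=0$ for some $F$ of measure zero $\Rightarrow$ $L$ critical'' --- but this is an issue with the statement, not with your proof.
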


\begin{proof}
Assume that $L$ is critical, and let $\{h_n\}_{n\in\N}$ be a (non-negative) null sequence for $\mathbf{a}$ converging locally uniformly to $h>0$ a null-state. Let $F\subseteq \Omega$ be compact, and assume by contradiction that $\mathrm{Cap}_{\mathbf{a}}(F)=c>0$. Then, for every non-negative function $u\in C_0^\infty(\Omega)$,

$$\mathbf{a}(u,u)\geq c\min_F(u).$$
Since $h_n\to h$ locally uniformly and $h$ is positive, the quantity $\min_F(h_n)$ is bounded from below by a positive constant, independently of $n$ large enough. Hence,

$$\mathbf{a}(h_n,h_n)>c'>0,\quad n\gg 1.$$
This contradicts the fact that $\mathbf{a}(h_n,h_n)\to 0$ since $\{h_n\}_{n\in\N}$ is a null sequence. Therefore, $\mathrm{Cap}_{\mathbf{a}}(F)=0$ for all compact $F$. 

Conversely, assume that $L$ is not critical and let $F\subseteq \Omega$. Since $\mathrm{Cap}_{\mathbf{a}}(F)$ is non-increasing with respect to the set $F$, in order to prove that $\mathrm{Cap}_{\mathbf{a}}(F)>0$, one can assume that $\mu(F)>0$. According to \cite[Theorem 1.4]{PiTi06}, there exists a positive, continuous function $\rho$ such that

$$\mathbf{a}(u,u)\geq \int_\Omega \rho u^2\,\mathrm{d}\mu ,\quad u\in C_0^\infty(\Omega).$$
Note that this means that $L$ has a weighted spectral gap. An approximation argument shows that this inequality also holds if $u$ is merely Lipschitz with compact support in $\Omega$. If now $F$ is a compact set in $\Omega$, this implies that

$$\mathbf{a}(u,u)\geq \mu(F)\min_F\rho \cdot \min_F u^2$$
for all $u$ Lipschitz with compact support. If $u\geq1$ on $F$, then one obtains

$$\mathbf{a}(u,u)\geq \mu(F)\min_F\rho,$$
which implies that

$$\mathrm{Cap}_{\mathbf{a}}(F)\geq \mu(F) \min_F\rho>0.$$

\end{proof}
 
 \begin{lemma}\label{lem:Cap_attained}
Let $\frac{1}{2}<r<R$. Then, 
$$
\mathrm{Cap}_{\mathbf{a}}(\bar{\Omega}_r,\Omega_R)
	=\mathbf{a}(\psi_{r,R},\psi_{r,R})\,.
$$
 \end{lemma}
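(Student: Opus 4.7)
The plan is to prove the two inequalities $\leq$ and $\geq$ separately. For the upper bound $\mathrm{Cap}_{\mathbf{a}}(\bar\Omega_r,\Omega_R) \leq \mathbf{a}(\psi_{r,R},\psi_{r,R})$, it suffices to check that $\psi_{r,R}\in\mathcal{H}(\bar\Omega_r,\Omega_R)$. Since $\varphi\to+\infty$ at infinity, $\Omega_R$ is relatively compact in $\Omega$; by construction $\psi_{r,R}$ is Hölder continuous, lies in $W^{1,2}(\Omega)$, equals $1$ on $\bar\Omega_r$, and vanishes outside $\Omega_R$. A standard approximation by Lipschitz compositions with the level sets of $\varphi$ (for instance $\max(\psi_{r,R}-\varepsilon,0)/(1-\varepsilon)$) produces a sequence in $\mathcal{L}(\bar\Omega_r,\Omega_R)$ that converges to $\psi_{r,R}$ in $W^{1,2}_0(\Omega_R)$, so $\psi_{r,R}\in\mathcal{H}(\bar\Omega_r,\Omega_R)$ and the upper bound follows from the definition of capacity.

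For the reverse inequality, fix an arbitrary $\psi\in\mathcal{H}(\bar\Omega_r,\Omega_R)$. A truncation step reduces to the case $0\leq\psi\leq 1$ with $\psi = 1$ on $\bar\Omega_r$ in the $W^{1,2}_0$-sense: indeed, replacing $\psi$ by $\min(\psi_+,1)$ only decreases $\mathbf{a}(\psi,\psi)$ since $\mathbf{a}$ is a Dirichlet form (unit contractions decrease energy), while preserving the constraint $\psi\in\mathcal{H}(\bar\Omega_r,\Omega_R)$. Set $\eta:=\psi-\psi_{r,R}$. By the constraints on the truncated $\psi$ and the explicit expression for $\psi_{r,R}$, the function $\eta$ vanishes on $\bar\Omega_r$ (both are $1$ there) and outside $\Omega_R$ (both are $0$ there), both in the $W^{1,2}_0$-sense; this places $\eta$ in $W^{1,2}_0(A(r,R))$.

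The key step is then to exploit that $\psi_{r,R}$ is $L$-harmonic on the annular region $A(r,R)$: this translates into
$$\mathbf{a}(\psi_{r,R},\eta)=0 \quad\text{for every } \eta\in C_0^\infty(A(r,R)),$$
and extends to all $\eta\in W^{1,2}_0(A(r,R))$ by the density of $C_0^\infty(A(r,R))$ in $W^{1,2}_0(A(r,R))$ and the continuity of $\mathbf{a}$ with respect to the $\mathbf{a}$-norm (which is controlled by the $W^{1,2}$-norm by local uniform ellipticity of $A$). Expanding
$$\mathbf{a}(\psi,\psi)=\mathbf{a}(\psi_{r,R},\psi_{r,R})+2\mathbf{a}(\psi_{r,R},\eta)+\mathbf{a}(\eta,\eta)\geq \mathbf{a}(\psi_{r,R},\psi_{r,R}),$$
and taking the infimum over admissible $\psi$ gives the desired inequality, with equality attained at $\psi_{r,R}$.

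The main technical obstacle is the rigorous verification that $\eta=\psi-\psi_{r,R}$ genuinely belongs to $W^{1,2}_0(A(r,R))$. One must combine the two one-sided $W^{1,2}_0$-sense identities ($\eta=0$ on $\bar\Omega_r$ and $\eta=0$ on $\Omega\setminus\Omega_R$) into a single compactly-supported-in-$A(r,R)$ approximation for $\eta$. The cleanest route is to approximate $\eta$ by products of the form $\chi_\delta\cdot\eta$, where $\chi_\delta$ is a Lipschitz cutoff built from $\varphi$ that equals $1$ on $\{r+\delta\leq\varphi\leq R-\delta\}$ and vanishes near $\{\varphi=r\}\cup\{\varphi=R\}$, and let $\delta\to 0$; convergence in $W^{1,2}_0(A(r,R))$ follows from the $W^{1,2}_0$-boundary information on $\eta$. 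Once this is settled, the rest of the argument is a standard Hilbert-space orthogonality computation.
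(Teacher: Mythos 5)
Your route is genuinely different from the paper's: instead of invoking the extremality theorem from \cite{LSW} (Theorem 4.1 there), which identifies the capacitary potential as the unique $L$-harmonic function in $W^{1,2}_0(U)$ with the right boundary data in the $W^{1,2}_0$-sense, you prove extremality by hand via the orthogonality expansion $\mathbf{a}(\psi,\psi)=\mathbf{a}(\psi_{r,R},\psi_{r,R})+2\mathbf{a}(\psi_{r,R},\eta)+\mathbf{a}(\eta,\eta)$. This is a classical and self-contained approach, but it carries an extra burden: besides verifying that $\psi_{r,R}$ is admissible (which both proofs must do), you must also establish $\eta=\psi-\psi_{r,R}\in W^{1,2}_0(A(r,R))$ for every admissible competitor $\psi$, a step the paper avoids entirely by letting the cited extremality theorem do the work.

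There is, however, a genuine gap in your upper bound. The function $\max(\psi_{r,R}-\varepsilon,0)/(1-\varepsilon)$ is a Lipschitz function \emph{of} $\psi_{r,R}$, but it is \emph{not} Lipschitz on $\Omega$: the Evans potential $\varphi$ is only locally H\"older continuous and in $W^{1,2}_{\loc}$ (De Giorgi--Nash--Moser regularity for merely measurable, locally elliptic $A$ gives nothing better), so $\psi_{r,R}$ and all its truncations inherit at most H\"older regularity. Consequently the proposed sequence does not lie in $\mathcal{L}(\bar\Omega_r,\Omega_R)$, and the membership $\psi_{r,R}\in\mathcal{H}(\bar\Omega_r,\Omega_R)$ does not follow as claimed. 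What is actually required is the paper's mollification step: approximate $\varphi$ itself by genuinely Lipschitz (in fact smooth) functions $\varphi_n$ on $\bar\Omega_R$, adjust their boundary levels by adding a small multiple of a fixed smooth $\rho$ and clipping, and only then build Lipschitz functions $\psi^n_{r,R}$ converging to $\psi_{r,R}$ in $W^{1,2}$-norm. The same regularity confusion recurs in your ``Lipschitz cutoff built from $\varphi$'' for the lower bound; there the cutoff need not be Lipschitz, but the convergence $\eta\,\nabla\chi_\delta\to 0$ in $L^2(A(r,R))$ as $\delta\to 0$ requires extracting a quantitative decay of $\eta$ near $\partial A(r,R)$ from the $W^{1,2}_0$-sense boundary information, which is not automatic and which your sketch leaves open. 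You rightly flag this as the main technical obstacle, but as written it remains one.
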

 
 \begin{proof}
 Let $F:=\bar{\Omega}_r$, $U:=\Omega_R$. According to \cite[Theorem 4.1]{LSW}, the capacity $\mathrm{Cap}_{\mathbf{a}}(F,U)$ is attained by the unique function $u$ in $W_0^{1,2}(U)$, such that $u=1$ on $F$ in the $W_0^{1,2}(U)$-sense, and such that $Lu=0$ weakly in $U\setminus F=A(r,R)$. Since $L\psi_{r,R}=0$ in $A(r,R)$, it is enough to show that $\psi_{r,R}\in W_0^{1,2}(U)$, and $\psi_{r,R}=1$ on $F$ in the $W_0^{1,2}(U)$-sense. By using well-known mollifier arguments, one can approximate the Evans potential $\varphi$ both uniformly, and in $W^{1,2}$ norm, by Lipschitz (in fact, even smooth) functions $\{\varphi_n\}_{n\in \N}$ on $\bar{U}$. Let $\rho$ be a smooth function on $\bar{U}$ such that $\rho|_F=1$ and $\rho|_{\partial U}=-1$. Up to replacing $\varphi_n$ by $\varphi_n+c_n\rho$, where $c_n=\max_{\partial U}(\varphi_n-r)+\max_{\partial F}(R-\varphi_n)$ (note that $c_n\to 0$), one can assume that $\varphi_n|_{\partial U}\leq R$ and $\varphi_n|_{\partial F}\geq r$. Then, up to replacing $\varphi_n$ by $\varphi_n\wedge r \vee R$, one can assume that $\varphi_n|_{\partial U}= R$ and $\varphi_n|_{\partial F}= r$. Define then a sequence $\{\psi^n_{r,R}\}_{n\in\N}$ by
$$
\psi^n_{r,R}(x)=\begin{cases}
\frac{R-\varphi_n(x)}{R-r},\qquad &x\in \overline{A(r,R)},\\
1,\qquad &x\in \Omega_r,\\
0,\qquad &x\notin \Omega_R.
\end{cases}
$$
The function $\psi_{r,R}^n$ is Lipschitz and belongs to $W^{1,2}_0(U)$, and as $n\to \infty$, $\psi_{r,R}^n$ converges uniformly and in $W^{1,2}$-norm on $A(r,R)$ to $\psi_{r,R}$. Since $\psi_{r,R}^n|_{F}=1$, it follows that $\psi_{r,R}=1$ on $F$ in the $W_0^{1,2}(U)$-sense. This concludes the proof.
 \end{proof}
\medskip
 
By Lemma \ref{lem:capacity}, for any fixed $r>\frac{1}{2}$, 
$$
0=\mathrm{Cap}_{\mathbf{a}}(\bar{\Omega}_r)
	=\lim_{R\to\infty}\mathrm{Cap}_{\mathbf{a}}(\bar{\Omega}_r,\Omega_R)\,.
$$
Hence, by Lemma \ref{lem:Cap_attained}, for every $r>\frac{1}{2}$,

\begin{equation}\label{Eq:Cap}
\lim_{R\to\infty}\int_{A(r,R)}\langle A\nabla \psi_{r,R},\nabla \psi_{r,R}\rangle\dmu=0.
\end{equation}
We now define our null-sequence as follows:
$$
\varphi_n
	:=\psi_{r_{n+1},R_{n+1}},\quad n\in\mathbb{N}^*\,,
$$
where the sequences $\{r_n\}_{n\in\mathbb{N}^*}$ and $\{R_n\}_{n\in\mathbb{N}^*}$ are defined recursively in the following way: first take $r_1>\frac{1}{2}$, and then define  $(r_{n})_{n\in\mathbb{N}^*}$ and $(R_{n})_{n\in\mathbb{N}^*}$ such that the following conditions are satisfied:

\begin{itemize}

\item[(H1)] $r_{n+1}>R_n$.

\item[(H2)] $R_{n+1}>r_{n+1}$.

\item[(H3)] $R_{n}$ is large enough so that $\frac{1}{R_{n}-r_{n}}\leq \frac{2}{R_{n}}.$

\item[(H4)] $R_{n}$ is large enough so that
$$
\int_{A(r_n,R_n)}\langle A\nabla \psi_{r_{n},R_{n}},\nabla \psi_{r_{n},R_{n}}\rangle\dmu <\frac{1}{n}\,,
$$
(which is possible according to \eqref{Eq:Cap}). 

\end{itemize}
Then, clearly $\{\varphi_n\}_{n\in\mathbb{N}}$ is a null-sequence, $0\leq \varphi_n\leq 1$, and $\varphi_{n+1}(1-\varphi_{n-1})=1$ on $\supp(\nabla\varphi_n)$. Also, notice that $A(r_n,R_n)\cap K=\emptyset$ for every $n$.

%%%%%%%%%%%%%%%%%%%%%%%%%%%%%%%%%%%%%%%%%%%%%%%%%%%%%%%%%%%%%%%%%%%%%%%%%%%%%%%%%%%%%
\subsection{The null-sequence \texorpdfstring{$\{\varphi_n\}_{n\in\mathbb{N}}$}{varphin} is a good cut-off sequence}
\label{Ssec:AdNull}
%%%%%%%%%%%%%%%%%%%%%%%%%%%%%%%%%%%%%%%%%%%%%%%%%%%%%%%%%%%%%%%%%%%%%%%%%%%%%%%%%%%%%

%%%%%%%%%%%%%%%%%%
In this paragraph, as indicated by its title, we prove that the null sequence $\{\varphi_n\}_{n\in\NM}$ constructed in the previous paragraph is indeed a good cut-off sequence. This will conclude the proof of Theorem \ref{thm:good_critical}. Obviously, the sequence $\{\varphi_n\}_{n\in\NM}$ satisfies (i), (ii) of Definition \ref{Def:AdmSeq}. In addition, it follows by construction that $\bigcup_{n\in\N}\supp(\varphi_n)=\Omega$ as $\lim_{x\to\infty}\varphi(x)=+\infty$ holds for the Evans potential. Hence, the only non-trivial point to check is that (iii) holds, and this is the purpose of the next proposition:

\begin{proposition}\label{Prop:Hardy}

There exists a constant $C>0$ such that, for every $n\in\mathbb{N}$ and every function $w\in \mathcal{D}(\ab)$,

\begin{equation}\label{Eq:wH_crit}
\int_\Omega |w|^2|\nabla \varphi_n|_A^2 \dmu \leq C\|w \|_{\ab}^2.
\end{equation}

\end{proposition}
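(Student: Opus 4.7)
The plan is to bound $\int_\Omega w^2|\nabla\varphi_n|_A^2\,\mathrm{d}\mu$ by a constant multiple of $\|w\|_{\mathbf{a}}^2$, uniformly in $n$, by reducing to a \emph{ground-state Hardy inequality} for the Evans potential $\varphi$ on $\Omega\setminus K$ and handling the excluded set $K$ via a fixed smooth cutoff. The first step is a pointwise estimate on $|\nabla\varphi_n|_A^2$: since $\varphi_n=\psi_{r_{n+1},R_{n+1}}$ is affine in $\varphi$ on $A(r_{n+1},R_{n+1})$ and locally constant outside, $|\nabla\varphi_n|_A^2=(R_{n+1}-r_{n+1})^{-2}|\nabla\varphi|_A^2$ on that annulus and vanishes elsewhere. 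Condition (H3) together with $\varphi<R_{n+1}$ on the annulus yields the key pointwise bound
$$
|\nabla\varphi_n|_A^2\;\leq\;\frac{4\,|\nabla\varphi|_A^2}{\varphi^2}\,\mathbf{1}_{A(r_{n+1},R_{n+1})},
$$
so it suffices to prove a uniform bound on $\int_\Omega w^2\,\frac{|\nabla\varphi|_A^2}{\varphi^2}\,\mathbf{1}_{A(r_{n+1},R_{n+1})}\,\mathrm{d}\mu$.

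The second step is the ground-state Hardy inequality: for any Lipschitz $f$ with compact support in $\Omega\setminus K$,
$$
\int \frac{f^2\,|\nabla\varphi|_A^2}{\varphi^2}\,\mathrm{d}\mu \;\leq\; 4\int |\nabla f|_A^2\,\mathrm{d}\mu.
$$
I would prove this by testing the weak equation $L\varphi=0$ on $\Omega\setminus K$ against $g:=f^2/\varphi$, which is a valid test function since $\varphi\geq 1/2$ on $\Omega\setminus K$, yielding the identity
$$
\int \frac{f^2\,|\nabla\varphi|_A^2}{\varphi^2}\,\mathrm{d}\mu \;=\; 2\int \frac{f}{\varphi}\,\langle A\nabla\varphi,\nabla f\rangle\,\mathrm{d}\mu,
$$
and then applying Cauchy--Schwarz in the bilinear form $\langle A\cdot,\cdot\rangle$.

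Finally, to reach an arbitrary $w\in C_0^\infty(\Omega)$, I would fix once and for all a smooth cutoff $\chi:\Omega\to[0,1]$ vanishing on a neighbourhood of $K$ and equal to $1$ on $\{\varphi\geq r_1\}$. Since $\varphi\to\infty$ at infinity, the sublevel set $\{\varphi\leq r_1\}$ is relatively compact, so $\chi$ may be chosen smooth with compactly supported gradient and hence $|\nabla\chi|_A$ bounded. Because $\chi\equiv 1$ on every annulus $A(r_{n+1},R_{n+1})$, the target integral above is unchanged by replacing $w$ with $\chi w$; and since $\chi w$ has compact support in $\Omega\setminus K$, the Hardy inequality of the second step bounds it by $16\int|\nabla(\chi w)|_A^2\,\mathrm{d}\mu$, which the Leibniz rule and the boundedness of $|\nabla\chi|_A$ control by a constant multiple of $\mathbf{a}(w,w)+\|w\|_{2,\mu}^2=\|w\|_{\mathbf{a}}^2$. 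The passage from $C_0^\infty(\Omega)$ to $w\in\mathcal{D}(\mathbf{a})$ is by density in $\mathbf{a}$-norm and Fatou's lemma on the left-hand side. The main obstacle is the presence of $K$, on which $\varphi$ is not $L$-harmonic: the fixed cutoff $\chi$ is what isolates $K$, and a single such cutoff suffices for all $n$ precisely because the properness of the Evans potential confines every annulus $A(r_{n+1},R_{n+1})$ to the fixed open set $\{\varphi\geq r_1\}$.
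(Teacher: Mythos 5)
Your proof is correct and follows the same architecture as the paper's: the pointwise bound $|\nabla\varphi_n|_A^2\lesssim\varphi^{-2}|\nabla\varphi|_A^2\,\mathbf{1}_{A(r_{n+1},R_{n+1})}$ via (H3), a ground-state Hardy inequality for the Evans potential on $\Omega\setminus K$, a single cutoff $\chi$ (yours is $1-\chi_{\text{paper}}$, immaterial) whose gradient is supported in a fixed compact annulus, Leibniz, and density plus Fatou. The one genuine difference is in how you obtain the Hardy inequality: the paper observes that $\varphi^{1/2}$ is a positive solution of $\bigl(L-\tfrac{|\nabla\varphi|_A^2}{4\varphi^2}\bigr)u=0$ in $\Omega\setminus K$ and invokes the Agmon--Allegretto--Moss--Piepenbrink theorem, whereas you derive it directly by testing the weak equation $L\varphi=0$ against $g=f^2/\varphi$ and applying Cauchy--Schwarz in the $A$-inner product. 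Your route is more elementary and self-contained (it rederives exactly the instance of the theorem that is needed, with the same constant $4$), at the cost of a small technical point you correctly flag: $f^2/\varphi$ is not smooth but lies in $W^{1,2}$ with compact support in $\Omega\setminus K$ (since $\varphi\geq\tfrac12$ there), so one uses that the $L$-harmonicity of $\varphi$ extends by density to such test functions. Both arguments are valid; the paper's is shorter given the cited lemma, yours avoids the citation entirely.
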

%%%%%%%%%%%%%%%%%%

%%%%%%%%%%%%%%%%%%
\begin{proof}
Recall that $\varphi$ is the Evans potential from Lemma \ref{Lem:Evans}. First notice that

$$|\nabla \varphi_n|_A^2=|R_{n+1}-r_{n+1}|^{-2}|\nabla \varphi|_A^2 \mathbf{1}_{A(r_{n+1},R_{n+1})}.$$
We claim that for some constant $C$ and every $n\in\mathbb{N}^*$,

\begin{equation}\label{Eq:Hardy3}
|R_n-r_n|^{-1}
	\leq \frac{C}{\varphi(x)},\qquad x\in A(r_n,R_n).
\end{equation}
Indeed, by definition of the null-sequence $\{\varphi_n\}_{n\in\mathbb{N}}$,

$$\frac{1}{R_n}
	\leq \frac{1}{\varphi(x)}\leq \frac{1}{r_n}
	,\qquad x\in A(r_n,R_n).
$$
Using the hypothesis (H3) on the sequence $\{R_n\}_{n\in\mathbb{N}^*}$, we see that \eqref{Eq:Hardy3} holds with $C=2$. Consequently,

$$|\nabla \varphi_n|_A^2 
	\leq C\frac{|\nabla \varphi|_A^2}{\varphi^2}\mathbf{1}_{A(r_{n+1},R_{n+1})}.
$$
Next, we recall the following universal Hardy inequality: for every $w\in C_0^\infty(\Omega\setminus K)$,

\begin{equation}\label{Eq:Hardy}
\int_\Omega |w|^2\frac{|\nabla \varphi|_A^2}{4\varphi^2}\,\dmu
	\leq \int_\Omega |\nabla w|_A^2\,\dmu,
\end{equation}
where $K$ is the compact set in the definition of the Evans potential $\varphi$.
Inequality \eqref{Eq:Hardy} follows from the fact that since  $\varphi$ is a positive $L$-harmonic function on $\Omega\setminus K$, the function $\varphi^{1/2}$ is a positive solution of $(L-\frac{|\nabla \varphi|_A^2}{4\varphi^2})u=0$ in $\Omega\setminus K$, together with the celebrated Agmon-Allegretto-Moss-Piepenbrink theorem (see \cite{DeFrPi14}, especially Lemma 5.1 therein). 

\medskip

Let $\chi$ be a smooth, compactly supported function which is equal to $1$ on $K$, and such that its support is included in $\Omega_{r_1}$. Let $w\in C_0^\infty(\Omega)$. Then, using that $A\in L_{\loc}^\infty$ and elementary manipulations, one has

$$\begin{array}{rcl}
\int_\Omega |w|^2|\nabla \varphi_n|_A^2\dmu &\lesssim &\int_{A(r_{n+1},R_{n+1})} |w|^2\frac{|\nabla \varphi|_A^2}{4\varphi^2}\,\dmu\\\\
&\lesssim&  \int_{A(r_{n+1},R_{n+1})} |\chi w|^2\frac{|\nabla \varphi|_A^2}{4\varphi^2}\,\dmu+\int_{A(r_{n+1},R_{n+1})} |(1-\chi)w|^2\frac{|\nabla \varphi|_A^2}{4\varphi^2}\,\dmu\\\\
&\lesssim& 0+\int_\Omega |\nabla ((1-\chi)w)|_A^2\,\dmu\\\\
&\lesssim& (\|w\|^2_{2,\mu}+ \int_\Omega |\nabla w|_A^2\,\dmu)\\\\
&\lesssim& \|w\|^2_{\ab},
\end{array}$$
where in the third line we have used that the support of $\chi$ is disjoint from $A(r_{n+1},R_{n+1})$, and in the fourth one we have used the Hardy inequality \eqref{Eq:Hardy} for $(1-\chi)w$. Thus, we have proved that for all $w\in C_0^\infty(\Omega)$, and for all $n\in \mathbb{N}$,

\begin{equation}\label{Eq:Hardy2}
\int_\Omega |w|^2|\nabla \varphi_n|_A^2\dmu\leq C\|w\|^2_{\ab}.
\end{equation}
Let now $w$ be an element of $\mathcal{D}(\ab)$, and let $\{w_k\}_{k\in\mathbb{N}}$ be a sequence of smooth, compactly supported functions such that $\{w_k\}_{k\in\mathbb{N}}$ converges to $w$ in the $\ab$-norm; in particular, it converges to $w$ in $L^2(\Omega,\dmu)$, hence almost everywhere, and by the Fatou lemma,

\begin{align*}
\int_\Omega |w|^2|\nabla \varphi_n|_A^2\dmu  \leq &\liminf_{k\to\infty} \int_\Omega  |w_k|^2|\nabla \varphi_n|_A^2\dmu \\
 \leq & \liminf_{k\to\infty} C  \|w_k\|^2_{\ab}\\
 \leq &C \|w\|^2_{\ab}.
\end{align*}
\end{proof}
%%%%%%%%%%%%%%%%%%

\textbf{Proof of Theorem~\ref{thm:good_critical}.} Denote $L:=-\diver\! (A\nabla\cdot )$ on $L^2(\Gw,\dmu)$. By Lemma \ref{Lem:AdSeq-a-q} and (ii) in Lemma \ref{Lem:AdSeq-crit}, a reformulation of Theorem \ref{thm:good_critical} is that there exists a good cut-off sequence $\{\varphi_n\}_{n\in\mathbb{N}}$ for $(L,1)$, such that $\{\varphi_n\}_{n\in\mathbb{N}}$ is a null sequence for $L$. In Section~\ref{Ssec:Null}, a null-sequence $\{\varphi_n\}_{n\in\NM}$ of $L$ was constructed and due to Proposition~\ref{Prop:Hardy}, this sequence is also a good cut-off sequence for $(L,1)$. This concludes the proof.
\hfill$\Box$

%%%%%%%%%%%%%%%%%%%%%%%%%%%%%%%%%%%%%%%%%%%%%%%%%%%%%%%%%%%%%%%%%%%%%%%%%%%%%%%%%%%%%
%%%%%%%%%%%%%%%%%%%%%%%%%%%%%%%%%%%%%%%%%%%%%%%%%%%%%%%%%%%%%%%%%%%%%%%%%%%%%%%%%%%%%
\section{Generalized eigenfunctions with subexponential growth}
\label{Sec:subexp}
%%%%%%%%%%%%%%%%%%%%%%%%%%%%%%%%%%%%%%%%%%%%%%%%%%%%%%%%%%%%%%%%%%%%%%%%%%%%%%%%%%%%%
%%%%%%%%%%%%%%%%%%%%%%%%%%%%%%%%%%%%%%%%%%%%%%%%%%%%%%%%%%%%%%%%%%%%%%%%%%%%%%%%%%%%%

Before we present a consequence of Theorem~\ref{thm:main}, we recall a definition from \cite{BoLeSt09}: a function $J:\mathbb{N}\to[0,+\infty)$ is called {\em subexponentially bounded} if for every $\alpha>0$, there exists a constant $C_\alpha$ such that for all $n\in\mathbb{N}$,

$$J(n)\leq C_\alpha \mathrm{e}^{\alpha n}.$$
We have the following consequence of Theorem \ref{thm:main}:

%%%%%%%%%%%%%%%%%%
\begin{corollary}\label{Cor:Schnol3}

Let $H$ be of the form \eqref{Eq:Schro}, and let $u\in W^{1,2}_{\loc}(\Omega)$ be a generalized eigenfunction of the operator $H$, associated with the eigenvalue $\lambda\in \R$. Let us assume that $\{\varphi_n\}_{n\in \N}$ is an admissible cut-off sequence for $(H,1)$. Assume that the function $J(n):=\|\varphi_n u\|_{2,m}$ is subexponentially bounded, and that
$$
\lim_{n\to\infty}\frac{\sqrt{\int_\Omega |u|^2(|\nabla \varphi_{n-1}|^2_A+|\nabla \varphi_{n}|_A^2+|\nabla \varphi_{n+1}|_A^2)\dm}}{\|\varphi_n u\|_{2,m}}=0.
$$
Then, $\lambda\in\sigma(H)$.

\end{corollary}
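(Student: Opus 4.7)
The plan is to apply Theorem~\ref{thm:main}(ii) with $\varphi \equiv 1$. Since by assumption the second summand in the numerator of the condition of Theorem~\ref{thm:main}(ii) tends to zero, it suffices to exhibit a subsequence $\{n_k\}$ along which $\|u\|_{L^2(A_{n_k},\dm)} / \|\varphi_{n_k} u\|_{2,m} \to 0$. Writing $J(n) := \|\varphi_n u\|_{2,m}$, the argument splits naturally into a geometric step producing a bound on $\|u\|_{L^2(A_n,\dm)}$ in terms of $J$, and an elementary analytic step exploiting subexponential boundedness.

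For the geometric step, I would first observe that property (ii) of Definition~\ref{Def:AdmSeq} (which, since $\varphi\equiv 1$, says $\varphi_{k+1}\equiv 1$ on $\supp(\varphi_k)$) forces the sequence $\{\varphi_n\}$ to be pointwise non-decreasing (so that $J$ is non-decreasing too), and, crucially, implies that on the open set $\{\varphi_{n+1}(1-\varphi_{n-1})>0\}$ one has simultaneously $\varphi_{n+2}\equiv 1$ and $\varphi_{n-2}\equiv 0$. Since its boundary has Lebesgue measure zero, this open set equals $A_n$ up to a null set. Integrating the non-negative function $\varphi_{n+2}^2-\varphi_{n-2}^2$ against $|u|^2\dm$ over $\Omega$ and discarding everything outside the interior of $A_n$, where this integrand equals $1$, yields
\begin{equation*}
\|u\|_{L^2(A_n,\dm)}^2 \;\leq\; J(n+2)^2 - J(n-2)^2.
\end{equation*}

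For the analytic step, I would invoke the classical Shnol trick: for every $\varepsilon > 0$ there must exist infinitely many $n$ satisfying $J(n+2)^2 \leq (1+\varepsilon) J(n-2)^2$. Otherwise, the reverse inequality would eventually hold for all large $n$, and iterating it along the arithmetic progression $\{N+4k\}_{k\geq 0}$ would yield $J(N+4k)^2 > (1+\varepsilon)^k J(N)^2$, contradicting the subexponential boundedness of $J$. Combining this with the geometric estimate and the monotonicity $J(n-2)\leq J(n)$ gives $\|u\|_{L^2(A_n,\dm)}^2/J(n)^2\leq \varepsilon$ along the corresponding subsequence; a standard diagonal extraction over $\varepsilon\to 0^+$ then produces the sought-after subsequence and the corollary follows from Theorem~\ref{thm:main}(ii).

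The main technical subtlety lies in the geometric step, as it requires carefully unpacking the plateau/monotonicity structure encoded in Definition~\ref{Def:AdmSeq}(ii) in order to produce a sharp bound relating $\|u\|_{L^2(A_n,\dm)}$ to consecutive values of $J$; once this is in hand, the rest is a transparent adaptation of the reasoning underlying the classical Shnol-type results of \cite{BoLeSt09}.
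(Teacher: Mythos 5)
Your proposal follows essentially the same route as the paper's proof: reduce via Theorem~\ref{thm:main}(ii) to showing $\liminf_n \|u\|_{L^2(A_n,\dm)}/\|\varphi_n u\|_{2,m}=0$, bound $\|u\|_{L^2(A_n,\dm)}$ by a difference of consecutive values of $J$ using the plateau structure of Definition~\ref{Def:AdmSeq}(ii), and then run the classical Shnol pigeonhole argument on the subexponentially bounded sequence. The only difference worth noting is that you derive the pigeonhole step directly rather than citing \cite[Lemma~4.2]{BoLeSt09}, and you integrate $\varphi_{n+2}^2-\varphi_{n-2}^2$ instead of indicator functions of the supports $B_n$, which is a cleaner variant that keeps the squares bookkeeping correct (the paper's displayed estimate $\|u\|_{L^2(A_n,\dm)}\le\tilde J(n+3)-\tilde J(n-1)$ is in fact off by a square root, though harmlessly so).

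One small flaw: your assertion that $\partial A_n$ has Lebesgue measure zero is both unjustified and unnecessary. In fact the identities $\varphi_{n+2}\equiv 1$ and $\varphi_{n-2}\equiv 0$ hold on all of $A_n$, not merely on its interior. Indeed $A_n\subseteq\supp(\varphi_{n+1})$, so $\varphi_{n+2}=1$ there; and if $x\in A_n$ with $\varphi_{n-2}(x)>0$, then by continuity $\varphi_{n-2}>0$ on a neighbourhood $V$ of $x$, hence $V\subseteq\supp(\varphi_{n-2})$ forces $\varphi_{n-1}\equiv 1$ on $V$, so $V$ is disjoint from $\{\varphi_{n+1}(1-\varphi_{n-1})>0\}$, contradicting $x\in A_n$. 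With this replacement, your geometric estimate $\|u\|_{L^2(A_n,\dm)}^2\le J(n+2)^2-J(n-2)^2$ holds outright and the rest of your argument goes through unchanged.
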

%%%%%%%%%%%%%%%%%%

%%%%%%%%%%%%%%%%%%
\begin{remark}
{\em
Under the assumptions that the function $J(n):=\|\varphi_n u\|_{2,m}$ is subexponentially bounded, and $\|\varphi_n u\|_{2,m}\asymp \|\varphi_{n+1} u\|_{2,m} \asymp\|\varphi_{n-1} u\|_{2,m}$ as $n\to \infty$ (which in practice is often satisfied), then the assumption $(i)$ with $\varphi\equiv 1$ in Theorem~\ref{thm:main} implies $(ii)$ in Theorem~\ref{thm:main}. In fact, as will be proved in the course of the proof of Corollary~\ref{Cor:Schnol3}, if $J(n)$ is subexponentially bounded then

$$\liminf_{n\to\infty}\frac{\|u\|_{L^2(A_n,m)}}{\|\varphi_n u\|_{2,m}}=0.$$
However, in general it is not possible to compare the two assumptions $(i)$ and $(ii)$ in Theorem~\ref{thm:main}.
}
\end{remark}
%%%%%%%%%%%%%%%%%%

%%%%%%%%%%%%%%%%%%
\begin{proof}
According to Theorem~\ref{thm:main}, (ii), it is enough to prove that
$$\liminf_{n\to\infty}\frac{\|u\|_{L^2(A_n,\dm)}}{\|\varphi_n u\|_{2,m}}
	=0\,.
$$
Let $B_n:=\mathrm{supp}(\varphi_n)$. Notice that if $\tilde{J}(n):=\|u\mathbf{1}_{B_{n-1}}\|_{2,m}$, then
$$\tilde{J}\leq J,
$$
as follows from the assumption that $\varphi_n|_{B_{n-1}}\equiv 1$. Hence, $\tilde{J}$ is also subexponentially bounded. Notice also that since $A_n\subset\mathrm{supp}(\varphi_{n+1})\setminus \{\varphi_{n-1}=1\}\subset  B_{n+2}\setminus B_{n-2}$, the estimate
$$
\|u\|_{L^2(A_n,\dm)}
	\leq \tilde{J}(n+3)-\tilde{J}(n-1)\,.
$$
holds. Also,we have
$$\|\varphi_nu\|_{2,m}
	\geq \tilde{J}(n-1)
$$
implying
$$
\frac{\|u\|_{L^2(A_n,\dm)}}{\|\varphi_n u\|_{2,m}}
	\leq \frac{\tilde{J}(n+3)-\tilde{J}(n-1)}{\tilde{J}(n-1)}
	=\frac{\tilde{J}(n+3)}{\tilde{J}(n-1)}-1\,.
$$
Since $\tilde{J}$ is subexponentially bounded, according to \cite[Lemma 4.2]{BoLeSt09}, one has
$$\liminf_{n\to\infty}\frac{\tilde{J}(n+3)}{\tilde{J}(n-1)}
	=1\,,
$$
hence
$$\liminf_{n\to\infty}\frac{\|u\|_{L^2(A_n,\dmu)}}{\|\varphi_n u\|_{2,\mu}}
	=0\,.
$$
We conclude that $\lambda\in\sigma(H)$ from (ii) in Theorem~\ref{thm:main}.
\end{proof}
%%%%%%%%%%%%%%%%%%

\medskip

As a special case of Corollary~\ref{Cor:Schnol3}, one can recover in our setting the following classical result of A.~Boutet de Monvel, D.~Lenz and P.~Stollmann \cite[Theorem~4.4]{BoLeSt09}, which generalizes earlier results due to {\`E}. Schnol \cite{Shn57} and B. Simon \cite{Sim81}: we recall according to the terminology introduced therein that a function $u : \Omega\to\R$ is called {\em subexponentially bounded} if for a fixed point $p\in\Omega$ and for any $\alpha>0$, $\mathrm{e}^{-\alpha d_A(p,\cdot)}u\in L^2(\Omega,d\mu)$. Here, $d_A(x,y)$ is the intrinsic metric with respect to $A$, defined by
$$
d_A(x,y)
	:=\sup\{|\psi(x)-\psi(y)|\,;\,\psi\in \mathcal{C}(\Omega)\cap W^{1,2}_{loc}(\Omega),\,\langle A\nabla \varphi,\nabla\varphi\rangle\leq 1\,a.e.\}.
$$
We make the assumption that $d_A(x,y)$ is a metric (that is, the above supremum is always finite) and induces the same topology on $\Omega$. As a consequence of these assumptions (c.f. \cite[p. 193]{BoLeSt09}), one has
$$
|\nabla d_A(\cdot, y)|_A\leq 1\,a.e.,
$$
for every $y\in \Omega$.

%%%%%%%%%%%%%%%%%%
\begin{corollary}

Let $\Omega$ be complete, $H$ be of the form \eqref{Eq:Schro}, and let $u\in W^{1,2}_{\loc}(\Omega)$ be a generalized eigenfunction of the operator $H$, associated with the eigenvalue $\lambda\in \R$. Assume that the intrinsic metric $d_A$ determines the same topology on $\Omega$ as the usual one. If $u$ subexponentially bounded, then $\lambda\in\sigma(H)$.

\end{corollary}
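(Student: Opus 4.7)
The strategy is to apply Corollary~\ref{Cor:Schnol3} with an admissible cut-off sequence $\{\varphi_n\}_{n\in\N}$ for $(H,1)$ built out of the intrinsic distance. Fix a basepoint $p\in\Omega$ and set $\rho(x):=d_A(p,x)$. Since $d_A$ induces the given topology on $\Omega$ and $\Omega$ is complete, the closed $d_A$-balls $\overline{B_n^{d_A}(p)}$ are compact, so the piecewise-linear cut-offs
$$
\varphi_n(x) \;:=\; \min\bigl\{1,\, (n+1-\rho(x))_+\bigr\}
$$
all lie in $C_0(\Omega)\cap W^{1,2}_{loc}(\Omega)$. By construction $\varphi_n\equiv 1$ on $\{\rho\leq n\}$, $\supp(\varphi_n)\subseteq\{\rho\leq n+1\}$, and, thanks to $|\nabla\rho|_A\leq 1$ a.e., one has $|\nabla\varphi_n|_A\leq 1$ a.e. Conditions (i) and (ii) of Definition~\ref{Def:AdmSeq} with $\varphi\equiv 1$ are then immediate, and the weak Hardy inequality is supplied by the crude bound
$$
\int_\Omega |v|^2|\nabla\varphi_n|_A^2\,\dm \;\leq\; \|v\|_{2,m}^2 \;\leq\; \|v\|_Q^2,
$$
so that $\{\varphi_n\}_{n\in\N}$ is admissible for $(H,1)$.

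Next, the subexponential boundedness of $u$ (i.e.\ $e^{-\alpha\rho}u\in L^2(\Omega,\dm)$ for every $\alpha>0$) transfers directly to $\tilde J(n):=\|u\mathbf{1}_{\{\rho\leq n\}}\|_{2,m}$ and, via $J(n):=\|\varphi_n u\|_{2,m}\leq\tilde J(n+1)$, also to $J$: for each $\alpha>0$,
$$
\tilde J(n)^2 \;\leq\; e^{2\alpha n}\int_\Omega e^{-2\alpha\rho}|u|^2\,\dm \;<\;\infty.
$$
In particular $\tilde J$ is a non-decreasing, subexponentially bounded function, which is the only probabilistic-flavor input to what follows.

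Finally, we check the gradient-ratio hypothesis of Corollary~\ref{Cor:Schnol3}. Since $|\nabla\varphi_k|_A$ is supported in the shell $\{k\leq\rho\leq k+1\}$, the three shells corresponding to $k=n-1,n,n+1$ lie in $\{n-1\leq\rho\leq n+2\}$, so
$$
\sum_{k=n-1}^{n+1}\int_\Omega |u|^2|\nabla\varphi_k|_A^2\,\dm \;\leq\; \tilde J(n+2)^2 - \tilde J(n-1)^2,
\qquad
\|\varphi_n u\|_{2,m}\;\geq\;\tilde J(n).
$$
By \cite[Lemma~4.2]{BoLeSt09}, subexponential boundedness of $\tilde J$ produces a subsequence $(n_k)$ along which $\tilde J(n_k+2)/\tilde J(n_k-1)\to 1$; along $(n_k)$ one then has
$$
\frac{\tilde J(n_k+2)^2-\tilde J(n_k-1)^2}{\tilde J(n_k)^2}
\;\leq\;
\frac{\tilde J(n_k+2)^2}{\tilde J(n_k-1)^2}-1 \;\longrightarrow\; 0,
$$
which is the condition needed in Corollary~\ref{Cor:Schnol3} (understood, as in its proof, as the vanishing of a $\liminf$, which is what Theorem~\ref{thm:main}(ii) actually requires). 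This yields $\lambda\in\sigma(H)$.

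The only non-cosmetic step is the last one: extracting a subsequence along which the annular-shell mass of $|u|^2$ becomes negligible compared to the bulk ball mass. This is exactly the content of \cite[Lemma~4.2]{BoLeSt09} and is the classical mechanism by which ``subexponential growth'' turns into a spectral statement; everything else in the argument is bookkeeping that the intrinsic distance produces a cut-off sequence whose gradients are automatically controlled both pointwise and in the weak Hardy sense.
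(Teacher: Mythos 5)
Your proof is correct and follows essentially the same route as the paper's: build the cut-off sequence from the intrinsic distance as linear ramps (you use width-$1$ shells, the paper uses width $b<1$; either works), observe that $|\nabla\varphi_n|_A\lesssim 1$ makes the weak Hardy inequality trivial, transfer subexponential boundedness of $u$ to $J(n)=\|\varphi_nu\|_{2,m}$, and invoke \cite[Lemma~4.2]{BoLeSt09} (which is exactly what the proof of Corollary~\ref{Cor:Schnol3} rests on) to extract a subsequence killing the shell-to-ball ratio.

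One small omission relative to the paper: the paper first disposes of the case in which $\Omega$ has finite $d_A$-diameter (there $u$ subexponentially bounded gives $u\in L^2$ directly, so $\lambda$ is an eigenvalue). This matters because when the diameter is finite your ramps $\varphi_n$ become identically $1$ for large $n$, hence are not in $C_0(\Omega)$ on a non-compact $\Omega$, and the construction degenerates. Adding that one-line reduction makes the proof fully rigorous; otherwise your argument matches the paper's.
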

%%%%%%%%%%%%%%%%%%

%%%%%%%%%%%%%%%%%%
\begin{proof}
Without loss of generality, one can assume that $\Omega$ has infinite diameter for the distance $d_A$ (otherwise, $u$ subexponentially bounded implies that $u$ is in $L^2$, hence $\lambda$ is an eigenvalue of $H$). Let $\varphi(x)=d_A(x,p)$, where $d_A$ is the intrinsic metric and $p\in\Omega$ is a fixed point. The assumption that the topology on $\Omega$ endowed with the intrinsic distance is the same as the original one, implies that $\varphi$ is an exhaustion. By the above,

\begin{equation}\label{grad}
|\nabla \varphi|_A\leq 1\,a.e.
\end{equation}
almost everywhere. We choose $r_n=n$, $R_n=n+b$, for some fixed value $b\in(0,1)$, and let $\varphi_n:=\psi_{r_n,R_n}$, where we recall that $\psi_{r_n,R_n}$ is defined by

$$\psi^n_{r,R}(x)=\begin{cases}
\frac{R-\varphi_n(x)}{R-r},\qquad &x\in \overline{A(r,R)},\\
1,\qquad &x\in \Omega_r,\\
0,\qquad &x\notin \Omega_R.
\end{cases}$$
Since for every $n\in\mathbb{N}$,

$$|\nabla \varphi_n|_A
	\leq C_b\,\,\,a.e.\,,
$$
the weak Hardy inequality \eqref{Eq:WH} is trivially satisfied. According to \cite[Lemma 4.3]{BoLeSt09}, the function 

$$J(n)
	:=\|\varphi_n u\|_{2,m}\leq \int_{B(p,n+1)}|u|^2\dm
$$
is subexponentially bounded. Also, since for $k=n-1,n,n+1$, $\nabla \varphi_k$ has support included in $A_n=\mathrm{supp}(\varphi_{n+1}(1-\varphi_{n-1}))$, one has

$$\frac{\sum_{k=n-1}^{n+1}\left(\int_\Omega |u|^2 |\nabla \varphi_k|_A^2\right)^{1/2}}{\|\varphi_n u\|_{2,m}}
	\leq C_b \frac{\|u\|_{L^2(A_n,\dm)}}{\|\varphi_n u\|_{2,m}}\,.
$$
It was shown in the proof of Corollary~\ref{Cor:Schnol3} that

$$\liminf_{n\to\infty}\frac{\|u\|_{L^2(A_n,\dm)}}{\|\varphi_n u||_{2,m}}
	=0\,.
$$
Hence,

$$\liminf_{n\to\infty} \frac{\sum_{k=n-1}^{n+1}\left(\int_\Omega |u|^2|\nabla \varphi_k|^2_A\dm\right)^{1/2}}{\|\varphi_n u\|_{2,m}}
	\leq C_b\; \liminf_{n\to\infty}\frac{\|u\|_{L^2(A_n,m)}}{\|\varphi_n u\|_{2,m}}=0\,,
$$
and the result follows from Corollary~\ref{Cor:Schnol3}.
\end{proof}
%%%%%%%%%%%%%%%%%%

%%%%%%%%%%%%%%%%%%%%%%%%%%%%%%%%%%%%%%%%%%%%%%%%%%%%%%%%%%%%%%%%%%%%%%%%%%%%%%%%%%%%%
%\subsection{The subexponential growth condition and comparison with existing results}
%\label{Ssec:SubExpComp}
%%%%%%%%%%%%%%%%%%%%%%%%%%%%%%%%%%%%%%%%%%%%%%%%%%%%%%%%%%%%%%%%%%%%%%%%%%%%%%%%%%%%%

%%%%%%%%%%%%%%%%%%

\section{Discussion - Examples}
\label{Sec:Examples}

We first provide an example showing that our results are stronger than those of \cite{BoLeSt09}. The main result in \cite{BoLeSt09} (Theorem 4.4 therein) is applicable provided the generalized eigenfunction $u$ is such that for any $\alpha>0$, $e^{-\alpha d(x_0,\cdot)}u\in L^2$ ($u$ is ``subexponentially bounded'' in the terminology of \cite{BoLeSt09}). 

%%%%%%%%%%%%%%%%%%
\begin{example}\label{Ex:parabolic}

There is a complete Riemannian manifold for which (i) in Theorem \ref{thm:main} gives $0\in \sigma(\Delta)$, but the associated generalized eigenfunction $u\equiv 1$ is not subexponentially bounded.

\end{example}
%%%%%%%%%%%%%%%%%%

%%%%%%%%%%%%%%%%%%
\begin{proof}
Note that we take the sign convention for $\Delta$ that makes it a non-negative operator. A first class of examples follows from a result of R. Brooks (\cite{B81}): it is enough to consider a covering of a compact Riemannian manifold by an amenable group having exponential growth (such groups are known to exists, for instance the so-called lamplighter group). 

\medskip

A different class of examples can be obtained as follows: it is enough to find a complete parabolic Riemannian manifold having exponential volume growth. Indeed, the parabolicity means that $\Delta$ is critical, with ground state $1$. By (i) in Theorem \ref{thm:main}, $0\in \sigma(\Delta)$. On the other hand, clearly $1$ does not have subexponential growth. Two different examples of such manifolds can be found in \cite{V89} and \cite{R92} (the second one even having constant curvature $=-1$).
\end{proof}

\medskip

Next, one has the following example showing that (i) in Theorem \ref{thm:main} does not always follow from (ii):

%%%%%%%%%%%%%%%%%%
\begin{example}\label{Ex:i-ii}

There is a complete Riemannian manifold and an admissible cut-off sequence $\{\varphi_n\}_{n\in\mathbb{N}}$ for $(\Delta,1)$, such that $0\in \sigma(\Delta)$, and for $u\equiv 1$, (i) with $\varphi\equiv 1$ in Theorem~\ref{thm:main} holds but not (ii). More precisely,

$$\liminf_{n\to\infty}\frac{\|u\|_{L^2(A_n,\dm)}}{\|\varphi_n u\|_{2,m}}>0.$$

\end{example}
%%%%%%%%%%%%%%%%%%

%%%%%%%%%%%%%%%%%%
\begin{proof}
We take $M=\R^2$, endowed with the Euclidean metric $g_{eucl}$, $\dm$ the Lebesgue measure, and $H=\Delta$. Clearly, $0$ belongs to the spectrum of $\Delta$. Moreover, it is well-known that $\R^2$ is parabolic: therefore, we take the admissible cut-off sequence $\{\varphi_n\}_{n\in\mathbb{N}}$ to be the usual null-sequence of $(\R^2,g_{eucl})$, constructed with the method of Section \ref{Ssec:Null} with the Evans potential given by $\log(|x|)$, $|x|\geq1$. In this construction, one takes $\varphi_n=\psi_{r_n,R_n}$ with $r_n=e^{2n}$ and $R_n=e^{2n+1}$, and one easily checks that the conditions (H1)--(H4) on $r_n$ and $R_n$ are satisfied. Clearly, $\|\varphi_n u\|_{2,m}\to \infty$, and $\mathbf{a}(\varphi_n,\varphi_n)=\int_{M}|\nabla \varphi_n|^2\dm\to 0$ since $\{\varphi_n\}_{n\in\mathbb{N}}$ is a null-sequence, so that (i) with $\varphi\equiv1$ of Theorem~\ref{thm:main} is satisfied. We now show that (ii) for the sequence $\{\varphi_n\}_{n\in\mathbb{N}}$ and $u\equiv 1$ is not satisfied. It is easily seen that $\mu(A_n)=\mu(\bar{A}_n)$ with $\bar{A}_n=\{z\in \R^2\,;\,e^{e^{2n}}\leq |z|\leq e^{e^{2n+1}}\}$. Hence,

$$\left(\frac{\|u\|_{L^2(A_n,\dm)}}{\|\varphi_n u\|_{2,m}}\right)^2
	\geq \frac{m(B(0,e^{e^{2n+1}}))-m(B(0,e^{e^{2n}}))}{m(B(0,e^{e^{2n+1}}))}
	=1-\frac{m(B(0,e^{e^{2n}}))}{m(B(0,e^{e^{2n+1}}))}
	\geq 1- \left(\frac{e^{e^{2n}}}{e^{e^{2n+1}}}\right)^2\,,
$$
which is easily seen to tend to $1$, as $n\to \infty$.
\end{proof}
%%%%%%%%%%%%%%%%%%

\medskip

%\begin{example}
%\label{Ex:Euclidean}
%Spectrum of $\Delta+V$ on $\R^n$, for $V$ small perturbation of $\Delta$; is it true that there are generalized eigenfunctions $u$ with

%$$u\sim e^{i\xi\cdot x}?$$
%\red{ask Yehuda about Agmon's notes}
%\end{example}

In the next example, we explain how our results allow one to recover the spectrum of the Laplacian on the hyperbolic space $\mathbb{H}^n$, despite it having exponential volume growth.

\begin{example}
\label{Ex:Hyperbolic}
Let $\mathbb{H}^n$ be the real hyperbolic space of dimension $n$ given by the Poincar\'e disk model, and $\Delta$ be the Laplace-Beltrami operator acting on functions on $\mathbb{H}^n$. We will denote by $r=d(0,\cdot)$ where the distance is the hyperbolic distance. Consider the operator

$$H=\Delta-\left(\frac{n-1}{2}\right)^2.$$
We are going to see that one can use (i) in Theorem \ref{thm:main}, in order to show that $\sigma(H)=[0,\infty)$, hence $\sigma(\Delta)=[\left(\frac{n-1}{2}\right)^2,\infty)$ (of course, this latter result is well-known, but our point is to show that Theorem \ref{thm:main} can be used also in situations where the volume growth is exponential). Let $\lambda>0$. According to \cite{GO05}, there exists a radial generalized eigenfunction $u_\lambda(r)$ for $H$ associated to the eigenvalue $\lambda$, moreover

$$|u_\lambda(r)|\lesssim e^{-\frac{n-1}{2}r}.$$
Let us now study the Green functions of the operator $H$. By using the symmetries of the hyperbolic space, it can be shown that the Green functions must be radial. In \cite[Proposition 7.1]{GO05}, it is also proved that radial, $H$-harmonic functions on $\mathbb{H}^n\setminus\{0\}$ are linear combination of two functions called $h_s(r)$ (which is globally defined on $\mathbb{H}^n$ and positive), and $g_s(r)$ (which is $H$-superharmonic on $\mathbb{H}^n$), $s=\frac{n-1}{2}$. This already implies that the operator $H$ is subcritical. Moreover, by \cite[Proposition 7.2 and (7.13)]{GO05},

$$g_s(r)\asymp e^{-\frac{n-1}{2}r},\quad r\to \infty,$$
and 

$$\lim_{r\to \infty}h_s(r)e^{\frac{n-1}{2}r}=+\infty.$$
It follows that $g_s(r)$ has minimal growth as an $H$-harmonic function, hence one obtains that the Green functions for $H$ are two-sided bounded by $e^{-\frac{n-1}{2}r}$ as $r\to \infty$. Let us now take $W\in C_0^\infty$, a potential such that $H+W$ is critical with ground state $\varphi$. Since $W$ is compactly supported, $\varphi$ must have minimal growth for $H$, hence

$$\varphi\asymp e^{-\frac{n-1}{2}r}.$$
It follows that

$$|u_\lambda|\lesssim \varphi.$$
We can thus apply Corollary \ref{Cor:BP} and get that $\lambda\in \sigma(H)$.

\end{example}

\begin{example} 

We present an example for which (i) in Theorem \ref{thm:main} is applicable but not \cite{BP17}. Let $m:\R\to [1,\infty)$ be smooth and such that $m(x)=|x|^3$ for $|x|\geq1$. Let $\Gw=\R$, endowed with the measure $\dm=m(x)dx$, and let $H$ be the operator associated with the quadratic form

$$Q(u)=\int_{\R}|u'(x)|^2\,dm(x).$$
Explicitly,
%%%%%%%%%%%%%%%%%%
$$
H=-\frac{1}{m(x)}\frac{d}{dx}\left(m(x)\frac{d}{dx}\cdot \right)=-\frac{d^2}{dx^2}-\frac{m'(x)}{m(x)}\frac{d}{dx}.
$$
For $|x|\geq 1$, $H=-\frac{d^2}{dx^2}-\frac{3}{x}\frac{d}{dx}$, which is the radial part of the Laplacian on $\R^4$ endowed with the Euclidean metric. The function $|x|^{-2}$ is $H$-harmonic and has minimal growth as $|x|\to \infty$, since the Green function of the Laplacian on $\R^4$ is (up to a multiplicative constant) equal to $|x|^{-2}$. Since $Q\geq Q_{\Delta}$, the subcriticality of $\Delta$ on $\R^4$ implies that $H$ is subcritical, and its Green functions $G(x,y)$ satisfy $G(x,y)\asymp |x|^{-2}$ as $|x|\to \infty$. Let $W\in C_0^\infty((-1,1))$ be such that $H+W$ is critical; then, the corresponding ground state $h$ satisfies $h(x)\asymp |x|^{-2}$ as $|x|\to \infty$. Let $\lambda\geq0$, and consider the following regular linear ODE:

$$\frac{d^2u}{dx^2}+\frac{m'(x)}{m(x)}\frac{du}{dx}+(\lambda+W(x)) u(x)=0.$$
This ODE admits a $2$-dimensional vector space of global solutions; let us pick $u_\lambda$ such a non-trivial globally defined solution. 
Since the support of $W$ is included in $(-1,1)$, $u_\lambda$ is a solution of the following Sturm-Liouville equation for $|x|\geq 1$:
%%%%%%%%%%%%%%%%%%
$$
u_\lambda''+\frac{3}{x} u_\lambda'+\lambda u_\lambda =0,\quad |x|\geq1.
$$
%%%%%%%%%%%%%%%%%%
 General solutions to this equation are given (see \cite[Page~117, Eq.~6.80]{Bowman58}) by
%%%%%%%%%%%%%%%%%%
\begin{align*}
y(x) = \frac{1}{x} \big( A J_{-1}(\sqrt{\lambda} x) + B Y_{-1}(\sqrt{\lambda} x) \big), \quad A,B\in \R,
\end{align*}
%%%%%%%%%%%%%%%%%%
where $J_n$ is the Bessel function and $Y_n$ is the Bessel function of the second kind (Neumann functions). According to \cite[Page~364, 9.2.1, 9.2.2]{AbramowitzIrene64}, the long time behavior of the Bessel functions for $|x|\to\infty$ is
%%%%%%%%%%%%%%%%%%
\begin{align*}
J_{-1}(\sqrt{\lambda} x) & = \sqrt{\frac{2}{\pi \sqrt{\lambda} |x|}}\, \left( 
		\cos\left(\sqrt{\lambda} |x|+\frac{\pi}{4}\right)
		+
		O\big( |x|^{-1} \big)
	\right),\\
Y_{-1}(\sqrt{\lambda} x) & = \sqrt{\frac{2}{\pi \sqrt{\lambda} |x|}}\, \left( 
		\sin\left(\sqrt{\lambda} |x|+\frac{\pi}{4}\right)
		+
		O\big( |x|^{-1} \big)
	\right).
\end{align*}
%%%%%%%%%%%%%%%%%%
Thus, as $x\to \pm \infty$,
%%%%%%%%%%%%%%%%%%
\begin{align*}
u_\lambda(x) = |x|^{-3/2}\left(A_\pm\cos\left(\sqrt{\lambda} |x|+\frac{\pi}{4}\right)+B_\pm \sin\left(\sqrt{\lambda} |x|+\frac{\pi}{4}\right) +O(|x|^{-1})\right),
\end{align*}
for some $A_\pm,B_\pm\in\R$. Since the ground state $h$ satisfies 

$$h(x)\asymp |x|^{-2},\quad |x|\to \infty,$$
there does not exists any constant $C>0$ such that

$$|u_\lambda| \leq C h.$$
Therefore, \cite{BP17} is not applicable. Next, we show that the criterion (i) in Theorem \ref{thm:main} is applicable, and give that $\lambda\in \sigma(H)$. First, we define

$$L=h^{-1}(H+W)h,$$
and we notice that $\varphi(x)=h^{-1}(x)=x^2$ is an Evans potential for $L$ (see Section \ref{Ssec:Null}). We construct $\{\varphi_n\}_{n\in\N}$, an admissible cut-off sequence for $(H+W,h)$; by Lemma \ref{Lem:AdSeq-crit}, this is equivalent to building $\{\frac{\varphi_n}{h}\}_{n\in\N}$ admissible cut-off sequence for $(L,1)$. We achieve this by following the method in Section \ref{Ssec:Null}; more precisely, we let

$$\frac{\varphi_n}{h}=\psi_{r_n,R_n}$$
and we choose the sequences $\{r_n\}_{n\in N}$ and $\{R_n\}_{n\in N}$ so that (H1)--(H4) hold. We will choose $1\leq r_n<R_n$; given the definition of $\psi_{r,R}$ in Section \ref{Ssec:Null}, we get $|\frac{d}{dx} \psi_{r,R}|=\frac{2|x|}{R-r}$, hence for $\dmu=h^2\dm$,
\begin{align*}
\int_{A(r_n,R_n)} \left\langle \frac{d}{dx} \psi_{r_n,R_n}, \frac{d}{dx} \psi_{r_n,R_n}\right\rangle \dmu
	= &\frac{1}{(R_n-r_n)^2} \int_{r_n<x^2<R_n} 4|x|^2\frac{1}{|x|^4} |x|^3 dx\\
	= &\frac{4}{(R_n-r_n)^2} \int_{r_n<x^2<R_n} |x| dx\\
	= &\frac{4}{R_n-r_n}
\end{align*}
Define $r_n:= e^{2n}$ and $R_n:=e^{2n+1}$. A straightforward computation shows that (H1)--(H4) are then satisfied, therefore $\{\frac{\varphi_n}{h}\}_{n\in\N}$ is an admissible cut-off sequence. As a consequence of the construction (see Section 5), we also get that $\{\frac{\varphi_n}{h}\}_{n\in \N}$ is a null sequence for $L$, which implies that

\begin{equation}\label{Eq:Null_ex}
\lim_{n\to \infty}\int_\Omega \left|\frac{d}{dx} \left(\frac{\varphi_n}{h}\right)\right|^2\,h^2\dm=0.
\end{equation}
By definition of $A_n:=\supp(\frac{\varphi_{n+1}}{h}(1-\frac{\varphi_{n-1}}{h}))$, we get 

$$A_n\subseteq \{r_{n-1}<x^2<R_{n+1}\}
	\subseteq \{e^{2n-2} <x^2<e^{2n+3}\}
	=  \{e^{n-1} <|x|<e^{n+3/2}\}
$$

Thus, for $x\in A_n$ and $n$ large, we get 

$$\left|\frac{u_\lambda(x)}{h(x)}\right| \lesssim \left|\frac{x^2}{x^{3/2}}\right| = \sqrt{x} \lesssim e^{\frac{n}{2}}
$$
Recall that for $|x|\geq1$,

$$u_\lambda=x^{-3/2}\left(A_\pm\cos\left(\sqrt{\lambda} x+\frac{\pi}{4}\right)+B_\pm \sin\left(\sqrt{\lambda} x+\frac{\pi}{4}\right)\right) +E(x)=v(x)+E(x),$$
with $E(x)=O(|x|^{-5/2})$; obviously, $||E||_{2,m}<\infty.$ By definition of an admissible cut-off sequence, we have

$$\|\frac{\varphi_n}{h}u_\lambda\|_{2,m} ^2\geq \int_{1\leq x^2\leq e^{2n}} |v(x)|^2 |x|^3 dx+C =\int_{1\leq x\leq e^{n}} |v(x)|^2 x^3 dx +\int_{-e^{n}\leq x\leq -1} |v(x)|^2 |x|^3 dx+C.
$$
We estimate from below the first integral (for positive $x$), the computations for the second one being similar. Writing

$$A_+\cos\left(\sqrt{\lambda} x+\frac{\pi}{4}\right)+B_+ \sin\left(\sqrt{\lambda} x+\frac{\pi}{4}\right)=\kappa \cos(\sqrt{\lambda}x+\theta),\quad x\geq1,$$
for some constants $\kappa$ and $\theta$, we obtain

$$v(x)=\kappa x^{-3/2} \cos(\sqrt{\lambda}x+\theta),\quad x\geq1.$$
Therefore,

$$\int_{1\leq x\leq e^{n}} |v(x)|^2 x^3 dx\geq \kappa^2\int_{1\leq x\leq e^n}\cos^2(\sqrt{\lambda}x+\theta)\,dx=\frac{\kappa^2}{\sqrt{\lambda}}\int_{\sqrt{\lambda}+\theta \leq y\leq \sqrt{\lambda}e^n+\theta }\cos^2(y)\,dy$$
Using that

$$\int_{c\leq y\leq r}\cos^2(y)\,dy \sim \frac{r}{2},\quad r\to \infty,$$
we get, as $n\to \infty$,

$$e^n \lesssim \int_{1\leq x\leq e^{n}} |v(x)|^2 x^3 dx.$$
Consequently, for $n$ large

$$e^{n/2}\lesssim \|\frac{\varphi_n}{h}u_\lambda\|_{2,m}.$$
Therefore,

$$\frac{\max_{A_n}\frac{|u_\lambda|}{h}}{\|\frac{\varphi_n}{h}u_\lambda\|_{2,m}}\lesssim 1.$$
Hence, taking \eqref{Eq:Null_ex} into account, we obtain

$$\lim_{n\to \infty}\frac{\max_{A_n}\frac{|u_\lambda|}{h}}{\|\frac{\varphi_n}{h}u_\lambda\|_{2,m}}\int_\Omega \left|\frac{d}{dx} \left(\frac{\varphi_n}{h}\right)\right|^2\,h^2\dm=0.$$
Thus, (i) with $\varphi=h$ in Theorem \ref{thm:main} is satisfied, and $\lambda\in \sigma(H)$.

\end{example}

%%%%%%%%%%%%%%%%%%
\begin{center}{\bf Acknowledgments} \end{center}
%%%%%%%%%%%%%%%%%%%%%%%%%%%%%%%%%%%%%%%%%%%%%%%
The authors are grateful to thank Yehuda Pinchover for useful remarks and fruitful discussions. S. Beckus acknowledges the support of the Israel Science Foundation (grants No. 970/15) founded by the Israel Academy of Sciences and Humanities.

%%%%%%%%%%%%%%%%%%%%%%%%%%%%%%%%%%%%%%%%%%%%%%%%%%%%%
%%%%%%%%%%           References            %%%%%%%%%%
%%%%%%%%%%%%%%%%%%%%%%%%%%%%%%%%%%%%%%%%%%%%%%%%%%%%%
%\bibliographystyle{amsalphasort}
%\bibliography{references}

\providecommand{\bysame}{\leavevmode\hbox to3em{\hrulefill}\thinspace}
\providecommand{\MR}{\relax\ifhmode\unskip\space\fi MR }
% \MRhref is called by the amsart/book/proc definition of \MR.
\providecommand{\MRhref}[2]{%
  \href{http://www.ams.org/mathscinet-getitem?mr=#1}{#2}
}
\providecommand{\href}[2]{#2}

\end{document}